\renewcommand{\Im}{\operatorname{Im}}
\renewcommand{\Re}{\operatorname{Re}}
\renewcommand{\Im}{\operatorname{Im}}
\renewcommand{\(}{\left\(}
\renewcommand{\)}{\right\)}
\renewcommand{\[}{\left\[}
\renewcommand{\]}{\right\]}
\numberwithin{equation}{section}
 \theoremstyle{plain}
\newtheorem{theorem}{Theorem}[section]
\newtheorem{lemma}[theorem]{Lemma}
\newtheorem{remark}[]{Remark}
\newtheorem{conjecture}[theorem]{Conjecture}
\newtheorem{corollary}[theorem]{Corollary}
\newtheorem{example}[]{Example}
\def\proof{\@ifnextchar[{\@oproof}{\@nproof}}
\def\@oproof[#1][#2]{\trivlist\item[\hskip\labelsep\textit{#2 Proof of\
#1.}~]\ignorespaces}
\def\@nproof{\trivlist\item[\hskip\labelsep\textit{Proof.}~]\ignorespaces}
\begin{document}
\title[Equivalent criteria for the Riemann hypothesis for a general class of $L$-functions]{Equivalent criteria for the Riemann hypothesis for a general class of $L$-functions}

\author{Meghali Garg}
\address{Meghali Garg \\ Department of Mathematics \\
Indian Institute of Technology Indore \\
Simrol,  Indore,  Madhya Pradesh 453552, India.} 
\email{meghaligarg.2216@gmail.com,   phd2001241005@iiti.ac.in}

 \author{Bibekananda Maji}
\address{Bibekananda Maji\\ Department of Mathematics \\
Indian Institute of Technology Indore \\
Simrol,  Indore,  Madhya Pradesh 453552, India.} 
\email{bibek10iitb@gmail.com,  bmaji@iiti.ac.in}

\thanks{2010 \textit{Mathematics Subject Classification.} Primary 11M06; Secondary 11M26.\\
\textit{Keywords and phrases.} Riemann zeta function,  $L$-functions,  Non-trivial zeros,  Riemann hypothesis,  Equivalent criteria. }

\maketitle

\begin{abstract}
In 1916,  Riesz gave an equivalent criterion for the Riemann hypothesis (RH).  Inspired from Riesz's criterion,  Hardy and Littlewood showed that RH is equivalent to the following bound: 
\begin{align*}
P_1(x):= \sum_{n=1}^\infty  \frac{\mu(n)}{n} \exp\left({-\frac{x}{n^2}}\right) = O_{\epsilon}\left( x^{-\frac{1}{4}+ \epsilon } \right), \quad \mathrm{as}\,\, x \rightarrow \infty.  
\end{align*}
Recently,  the authors extended the above bound for the generalized Riemann hypothesis for Dirichlet $L$-functions and gave a conjecture for a class of ``nice'' $L$-functions.  In this paper,  we settle this conjecture.  In particular,  we give equivalent criteria for the Riemann hypothesis for $L$-functions associated to cusp forms.   We also obtain an entirely novel form of equivalent criteria for the Riemann hypothesis of $\zeta(s)$.  
Furthermore,  we generalize an identity of Ramanujan,  Hardy and Littlewood for Chandrasekharan-Narasimhan class of $L$-functions.

\end{abstract}

\section{introduction}

The Riemann hypothesis, formulated by Bernhard Riemann in his seminal paper \cite{Rie59} in 1859, stands as one of the most renowned yet unsolved problems in the realm of mathematics. Until now, no counterexample has been discovered, and the hypothesis receives substantial numerical support. Nevertheless, despite the persistent endeavours of mathematicians over time, the Riemann hypothesis continues to elude proof. Over the course of time, mathematicians have put forth numerous equivalent criterion for the Riemann hypothesis in their attempts to establish its proof. A noteworthy equivalent criterion, attributed to Riesz \cite{Riesz} in 1916 has shown that the Riemann hypothesis is equivalent to the following bound for an infinite series associated with $\mu(n)$,
\begin{equation}\label{Riesz}
P_2(x):=  \sum_{n=1}^\infty \frac{\mu(n)}{n^2} \exp\left( - \frac{x}{n^2} \right) = O_{\epsilon} \left( x^{-\frac{3}{4} + \epsilon} \right), \quad {\rm as}\,\,  x \rightarrow \infty,
\end{equation}
for any positive $\epsilon$. Driven by this motivation, Hardy and Littlewood \cite[p.~161]{HL-1916} established another equivalent criterion for the Riemann hypothesis while rectifying an identity of Ramanujan.  Mainly,  they showed that
\begin{align}\label{Riesz type_Hardy_Littlewood}
P_1(x):= \sum_{n=1}^\infty  \frac{\mu(n)}{n} \exp\left({-\frac{x}{n^2}}\right) = O_{\epsilon}\left( x^{-\frac{1}{4}+ \epsilon } \right), \quad \mathrm{as}\,\, x \rightarrow \infty.  
\end{align}
Their intuition stemmed from the following identity found in Ramanujan's notebooks \cite[p.~312]{Rama_2nd_Notebook},  \cite[Equation (37.3), p.~470]{BCB-V}, where he mentioned that for any  $x>0$,
\begin{align}\label{Ramanujan_mu}
\sum_{n=1}^{\infty} \frac{\mu(n)}{n} \exp\left({-\frac{x}{n^2}}\right) = \sqrt{\frac{\pi}{x}} \sum_{n=1}^{\infty} \frac{\mu(n)}{n} \exp\left(- \frac{\pi^2}{n^2 x} \right).
\end{align}
Berndt \cite[p.~468-469]{BCB-V}  presented a compelling numerical explanation that sheds light on the discrepancy of the identity \eqref{Ramanujan_mu}.
 Hardy and Littlewood  \cite[p.~156, Section 2.5]{HL-1916}  established a corrected version of \eqref{Ramanujan_mu}.  They proved that,  for $x>0$,  
 \begin{align}\label{Hardy-Littlewood}
 \sum_{n=1}^{\infty} \frac{\mu(n)}{n} \exp\left({-\frac{x}{n^2}}\right) = \sqrt{\frac{\pi}{x}} \sum_{n=1}^{\infty} \frac{\mu(n)}{n} \exp\left(- \frac{\pi^2}{n^2 x} \right)- \frac{1}{2 \sqrt{\pi}} \sum_{\rho} \left(  \frac{\pi}{\sqrt{x}} \right)^{\rho}  \frac{ \Gamma\left(\frac{1-\rho}{2} \right) }{\zeta'(\rho)}.
 \end{align}
 A symmetric form of the above identity is the following.  For two positive real numbers $\alpha$,  $\beta$ with $\alpha \beta =\pi$,  we have
 \begin{align}\label{Hardy-Littlewood symmetric}
\sqrt{\alpha} \sum_{n=1}^{\infty} \frac{\mu(n)}{n} \exp\left({-\left(\frac{\alpha}{n}\right)^2}\right) - \sqrt{\beta} \sum_{n=1}^{\infty} \frac{\mu(n)}{n} \exp\left({-\left(\frac{\beta}{n}\right)^2}\right) = -\frac{1}{2\sqrt{\beta}} \sum_{\rho} \frac{ \Gamma\left(\frac{1-\rho}{2} \right) \beta^\rho}{\zeta'(\rho)},  
\end{align}
where the right-side sum over $\rho$ runs through the non-trivial zeros of the Riemann zeta function $\zeta(s)$.  The identity \eqref{Hardy-Littlewood} holds under the assumption that, all the non-trivial zeros of $\zeta(s)$ are simple. The convergence of the sum over $\rho$ on the right-hand side of \eqref{Hardy-Littlewood} is not immediately evident. While there is a belief that the series converges rapidly, to date, its convergence is only established under the assumption of bracketing the terms. That is, if for some positive constant $A_0$, the terms corresponding to the non-trivial zeros $\rho_1$ and $\rho_2$ satisfy 
\begin{align}\label{bracketing}
|\Im(\rho_1) - \Im(\rho_2)| < \exp \left( -\frac{A_0 \Im(\rho_1)}{\log(\Im(\rho_1) )} \right) + \exp \left( -\frac{A_0 \Im(\rho_2)}{\log(\Im(\rho_2) )} \right),
\end{align}
then they will be considered inside the same bracket. A more detailed explanation of the convergence intricacies concerning series containing $\zeta'(\rho)$ in the denominator has been provided in \cite[p.~783]{JMS21}.

The identity \eqref{Hardy-Littlewood} has captured the attention of numerous mathematicians over the years. For further insights into this identity, interested readers are encouraged to refer to Berndt \cite[p.~470]{BCB-V}, Paris and Kaminski \cite[p.~143]{PK}, and Titchmarch \cite[p.~219]{Tit}. Remarkably, Bhaskaran \cite{bhas} discovered a connection between Fourier reciprocity and Wiener's Tauberian theory. Ramanujan himself indicated a fascinating generalization of \eqref{Hardy-Littlewood} related to a pair of reciprocal functions, which was later explored by Hardy and Littlewood \cite[p.~160, Section 2.5]{HL-1916}.

In 2012, Dixit \cite{dixit12} established an elegant character analogue of \eqref{Hardy-Littlewood}. Subsequently, Dixit, Roy, and Zaharescu \cite{DRZ-character} investigated a one-variable generalization of \eqref{Hardy-Littlewood}, deriving equivalent criteria for both the Riemann hypothesis and the generalized Riemann hypothesis. They \cite{DRZ} also found an identity analogous to \eqref{Hardy-Littlewood} for $L$-functions associated with Hecke eigenforms. In $2022$, Dixit, Gupta, and Vatwani \cite{DGV21} obtained a generalization for the Dedekind zeta function and established an equivalent criterion for the extended Riemann hypothesis.  In 2023,  Banerjee and Kumar \cite{BK21} derived an analogous identity for $L$-functions associated to Maass cusp forms and discovered Riesz-Hardy-Littlewood type equivalent criterion for the corresponding $L$-function. Very recently, Gupta and Vatwani \cite{GV22} further examined this identity for $L$-functions in the Selberg class and obtained Riesz type equivalent criteria.  Interested people can also explore the work of Báez-Duarte \cite{Baez1}, where a sequential Riesz-type criterion for the Riemann hypothesis was presented. This phenomenon was further studied by Cislo and Wolf \cite{CW08}.

In a recent development, Agarwal and the authors \cite{AGM}  gave an interesting one-variable generalization of the identity of Hardy-Littlewood \eqref{Hardy-Littlewood}. The following identity has been established.
\begin{theorem}
Let $k \geq 1$ be a real number.  Under the hypothesis that the non-trivial zeros of $\zeta(s)$ are simple,  for $x>0$,  we have
\begin{align}\label{AGM}
    \sum_{n=1}^{\infty} \frac{\mu(n)}{n^k} \exp\left({-\frac{x}{n^2}}\right) =\frac{\Gamma(\frac{k}{2})}{x^\frac{k}{2}}\sum_{n=1}^{\infty}\frac{\mu(n)}{n}{}_1F_1 \left(\frac{k}{2}; \frac{1}{2}; - \frac{\pi^2}{n^2x} \right) + \frac{1}{2}\sum_{\rho}\frac{\Gamma(\frac{k-\rho}{2})}{\zeta'(\rho)}x^{-\frac{(k-\rho)}{2}},
\end{align}
where the sum over $\rho$ runs through all the non-trivial zeros of $\zeta(s)$ and satisfies the bracketing condition \eqref{bracketing} and the entire function ${}_1F_1(a;b;z)$ is defined by 
\begin{align*}
{}_1F_1(a;b;z):= \sum_{n=0}^\infty \frac{(a)_n}{(b)_n} \frac{z^n}{n!},
\quad
{where}~~ (a)_n:=\frac{\Gamma(a+n)}{\Gamma(a)}.
\end{align*}
\end{theorem}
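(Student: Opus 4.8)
The plan is to derive \eqref{AGM} from the functional equation of $\zeta(s)$ via Mellin inversion, treating the parameter $k$ as a deformation of the classical case $k=1$ that already appears in \eqref{Hardy-Littlewood}. First I would express the left-hand side as an inverse Mellin transform: since $\int_0^\infty e^{-x/n^2} x^{s-1}\,dx = n^{2s}\Gamma(s)$, one has for $c>0$
\begin{align*}
\sum_{n=1}^\infty \frac{\mu(n)}{n^k}\exp\left(-\frac{x}{n^2}\right) = \frac{1}{2\pi i}\int_{(c)} \frac{\Gamma(s)}{x^s}\, \frac{1}{\zeta(2s+k)}\, ds,
\end{align*}
valid once $c$ is large enough that $\Re(2s+k)>1$ guarantees absolute convergence and the interchange of sum and integral is justified (here one uses $\sum \mu(n)/n^{k}n^{-2s} = 1/\zeta(2s+k)$). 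The strategy is then to shift the contour to the left, picking up residues at the poles of the integrand.

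Next I would identify the poles encountered when moving the line of integration from $\Re(s)=c$ to $\Re(s)=-c'$ for suitable $c'$. The integrand $\Gamma(s)\,x^{-s}/\zeta(2s+k)$ has: a simple pole at $s=0$ from $\Gamma(s)$ (with $\zeta(k)\neq 0$ for $k\geq 1$, $k\neq 1$ needing the trivial-zero discussion, but at $k=1$ this is where the pole of $\zeta$ at $1$ interacts — I would handle $k>1$ and $k=1$ uniformly by noting $\Gamma(s)/\zeta(2s+k)$ is regular at $s=0$ precisely because $1/\zeta$ vanishes there when $k=1$, giving no residue, while for $k>1$ the residue is $1/\zeta(k)$ — this subtlety must be tracked); simple poles at $s=(\rho-k)/2$ for each non-trivial zero $\rho$ of $\zeta$, contributing $\tfrac12 \Gamma((\rho-k)/2)\, x^{-(\rho-k)/2}/\zeta'(\rho)$ (the factor $\tfrac12$ from $\frac{d}{ds}\zeta(2s+k) = 2\zeta'(2s+k)$), which matches the zero-sum in \eqref{AGM}; and the poles of $\Gamma(s)$ at negative integers, whose contributions I expect to reassemble — after applying the functional equation $\zeta(2s+k) = \chi(2s+k)\zeta(1-2s-k)$ with $\chi$ the usual factor — into the hypergeometric series on the right. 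Concretely, on the shifted line I substitute the functional equation, expand $1/\zeta(1-2s-k) = \sum \mu(n) n^{2s+k-1}$, and recognize the resulting Mellin–Barnes integral in $x$ as a ${}_1F_1$ by the standard integral representation ${}_1F_1(a;b;z) = \frac{\Gamma(b)}{2\pi i \,\Gamma(a)}\int \frac{\Gamma(a+t)\Gamma(-t)}{\Gamma(b+t)}(-z)^t\,dt$; the factor $\Gamma(k/2)/x^{k/2}$ and the argument $-\pi^2/(n^2 x)$ should fall out of the explicit form of $\chi$.

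The main obstacle I anticipate is twofold. First, justifying the contour shift rigorously: one must bound $1/\zeta(2s+k)$ on vertical lines near and inside the critical strip, which is exactly where the bracketing condition \eqref{bracketing} enters — the horizontal segments of the contour must be chosen to avoid clustering zeros, and the sum over $\rho$ only converges in the bracketed sense, so the limiting argument (sending the height of the contour to infinity through a sequence of admissible heights) needs the same care as in \cite{HL-1916} and \cite{DRZ}. Second, the bookkeeping that converts the Gamma-pole residues plus the shifted integral into the clean ${}_1F_1$ expression: one must verify that the "leftover" integral after the shift is precisely the Mellin–Barnes representation of the hypergeometric term and not merely asymptotically so, which requires matching the $\Gamma$-factors from $\chi(2s+k)$ (a ratio of Gammas via the reflection and duplication formulas) against the $\Gamma(k/2)$ normalization.

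Once these two points are settled, \eqref{AGM} follows by equating the original integral (line $\Re(s)=c$) with the sum of residues plus the shifted integral, and reading off the three groups of terms. I would present the argument first for $k>1$, where the pole structure at $s=0$ is cleanest, and then note that the $k=1$ case recovers \eqref{Hardy-Littlewood} by continuity in $k$ together with the vanishing of the $s=0$ residue, or alternatively by a direct limiting argument, since ${}_1F_1(1/2;1/2;z)=e^{z}$ collapses the hypergeometric sum back to the exponential in \eqref{Hardy-Littlewood}.
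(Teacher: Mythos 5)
The overall strategy (Mellin inversion plus contour shifting, with the zero-sum arising as residues at the non-trivial zeros and the hypergeometric term arising from the functional equation) is the right one and is essentially how the paper proceeds, via the general Theorem \ref{General_Identity} specialized to $A=\tfrac12$, $B=0$, $c=\tfrac{1}{\sqrt{\pi}}$, $\delta=1$, $\nu=1$. However, your setup contains a sign error that breaks the whole architecture. Since $\left(x/n^2\right)^{-s}=n^{2s}x^{-s}$, the Dirichlet series produced by interchanging sum and integral is $\sum_n \mu(n)\,n^{2s-k}=1/\zeta(k-2s)$, valid for $\Re(s)<\tfrac{k-1}{2}$, \emph{not} $1/\zeta(2s+k)$; the integral you wrote down, $\frac{1}{2\pi i}\int_{(c)}\Gamma(s)x^{-s}\zeta(2s+k)^{-1}\,ds$, actually represents $\sum_n \mu(n)n^{-k}e^{-xn^2}$. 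With the correct integrand $\Gamma(s)x^{-s}/\zeta(k-2s)$, the poles coming from the non-trivial zeros sit at $s=\tfrac{k-\rho}{2}$, whose real parts lie in $\left(\tfrac{k-1}{2},\tfrac{k}{2}\right)$, i.e.\ to the \emph{right} of the admissible starting line. So the contour must be shifted to the right (in the paper, to $\Re(s)=\lambda\in(\tfrac{k}{2},\tfrac{k}{2}+\epsilon)$), and a leftward shift as you propose would never capture the zero-sum at all. Relatedly, your claimed residues $\tfrac12\Gamma\big(\tfrac{\rho-k}{2}\big)x^{-(\rho-k)/2}/\zeta'(\rho)$ do not match the theorem's $\tfrac12\Gamma\big(\tfrac{k-\rho}{2}\big)x^{-(k-\rho)/2}/\zeta'(\rho)$; the agreement you assert is an artifact of the sign error.

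The second structural problem is your mechanism for the main term. The residues of $\Gamma(s)$ at $s=-m$ produce \emph{positive} powers $x^{m}$, whereas ${}_1F_1\big(\tfrac{k}{2};\tfrac12;-\tfrac{\pi^2}{n^2x}\big)$ is a power series in $x^{-1}$; these cannot reassemble into the right-hand side of \eqref{AGM}. In the paper the negative-integer poles of $\Gamma$ are never crossed: after shifting right past $\Re(s)=\tfrac{k}{2}$, one substitutes the functional equation \eqref{modified functional equation for phi and psi} into $1/\zeta(k-2s)$, expands $1/\zeta(2s+1-k)$ as a Dirichlet series (absolutely convergent there), and identifies the remaining Mellin--Barnes integral as a Meijer $G$-function $G^{1,1}_{1,2}$, which Slater's theorem converts into the ${}_1F_1$ with the stated $\Gamma(k/2)x^{-k/2}$ normalization (after peeling off the residue of $\Gamma(\tfrac{k}{2}-s)$ at $s=\tfrac{k}{2}$ to make the contour admissible for the $G$-function). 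Your instinct about where the bracketing condition and the horizontal-segment estimates enter is correct, but the proof as outlined would not produce \eqref{AGM}; it needs the sign fixed, the shift reversed, and the main term derived from the far-right line integral rather than from the poles of $\Gamma$.
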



Building upon Dixit's  \cite{dixit12} work, the authors \cite{GM-character} provided a character analogue of \eqref{AGM} and established the identity as follows:
\begin{theorem}
Let $k \geq 1$ and $\chi$ be a  primitive character modulo $q$.  Assume that all the non-trivial zeros of $L(s,\chi)$ are simple.  For $x >0$,  we have
{\allowdisplaybreaks
\begin{align}
      \sum_{n=1}^{\infty} \frac{\chi(n) \mu(n)}{n^k} \exp \left({-\frac{\pi x^2}{q n^2}}\right) & =  \frac{i^a \sqrt{q}}{G(\chi)} \left(\frac{q}{\pi x^2}\right)^{\frac{k+a}{2}} \left( \frac{\pi}{q} \right)^{a+\frac{1}{2}} \frac{\Gamma(\frac{k+a}{2})}{\Gamma(a+\frac{1}{2})} \nonumber \\
     & \times  \sum_{n=1}^{\infty} \frac{\overline{\chi(n)} \mu(n)}{n^{1+a}} {}_1F_{1} \left( \frac{k+a}{2}; a+\frac{1}{2}; - \frac{\pi}{qn^2 x^2} \right) \nonumber \\
     &+ \frac{1}{2}\sum_{ \rho } \frac{\Gamma(\frac{k-\rho}{2})}{ L'(\rho , \chi)} \left(\frac{q}{\pi x^2}\right)^{\frac{k-\rho}{2}},  \label{character_analogue}
\end{align}}
where the sum over $\rho$ runs through the non-trivial zeros of $L(s,\chi)$ and satisfy the bracketing conditions \eqref{bracketing}.
\end{theorem}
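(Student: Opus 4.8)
The plan is to imitate the proof of the identity \eqref{AGM}, the one genuinely new ingredient being the functional equation of $L(s,\chi)$. Write $a=0$ when $\chi$ is even and $a=1$ when $\chi$ is odd, and recall the completed $L$-function $\xi(s,\chi)=(q/\pi)^{(s+a)/2}\Gamma\!\big(\tfrac{s+a}{2}\big)L(s,\chi)$, which satisfies $\xi(s,\chi)=\tfrac{G(\chi)}{i^{a}\sqrt{q}}\,\xi(1-s,\overline{\chi})$. As a first step I would produce a Mellin representation: from $e^{-y}=\frac{1}{2\pi i}\int_{(c_{0})}\Gamma(s)y^{-s}\,ds$ (for any $c_{0}>0$), the identity $\sum_{n\ge 1}\chi(n)\mu(n)n^{-w}=1/L(w,\chi)$ for $\Re(w)>1$, and an interchange justified by absolute convergence, one gets for $0<c<(k-1)/2$
\begin{equation*}
\sum_{n=1}^{\infty}\frac{\chi(n)\mu(n)}{n^{k}}\exp\!\left(-\frac{\pi x^{2}}{qn^{2}}\right)=\frac{1}{2\pi i}\int_{(c)}\Gamma(s)\left(\frac{\pi x^{2}}{q}\right)^{-s}\frac{ds}{L(k-2s,\chi)},
\end{equation*}
the substitution $w=k-2s$ keeping the Dirichlet series inside its region of absolute convergence precisely because $\Re(s)<(k-1)/2$.

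Next I would move the line of integration to $\Re(s)=\lambda$, taking $\lambda\in(k/2,k/2+1)$ if $a=0$ and $\lambda\in(k/2,(k+1)/2)$ if $a=1$, so that $\Re(1-k+2\lambda)>1$ in either case. The horizontal pieces vanish once the heights $T$ are chosen to stay bounded away from the ordinates of the zeros of $L(s,\chi)$; this is exactly where the bracketing \eqref{bracketing} enters, and it is also the reason the resulting sum over $\rho$ converges. Between the two vertical lines the integrand has a simple pole at $s=(k-\rho)/2$ for each non-trivial zero $\rho$ of $L(s,\chi)$ --- here the simplicity hypothesis gives $\Res_{s=(k-\rho)/2}L(k-2s,\chi)^{-1}=-\tfrac{1}{2L'(\rho,\chi)}$, so the residue of the whole integrand there is $-\tfrac12\Gamma\!\big(\tfrac{k-\rho}{2}\big)L'(\rho,\chi)^{-1}\big(\tfrac{q}{\pi x^{2}}\big)^{(k-\rho)/2}$ --- and, only when $a=0$, one extra simple pole at $s=k/2$ coming from the trivial zero $L(0,\chi)=0$. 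Hence the shift produces the term $\tfrac12\sum_{\rho}\Gamma\!\big(\tfrac{k-\rho}{2}\big)L'(\rho,\chi)^{-1}\big(\tfrac{q}{\pi x^{2}}\big)^{(k-\rho)/2}$ exactly as in the statement, together with --- if $a=0$ --- an extra summand $\tfrac12\Gamma(k/2)L'(0,\chi)^{-1}\big(\tfrac{q}{\pi x^{2}}\big)^{k/2}$, which by the functional equation at $s=0$ (that is, $L'(0,\chi)=\tfrac12 G(\chi)L(1,\overline{\chi})$) turns out to be exactly the $j=0$ term of the series produced in the next step.

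On the shifted line $\Re(s)=\lambda>k/2$ one has $\Re(1-k+2s)>1$; there I would replace $L(k-2s,\chi)^{-1}$ by $\tfrac{i^{a}\sqrt{q}}{G(\chi)}(q/\pi)^{k-2s-1/2}\tfrac{\Gamma((k-2s+a)/2)}{\Gamma((1-k+2s+a)/2)}L(1-k+2s,\overline{\chi})^{-1}$ via the functional equation, and expand $L(1-k+2s,\overline{\chi})^{-1}=\sum_{n\ge 1}\overline{\chi}(n)\mu(n)n^{k-1-2s}$. Interchanging (Fubini is legitimate since, by Stirling's formula, the quotient of Gamma-factors decays exponentially in vertical strips) leaves, for each $n$, a Mellin--Barnes integral of the form $\frac{1}{2\pi i}\int_{(\lambda)}\frac{\Gamma(s)\,\Gamma(\tfrac{k+a}{2}-s)}{\Gamma(s+\tfrac{1-k+a}{2})}\,z_{n}^{-s}\,ds$ with $z_{n}=qn^{2}x^{2}/\pi$ large; closing it to the right and summing the residues at $s=\tfrac{k+a}{2}+j$ evaluates it to $\tfrac{\Gamma((k+a)/2)}{\Gamma(a+1/2)}z_{n}^{-(k+a)/2}\,{}_{1}F_{1}\!\big(\tfrac{k+a}{2};a+\tfrac12;-1/z_{n}\big)$. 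When $a=1$ the $j=0$ pole (at $s=(k+1)/2$) lies to the right of $\lambda$ and is included here; when $a=0$ it lies to the left and is instead furnished by the extra residue at $s=k/2$ from the previous step, so in both cases the series starts at $j=0$. Collecting the elementary factors --- the powers of $q$, $\pi$, $x$ and the Gauss sum collapse to exactly the constants shown --- gives \eqref{character_analogue}.

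The step I expect to be the main obstacle is the analytic side of the contour shift: one needs uniform estimates for $1/L(s,\chi)$ along suitably chosen horizontal segments, which is precisely why the statement cannot avoid the bracketing hypothesis \eqref{bracketing}, and the same delicacy governs the conditionally convergent way in which the term $\sum_{n}\overline{\chi}(n)\mu(n)n^{-1-a}=1/L(1+a,\overline{\chi})$ enters in the even case. By contrast, the hypergeometric evaluation and the final collapse of constants are routine, if laborious.
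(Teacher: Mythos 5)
Your proposal is correct and follows essentially the same route as the paper: here the theorem is obtained by specializing the general Chandrasekharan--Narasimhan identity (Theorem 3.1) with $A=\tfrac12$, $B=\tfrac a2$, $c=\sqrt{q/\pi}$, $\delta=1$, $\nu=G(\chi)/(i^a\sqrt q)$, and the proof of that general theorem is exactly your argument --- a Mellin representation of the exponential, a contour shift past $\Re(s)=\tfrac{k+a}{2}$ collecting the residues at $s=\tfrac{k-\rho}{2}$ and at the relevant trivial zero, the functional equation and Dirichlet-series expansion on the shifted line, and evaluation of the resulting Barnes integral (the paper phrases this step via the Meijer $G$-function and Slater's theorem, you by summing residues at $s=\tfrac{k+a}{2}+j$; these are the same computation). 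Your explicit bookkeeping of the $j=0$ term against the trivial-zero residue via $L'(0,\chi)=\tfrac12 G(\chi)L(1,\overline{\chi})$ is precisely the ``simplification'' the paper leaves implicit.

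One point needs repair: your opening representation places the line at $\Re(s)=c$ with $0<c<\tfrac{k-1}{2}$ so that $L(k-2s,\chi)^{-1}$ admits an absolutely convergent Dirichlet expansion, and this interval is empty at the endpoint $k=1$, which the theorem includes. The paper circumvents this by starting from $e^{-y}-1=\frac{1}{2\pi i}\int_{(d)}\Gamma(s)\,y^{-s}\,\mathrm{d}s$ with $d\in(-1,0)$, peeling off the constant term $1/L(k,\chi)$, which is then returned by the residue of $\Gamma(s)$ at $s=0$ when the contour is moved to the right; with that modification your argument covers all $k\ge 1$.
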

Inspired by the works of Riesz, Hardy and Littlewood, the authors \cite{GM-character} have established the following equivalent criteria for the generalized Riemann hypothesis for $L(s, \chi)$. For $k\geq 1$ and $\ell >0$,  we have
\begin{equation}\label{Two_variable_GM_bound_Final}
  \sum_{n=1}^{\infty} \frac{\chi(n) \mu(n)}{n^{k}} \exp \left(- \frac{ x}{n^{\ell}}\right) = O_{\epsilon,k,\ell} \left(x^{-\frac{k}{\ell}+\frac{1}{2 \ell} + \epsilon }\right), \quad \mathrm{as}\,\, x \rightarrow \infty, 
\end{equation}
for any $\epsilon > 0$. This bound extends the previously established bounds by Riesz \eqref{Riesz}, Hardy-Littlewood \eqref{Riesz type_Hardy_Littlewood}, as well as the earlier bound given by Agarwal and the authors \cite[Theorem 1.2]{AGM}. This bound, in turn, has motivated us to propose a broader conjecture \cite[Conjecture 5.1]{GM-character},  which we restate below for the clarity of the readers.  Let us consider
\begin{align}\label{def nyc L function}
L(f,s) = \sum_{n=1}^{\infty}\frac{A_f(n)}{n^s}, ~{\rm{for}}~ \Re(s) > 1,
\end{align}
be a ``nice" $L$-function.  Mainly,  we are assuming that $L(f,s)$ obeys the grand Riemann hypothesis \cite[p.~113]{IK},  which says that all the non-trivial zeros of $L(f,s)$ in the critical strip $0 <\Re(s)<1$ lie on the critical line $\Re(s)= \frac{1}{2}$.  Let us suppose that
\begin{align}\label{inverse of L(f,s)}
1/L(f,s) = \sum_{n=1}^\infty \frac{\mu_f(n)}{n^s},~ \text{for}~ \Re(s) > 1. 
\end{align}
Moreover,  we assume that $\sum_{n=1}^\infty \mu_f(n) n^{-1}$ is convergent.  Littlewood's prediction says that the grand Riemann hypothesis \cite[Proposition 5.14]{IK} is equivalent to the following bound: 
\begin{align}\label{merten bound general}
\sum_{  n \leq x} \mu_f(n) \ll_{\epsilon, f} x^{\frac{1}{2} + \epsilon},  
\end{align}
for any $\epsilon>0$.   
The above bound played a significant role to prove \eqref{Two_variable_GM_bound_Final}.  
Inspired from this observation,  the authors gave the following conjecture. 
\begin{conjecture}\label{conjecture}
Let $k \geq 1$ and $\ell>0$ be two real numbers.  Let $L(f,s)$ be a ``nice'' $L$-function satisfying $\eqref{merten bound general}$. Then the grand Riemann hypothesis for $L(f,s)$ is equivalent to the bound
\begin{align}\label{conjecture main bound}
P_{k,f,\ell} :=   \sum_{n=1}^{\infty} \frac{\mu_f(n)}{n^{k}} \exp \left(- \frac{ x}{n^{\ell}}\right) = O_{\epsilon, k, \ell} \left(x^{-\frac{k}{\ell}+\frac{1}{2 \ell} + \epsilon }\right), \quad \mathrm{as}\,\, x \rightarrow \infty. 
\end{align}
\end{conjecture}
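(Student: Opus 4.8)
The plan is to read off the equivalence from the single Mellin-transform identity
\begin{align*}
\frac{\Gamma(s)}{L(f,k-\ell s)}=\int_0^\infty P_{k,f,\ell}(x)\,x^{s-1}\,dx,
\end{align*}
which follows from term-by-term integration using $\int_0^\infty e^{-x/n^\ell}x^{s-1}\,dx=\Gamma(s)n^{\ell s}$ and \eqref{inverse of L(f,s)}, and is valid on any vertical line $\Re(s)=c$ sitting in the half-plane of convergence of $\sum_{n\ge1}\mu_f(n)n^{-(k-\ell s)}$. For $k>1$ that half-plane contains $0<\Re(s)<(k-1)/\ell$ unconditionally, so there is a genuine working strip with $\Re(s)>0$; for $k=1$ the series is only conditionally convergent and its abscissa cannot be pushed below $\Re(w)=1$ without extra input, which is exactly where \eqref{merten bound general} enters. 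The two implications are complementary readings of this identity: the grand Riemann hypothesis for $L(f,s)$ governs the analytic region of the left-hand side, while the bound \eqref{conjecture main bound} governs the convergence region of the right-hand side.

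\emph{Grand RH $\Rightarrow$ \eqref{conjecture main bound}.} Assuming first $k>1$, Mellin inversion gives $P_{k,f,\ell}(x)=\frac{1}{2\pi i}\int_{(c)}\Gamma(s)L(f,k-\ell s)^{-1}x^{-s}\,ds$ for $0<c<(k-1)/\ell$. I would then move the contour to the right, to $\Re(s)=\frac{k}{\ell}-\frac{1}{2\ell}-\delta$ for arbitrarily small $\delta>0$. Under the grand RH, $L(f,w)\ne0$ for $\Re(w)>\tfrac12$, so no pole is crossed: the poles coming from the non-trivial zeros $\rho$ all lie on $\Re(s)=\frac{k-\Re\rho}{\ell}=\frac{k}{\ell}-\frac{1}{2\ell}$, those from trivial zeros lie in $\Re(s)\ge k/\ell$, a pole of $L(f,s)$ (if any) produces only a zero of the integrand, and the poles of $\Gamma$ lie in $\Re(s)\le0$. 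On the shifted line $|x^{-s}|=x^{-k/\ell+1/(2\ell)+\delta}$, and the integral converges absolutely since $\Gamma$ decays exponentially in vertical strips while, under the grand RH, $L(f,w)^{-1}\ll_{\delta,f,\epsilon}(1+|\Im w|)^{\epsilon}$ on $\Re(w)\ge\tfrac12+\ell\delta$; this yields \eqref{conjecture main bound}. Moving the contour further, past the non-trivial zeros, and invoking the functional equation of $L(f,s)$ instead produces the Hardy--Littlewood-type identity for $L(f,s)$ promised in the abstract (the analogue of \eqref{AGM}--\eqref{Hardy-Littlewood}), in which $P_{k,f,\ell}(x)$ is a rapidly decaying dual series plus $\frac{1}{\ell}\sum_\rho\Gamma\!\left(\tfrac{k-\rho}{\ell}\right)x^{-(k-\rho)/\ell}L'(f,\rho)^{-1}$; under the grand RH each term of this sum has size $x^{-k/\ell+1/(2\ell)}$ times $|\Gamma(\tfrac{k-\rho}{\ell})L'(f,\rho)^{-1}|$, and the exponential decay of $\Gamma$ in $|\Im\rho|$ together with the bracketing \eqref{bracketing} again gives the bound. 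For $k=1$ the same scheme works once \eqref{merten bound general}, which holds under the grand RH, is used to obtain convergence of the Dirichlet series on a line $\Re(s)=c$ with $0<c<\frac{1}{2\ell}$.

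\emph{\eqref{conjecture main bound} $\Rightarrow$ grand RH.} For $k>1$, note that $P_{k,f,\ell}(x)\to1/L(f,k)$ as $x\to0^+$, so $\int_0^1 P_{k,f,\ell}(x)x^{s-1}\,dx$ is analytic for $\Re(s)>0$, while \eqref{conjecture main bound} makes $\int_1^\infty P_{k,f,\ell}(x)x^{s-1}\,dx$ analytic for $\Re(s)<\frac{k}{\ell}-\frac{1}{2\ell}$. Hence the Mellin transform of $P_{k,f,\ell}$ is analytic on $0<\Re(s)<\frac{k}{\ell}-\frac{1}{2\ell}$ and, agreeing on the strip $0<\Re(s)<(k-1)/\ell$ of absolute convergence with $\Gamma(s)L(f,k-\ell s)^{-1}$, it continues that function analytically over the whole strip $0<\Re(s)<\frac{k}{\ell}-\frac{1}{2\ell}$. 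Since $\Gamma(s)$ is analytic and non-vanishing for $\Re(s)>0$, this forces $L(f,w)\ne0$ for $\tfrac12<\Re(w)<k$, and combined with the classical non-vanishing of $L(f,w)$ for $\Re(w)\ge1$ we get $L(f,w)\ne0$ for $\Re(w)>\tfrac12$, i.e.\ the grand RH. For $k=1$ the strips of absolute convergence and of the Mellin transform no longer overlap, so instead I would run the argument through the Hardy--Littlewood-type identity: \eqref{conjecture main bound} forces the sum over $\rho$ to be $O_\epsilon(x^{-1/(2\ell)+\epsilon})$, and a Landau-type argument on the rightmost zero (ruling out cancellation among zeros with the same real part) shows this is impossible if some $\rho$ has $\Re\rho>\tfrac12$.

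The equivalence logic above is a clean adaptation of the classical Riesz and Hardy--Littlewood arguments; the substantive work is technical. First, establishing the Mellin and Hardy--Littlewood identities uniformly over the full Chandrasekharan--Narasimhan (``nice'') class requires the functional equation with its possibly several gamma factors, convexity-type growth bounds for $L(f,s)$ in vertical strips, and careful justification of the contour shifts and of the resulting Mellin--Barnes integrals, which specialise to the ${}_1F_1$-type dual series seen in \eqref{AGM} and its character analogue. Second, the sum over the non-trivial zeros converges only conditionally, so one must work throughout with the bracketing \eqref{bracketing} and needs input on the vertical distribution of the $\rho$ together with, under the grand RH, lower bounds for $|L'(f,\rho)|$. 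Third --- and I expect this to be the genuine difficulty --- the case $k=1$: there the Dirichlet series $\sum_n\mu_f(n)n^{-(k-\ell s)}$ is only conditionally convergent and its abscissa cannot be pushed below $\Re(w)=1$ unconditionally, so both directions rest on \eqref{merten bound general}; since \eqref{merten bound general} is itself the Littlewood equivalent of the grand RH for the $L$-functions in question, one must be careful to keep this input consistent with what is being proved, and separately to control $P_{1,f,\ell}(x)$ as $x\to0^+$ and $x\to\infty$.
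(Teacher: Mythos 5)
Your converse argument for $k>1$ is exactly the paper's: the paper's Lemma \ref{main lemma for converse part} produces the identity $L(f,\ell s+k)\int_0^\infty x^{-s-1}P_{k,f,\ell}(x)\,\mathrm{d}x=\Gamma(-s)$, the assumed bound extends the analyticity of the integral into $\frac{1}{2\ell}-\frac{k}{\ell}<\Re(s)<\frac{1-k}{\ell}$, and non-vanishing of $\Gamma$ plus the functional equation gives GRH. Your forward direction, however, is a genuinely different route: you shift the Mellin--Barnes contour to $\Re(s)=\frac{k}{\ell}-\frac{1}{2\ell}-\delta$ and invoke the GRH-conditional estimate $1/L(f,w)\ll(1+|\Im w|)^{\epsilon}$ to the right of the critical line, whereas the paper works entirely on the arithmetic side: it takes the Littlewood bound \eqref{merten bound general} (a stated hypothesis of the conjecture, asserted to follow from GRH for this class), applies Abel summation to get $\sum_{i=m}^{n}\mu_f(i)i^{-k}=O(m^{1/2-k+\epsilon})$ uniformly in $n$, splits the series at $m\sim x^{1-\epsilon}$, and bounds the two pieces elementarily. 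Both work, but the paper's version is self-contained relative to its hypotheses, while yours imports a further standard-but-unproved consequence of GRH ($1/L\ll t^{\epsilon}$ in the half-plane $\Re(w)\geq\frac12+\delta$) that is not among the assumptions placed on a ``nice'' $L$-function; for an arbitrary member of this loosely defined class that estimate would itself need justification.

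The genuine gap is the converse for $k=1$, which is the Hardy--Littlewood case and hence the most important one. You correctly observe that the strips $\Re(s)>0$ (convergence of the Mellin integral at $x=0$) and $\Re(k-\ell s)>1$ (termwise validity of $\Gamma(s)/L(f,k-\ell s)$) do not overlap when $k=1$, but your proposed substitute --- extracting GRH from the zero sum in a Hardy--Littlewood-type identity by ``a Landau-type argument on the rightmost zero'' --- is not a proof: that sum converges only under bracketing, its terms carry the uncontrolled factors $\Gamma\!\left(\frac{k-\rho}{\ell}\right)/L'(f,\rho)$, and ruling out cancellation among hypothetical off-line zeros is precisely the kind of step that cannot be waved through. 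The idea you are missing is the paper's Lemma \ref{main lemma for converse part}: rather than integrating $P_{k,f,\ell}$ termwise, one multiplies by $L(f,k+\ell s)$, expands it as a Dirichlet series, rescales, and uses the convolution identity $A_f*\mu_f=I$ to collapse
\begin{align*}
\sum_{n=1}^{\infty}\frac{A_f(n)}{n^{k}}\,P_{k,f,\ell}\!\left(\frac{x}{n^{\ell}}\right)=e^{-x}\quad\text{or}\quad e^{-x}-1,
\end{align*}
whose Mellin transform converges on a nonempty strip even for $k=1$; analytic continuation then yields the identity on all of $\Re(s)>\frac{1-k}{\ell}$, and the $k>1$ argument goes through verbatim. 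Without this (or an equivalent device) your proof does not cover $k=1$.
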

In this manuscript,  one of our main goals is to prove the above conjecture.  Moreover,  we also establish  a generalization of the Ramanujan-Hardy-Littlewood identity \eqref{Hardy-Littlewood} within the framework of Chandrasekharan-Narasimhan class of $L$-functions \cite{chand},  defined in the next section.   Furthermore,  we obtain an entirely novel form of equivalent criteria for the Riemann hypothesis of $\zeta(s)$,  see Theorem \ref{Equivalent criteria for RH by sigma}. 

\section{Chandrasekharan-Narasimhan class of $L$-functions}

We now introduce the class of $L$-functions established by Chandrasekharan and Narasimhan \cite{chand}.
Consider two arithmetical functions, denoted as $a_1(n)$ and $b_1(n)$, which are not identically zero. Additionally, let ${\lambda_n}$ and ${\mu_n}$ be two monotonically increasing sequences of positive real numbers tending to infinity. Then the associated Dirichlet series are defined as follows:
\begin{align}
\phi(s) = \sum_{n = 1}^{\infty}\frac{a_1(n)}{\lambda_{n}^s} \quad {\rm{for}}\quad \Re(s) = \sigma > \sigma_a,  \label{CL Dirichlet series1} \\
\psi(s) = \sum_{n = 1}^{\infty}\frac{b_1(n)}{\mu_{n}^s} \quad {\rm{for}}\quad \Re(s) = \sigma > \sigma_b. \label{CL Dirichlet series2}
\end{align}
Here, $\sigma_a$ and $\sigma_b$ indicate the finite abscissae of absolute convergence for $\phi(s)$ and $\psi(s)$, respectively. It is assumed that both $\phi(s)$ and $\psi(s)$ can be analytically continued to the entire $\mathbb{C}$ except for a finite number of poles.  For a given $\delta > 0$,  we assume that $\phi(s)$ and $\psi(s)$ are connected by the subsequent functional equation:
\begin{align}\label{functional equation for phi and psi}
(2\pi)^{-s} \Gamma(s) \phi(s) = (2\pi)^{-(\delta -s)} \Gamma(\delta -s) \psi(\delta - s).
\end{align}
In this paper, we adopt their framework in a slightly general form stated below.  Essentially, we consider a definition that involves a certain twist.  We define
\begin{align*}
a(n)=\begin{cases}
a_1(k),   &\textit{if} \quad  n = \lambda_k,\\
0 ,  & otherwise,
\end{cases}
\quad \text{and} \quad
b(n)=\begin{cases}
b_1(k),   &\textit{if} \quad n = \mu_k,  \\
0,   & otherwise.
\end{cases}
\end{align*}
Thus,  the  Dirichlet series in \eqref{CL Dirichlet series1} and \eqref{CL Dirichlet series2} can be rewritten as follows: 
\begin{align*}
\phi(s) = \sum_{n = 1}^{\infty}\frac{a(n)}{n^s}, \quad \text{for}\quad \Re(s) = \sigma > \sigma_a, \\
\psi(s) = \sum_{n = 1}^{\infty}\frac{b(n)}{n^s}, \quad \text{for}\quad \Re(s) = \sigma > \sigma_b.
\end{align*}
Now,  for a specified $\delta > 0$, we assume that $\phi(s)$ and $\psi(s)$ satisfy the following general functional equation:
\begin{align}\label{modified functional equation for phi and psi}
c^s \Gamma(As + B) \phi(s) = \nu c^{\delta -s} \Gamma(A(\delta -s) + B) \psi(\delta - s),
\end{align}
where $|\nu| = 1$, $A>0$, $ B \geq 0$ and $c \in \mathbb{R}^{+}$.
One can easily see that \eqref{functional equation for phi and psi} is a particular case of \eqref{modified functional equation for phi and psi} by letting $c = \frac{1}{2\pi}, A=1, B=0$ and $\nu = 1.$

We can observe that $\zeta(s)$ and $L(s,\chi)$ trivially fall under the category of the above class of $L$-functions.
A few more examples that belong to the Chandrasekharan-Narasimhan class of $L$-functions are given below. 
\begin{example}\label{def zeta(s) zeta(s-r)}
Let $r \in \mathbb{Z}$ and $\sigma_r(n)$ denote the sum of $r^{th}$ powers of positive divisors of $n$. Then the $L$-function associated to $\sigma_r(n)$ is given by
\begin{align*}
\ \sum_{n=1}^{\infty}\frac{\sigma_r(n)}{n^s} = \zeta(s) \zeta(s-r),   \quad {\rm{for}}\quad \Re(s) = \sigma > \max\{1, r+1 \},
\end{align*} 
and if $r$ is an odd positive integer, it satisfies the following functional equation:
\begin{align}\label{fn equ zeta(s) zeta(s-r)}
(2\pi)^{-s} \Gamma(s) \zeta(s) \zeta(s-r)=  (-1)^{\frac{r+1}{2}}(2\pi)^{-(r+1-s)} \Gamma(r+1-s) \zeta(r+1-s) \zeta(1-s).
\end{align}
This corresponds to \eqref{modified functional equation for phi and psi} for $a(n) = b(n) = \sigma_r(n),  c = \frac{1}{2\pi}, A=1, B=0, \delta = r+1$ and $\nu = (-1)^{\frac{r+1}{2}}.$ One can refer \cite[Example 2]{chand} for further insight into the functional equation \eqref{fn equ zeta(s) zeta(s-r)}.
\end{example}
An intriguing observation about Example \ref{def zeta(s) zeta(s-r)} is that, despite the function $\zeta(s) \zeta(s-r)$ having non-trivial zeros on the lines $\Re(s) = \frac{1}{2}$ and $\Re(s) = r+\frac{1}{2}$, and thus not adhering its own Riemann hypothesis but it offers surprising insights into the Riemann hypothesis for $\zeta(s)$ by presenting an entirely novel form of equivalent criteria, as demonstrated in Theorem \ref{Equivalent criteria for RH by sigma}. The next example corresponds to the cusp form of weight $12$.
\begin{example}\label{def tau }
 The Ramanujan tau function $\tau(n)$ is defined by
\begin{align*}
\Delta(q)=\sum_{n=1}^{\infty} \tau(n) q^n = q\prod_{n=1}^{\infty}(1-q^n)^{24}, \quad |q|<1. 
\end{align*}
Note that $\Delta(q)$ is a cusp form of level $1$ and weight $12$. The $L$-function associated to normalized coefficients $\tau_0(n) = \tau(n) n^{-\frac{11}{2}}$ is given by
\begin{align*}
L(\Delta,s) =  \sum_{n=1}^{\infty}\frac{\tau_0(n)}{n^s}  \quad {\rm{for}}\quad \Re(s) = \sigma > 1,
\end{align*}
and can be analytically continued to the whole complex plane as an entire function. Moreover, $L(\Delta ,s)$ satisfies the following nice-looking functional equation:
\begin{align*}
(2\pi)^{-s} \Gamma\left(s + \frac{11}{2} \right)L(\Delta,s) = (2\pi)^{-(1-s)} \Gamma\left(\frac{13}{2}-s \right)L(\Delta,1-s),
\end{align*}
which aligns with \eqref{modified functional equation for phi and psi} for  $a(n) = b(n) = \tau_0(n),  c = \frac{1}{2\pi}, A=1, B=\frac{11}{2}$, $ \delta = 1$ and $\nu = 1$. It has been conjectured that all the non-trivial zeros of $L(\Delta,s)$ lie on the critical line $\Re(s)=\frac{1}{2}$.
\end{example}
More general setting for $L$-functions associated to cusp forms are stated below.  
\begin{example}\label{Def cusp}
 Let $f(z)$ be a holomorphic Hecke eigenform of weight $\omega$ for the full modular group $SL(2,\mathbb{Z})$. The Fourier expansion of $f(z)$ at $i \infty$ is given by
\begin{align*}
f(z) = \sum_{n=1}^{\infty} \lambda_{f}(n)n^{\frac{\omega -1}{2}} e^{2\pi i n z},
\end{align*}
and $\lambda_{f}(n)$ is the normalized $n$th Fourier coefficient. Then the $L$-function associated to $f(z)$ is defined by 
\begin{align}\label{cusp form}
L(f,s) = \sum_{n=1}^{\infty} \frac{\lambda_{f}(n)}{n^s} = \prod_{p:~\rm{prime}} \left(1-\frac{\lambda_{f}(p)}{p^s} + \frac{1}{p^{2s}} \right)^{-1} \quad {\rm{for}}\quad \Re(s) > 1.
\end{align}
Hecke proved that $L(f,s)$ satisfies the functional equation 
\begin{align}\label{fn eq cusp form}
\Lambda (f,s) =  \Lambda (f,1-s),
\end{align}
where $\Lambda (f,  s):= (2\pi)^{-s} \Gamma\big(s+\frac{\omega -1}{2}\big) L(f,s)$. The $L$-function $L(f,s)$ admits analytic continuation to the whole complex plane.  Here, \eqref{fn eq cusp form} synchronizes with \eqref{modified functional equation for phi and psi} for  $a(n) = b(n) = \lambda_f(n),  c = \frac{1}{2\pi}, A=1, B=\frac{\omega -1}{2}$, $ \delta = 1$ and $\nu =1$. The Riemann hypothesis for $L(f,s)$ states that all the non-trivial zeros of $L(f,s)$ are situated on the critical line $\Re(s)=\frac{1}{2}$.
\end{example}
\begin{example}\label{r_b(n)}
Let $r_{j}(n)$ counts the number of ways a positive integer $n$ can be written as sum of $j$ squares. Then the generating function 
\begin{align*}
\zeta_{id}(j,s) := \sum_{n=1}^{\infty} \frac{r_{j}(n)}{n^s}  \quad {\rm{for}}\quad \Re(s) > \frac{j}{2}, 
\end{align*}
is the Epstein zeta function and satisfies the functional equation
\begin{align*}
\pi^{-s} \Gamma(s) \zeta_{id}(j,s) = \pi^{s-\frac{j}{2}} \Gamma\left(\frac{j}{2} - s\right) \zeta_{id}\left(j,\frac{j}{2} - s\right),
\end{align*}
which is due to Epstein \cite{Epstein}.   It can be analytically continued to the whole complex plane except for a simple pole at $s = \frac{j}{2}$.
This corresponds to \eqref{modified functional equation for phi and psi} for $a(n) = b(n) = r_{j}(n),  c = \frac{1}{\pi}, A=1, B=0, \delta = \frac{j}{2}$ and $\nu = 1.$ 
When $j=2$, it has been conjectured that all the non-trivial zeros of $\zeta_{id}(2,s)$ lies on the critical line $\Re(s) = \frac{1}{2}.$
\end{example}
Exploring whether a given subset $\mathcal{A}$ of natural numbers $\mathbb{N}$ can be used to express any natural number $n$ as a sum of elements from $\mathcal{A}$, and if so, quantifying the various ways this can be achieved, is always a compelling topic of study. In the context of Example \ref{r_b(n)}, the subset $\mathcal{A}$ is defined as the set of all perfect squares. 
Typically, Epstein zeta functions are investigated for positive definite $j\times j$ matrix $M$ and are defined as
\begin{align*}
\zeta_{M}(j,s) = \sum_{\kappa \in \mathbb{Z}^j/ \{0\}} (\kappa^T M \kappa)^{-s} \quad {\rm{for}} \quad \Re(s) > \frac{j}{2}.
\end{align*}
Example \ref{r_b(n)} deals with $M$ being identity matrix. For $j = 2,4,6$ and $8$, the Epstein zeta function can be expressed in terms of Riemann zeta function and Dirichlet $L$-functions.  An elegant formula for $\zeta_{id}(2,s)$ can be found in \cite[(1.1)]{Zucker}, which states that
\begin{align}\label{Epstein in terms of zeta}
\zeta_{id}(2,s) = 4 \zeta(s) \beta(s),
\end{align}
where $\beta(s) = \sum_{k=1}^{\infty}
\frac{(-1)^{k-1}}{(2k-1)^s}$, 
which is noting but $L(s,  \chi_{4,2})$. 
The generalized Riemann hypothesis for $L(s,  \chi_{4,2})$ says that all the non-trivial zeros will lie on $\Re(s)=1/2$.  Thus,  
all the non-trivial zeros of $\zeta_{id}(2,s)$ will also lie on $\Re(s) = \frac{1}{2}$. However,  it has been shown that Epstein zeta functions $\zeta_{id}(j,s)$, for $j \geq 3$, do not satisfy the Riemann hypothesis.  A detailed explanation for $j=4$ can be found in \cite[(1.4)-(1.6)]{Trvan} and numerical evidences from \cite[Section 5]{Trvan} ensures the failure of Epstein Riemann hypothesis for higher dimensions.\\
The next example is a particular case of the well known Dedekind zeta function for the imaginary quadratic field $\mathbb{Q}(\sqrt{-D})$.
\begin{example}\label{example dedekind}
Let $K = \mathbb{Q}(\sqrt{-D})$ be an imaginary quadratic field with discriminant $D$.  Then the Dedekind zeta function is defined as 
\begin{align}\label{def dedekind}
\zeta_K(s) := \sum_{n=1}^{\infty} \frac{\mathsf{a}_n}{n^s} \quad {\rm{for}} \quad \Re(s) >1,
\end{align}
where $\mathsf{a}_n$ denotes the number of integral ideals of norm $n$ of an imaginary quadratic field $\mathbb{Q}(\sqrt{-D})$.  It satisfies the following functional equation 
\begin{align}\label{fn eq dedekind}
\left(\frac{2\pi}{\sqrt{D}}\right)^{-s} \Gamma(s) \zeta_K(s) = \left(\frac{2\pi}{\sqrt{D}}\right)^{-(1-s)} \Gamma(1-s) \zeta_K(1-s),
\end{align}
due to \cite[p.~266]{Overholt}. This fits in \eqref{modified functional equation for phi and psi} for $c= \frac{\sqrt{D}}{2\pi}, A=1, B=0, \delta = 1$ and $\nu = 1$.
The extended Riemann hypothesis asserts that all the non-trivial zeros of $\zeta_K(s)$ lie on $\Re(s) = \frac{1}{2}$.
\end{example}
Before stating our main theorems,  we need to define the Dirichlet inverse of an arithmetical function $a(n)$.  Let $a(n)$ be an arithmetical function with $a(1) \neq 0$, then there exist a unique arithmetical function $a^{-1}(n)$, known as Dirichlet inverse of $a(n)$, such that
\begin{align*}
a*a^{-1} = a^{-1}*a = I.
\end{align*} 
Furthermore, $a^{-1}(n)$ is recursively defined as
\begin{align*}
a^{-1}(1) = \frac{1}{a(1)} ,\hspace{5mm} a^{-1}(n) = \frac{-1}{a(1)} \sum_{\substack{d|n \\ d<n}} a\left(\frac{n}{d}\right)a^{-1}(d) \quad {\rm{for}} \quad n>1.
\end{align*}
A proof of this standard result can be found in \cite[p.~30]{apostal}. 
Now we are ready to state our main results.

\section{Main Results}
In this section,  we state the main results of this paper.  

\begin{theorem}\label{General_Identity}
Let $a(n)$ and $b(n)$ be two arithmetical functions whose corresponding $L$-functions are $\phi(s)$ and $\psi(s)$ and satisfy the functional equation \eqref{modified functional equation for phi and psi}. Assume that all the non-trivial zeros of $\phi(s)$ are simple. Then for any $k \geq \delta + \frac{B}{A}$ and  $x>0$, we have
\begin{align}\label{main identity}
& \sum_{n=1}^{\infty} \frac{a^{-1}(n)}{n^k} \exp \left({-\frac{ x}{n^{1/A}}}\right) \nonumber \\
& = \frac{c^{-\frac{A\delta+2B}{A}}}{\nu x^{Ak+B}} \frac{\Gamma(Ak+B)}{\Gamma(A\delta +2B)}  \sum_{n=1}^{\infty}\frac{b^{-1}(n)}{n^{\frac{A\delta + B}{A}}} \bigg[  {}_1F_{1} \left( \begin{matrix}
Ak+B \\ A\delta+2B  \end{matrix}  \Big| -\frac{1}{(c^2 n)^{1/A}x} \right) - 1 \bigg]  \nonumber \\
&  -R_{t} + A \sum_{\rho} \frac{  \Gamma(A(k-\rho))   }{\phi'(\rho)} x^{-A(k-\rho)},
\end{align}
where $\rho$ runs through the non-trivial zeros of $\phi(s)$ and the convergence of the infinite series over $\rho$ adhere to the bracketing condition \eqref{bracketing}, and $R_{t}$ represents the residual term corresponding to the trivial zeros of $\phi(k- \frac{s}{A})$ with $0\leq \Re\left(  k- \frac{s}{A} \right) \leq Ak+B$. 

\end{theorem}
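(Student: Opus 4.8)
The proof would follow the now-classical route --- Cahen--Mellin integral, displacement of the contour, functional equation, Mellin--Barnes representation of ${}_1F_1$ --- carried out uniformly over the class \eqref{modified functional equation for phi and psi}. I would start from $e^{-y}=\frac{1}{2\pi i}\int_{(\lambda)}\Gamma(s)y^{-s}\,ds$ (valid for $\lambda>0$, $y>0$), take $y=x/n^{1/A}$, multiply by $a^{-1}(n)n^{-k}$, and sum over $n$. The hypothesis $k\ge\delta+\frac BA$ ensures that $1/\phi(w)=\sum_{n\ge 1}a^{-1}(n)n^{-w}$ converges absolutely on a half-plane that meets a neighbourhood of $\Re(w)=k$, so for small $\lambda>0$ the sum and integral may be interchanged, giving
\begin{equation*}
\sum_{n=1}^{\infty}\frac{a^{-1}(n)}{n^{k}}\exp\!\left(-\frac{x}{n^{1/A}}\right)=\frac{1}{2\pi i}\int_{(\lambda)}\Gamma(s)\,\frac{x^{-s}}{\phi(k-s/A)}\,ds .
\end{equation*}

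Next I would move the line of integration to the right, to $\Re(s)=\lambda''$ placed just beyond $Ak$ and --- in the genuinely twisted cases --- still to the left of $Ak+B$. In the strip $\lambda<\Re(s)<\lambda''$ the only poles of the integrand are the simple poles at $s=A(k-\rho)$, one for each non-trivial zero $\rho$ of $\phi$ (poles of $\phi$ itself are \emph{zeros} of the integrand, and the poles of $\Gamma(s)$ lie to the left of the initial line). Since $\frac{d}{ds}\phi(k-s/A)=-\frac1A\phi'(k-s/A)$ and $\rho$ is simple, the residue at $s=A(k-\rho)$ equals $-A\,\Gamma(A(k-\rho))\,x^{-A(k-\rho)}/\phi'(\rho)$, and collecting these --- with the resulting series over $\rho$ understood in the bracketed sense \eqref{bracketing} --- produces exactly the term $A\sum_{\rho}\Gamma(A(k-\rho))x^{-A(k-\rho)}/\phi'(\rho)$ of \eqref{main identity}. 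The remaining contribution $-R_t$ comes from the first trivial zero of $\phi$, at $w=-B/A$ (equivalently $s=Ak+B$, equivalently the pole of the gamma quotient produced in the next step): expanding $\phi$ locally there by means of \eqref{modified functional equation for phi and psi} evaluates this residue in closed form, and for the self-dual case $\phi=\psi=\zeta$ it vanishes, which recovers \eqref{AGM}.

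On the shifted line I substitute the functional equation in the form
\begin{equation*}
\frac{1}{\phi(k-s/A)}=\frac{1}{\nu}\,c^{\,2k-\frac{2s}{A}-\delta}\;\frac{\Gamma(Ak+B-s)}{\Gamma(A\delta+B-Ak+s)}\cdot\frac{1}{\psi\!\left(\delta-k+\frac sA\right)},
\end{equation*}
expand $1/\psi(\delta-k+s/A)=\sum_{n\ge1}b^{-1}(n)\,n^{k-\delta}\,n^{-s/A}$ (legitimate because $\Re(s)=\lambda''$ now lies in the half-plane of absolute convergence of $1/\psi$), and interchange sum and integral. For each $n$ the substitution $s\mapsto s-(Ak+B)$ turns the inner integral into the Mellin--Barnes representation
\begin{equation*}
{}_1F_1\!\left(\begin{matrix}\alpha\\ \beta\end{matrix}\,\Big|\,-z\right)=\frac{\Gamma(\beta)}{\Gamma(\alpha)}\cdot\frac{1}{2\pi i}\int\frac{\Gamma(\alpha+t)\,\Gamma(-t)}{\Gamma(\beta+t)}\,z^{t}\,dt
\end{equation*}
with $\alpha=Ak+B$, $\beta=A\delta+2B$, $z=1/\bigl((c^{2}n)^{1/A}x\bigr)$; tidying the powers of $c$, $n$ and $x$ reproduces the prefactor $c^{-(A\delta+2B)/A}(\nu x^{Ak+B})^{-1}\Gamma(Ak+B)/\Gamma(A\delta+2B)$ and the series $\sum_{n\ge1}b^{-1}(n)n^{-(A\delta+B)/A}\,{}_1F_1(\cdots)$. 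Since $\sum_{n\ge1}b^{-1}(n)n^{-(A\delta+B)/A}=1/\psi(\delta+B/A)$, I then replace ${}_1F_1$ by ${}_1F_1-1$ inside the sum --- this also supplies the extra decay ${}_1F_1-1=O(z)$ that is needed to justify the previous interchange --- and absorb the constant so created into $R_t$. Assembling the three pieces yields \eqref{main identity}.

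The genuine difficulty is analytic rather than formal. Each interchange of summation and integration, and each displacement of the contour, must be justified via Stirling's asymptotics for $\Gamma$ on vertical strips together with polynomial (convexity) bounds for $1/\phi$ and $1/\psi$; this is exactly what forces the range $k\ge\delta+\frac BA$ and the existence of admissible lines $\Re(s)=\lambda$ and $\Re(s)=\lambda''$. The most delicate point is the sum over the non-trivial zeros: because $\phi'(\rho)$ can be arbitrarily small, $\sum_{\rho}\Gamma(A(k-\rho))x^{-A(k-\rho)}/\phi'(\rho)$ converges only after the zeros are grouped as in \eqref{bracketing}, so the passage to residues has to be performed along a carefully chosen sequence of horizontal segments $\Im(s)=\pm T_j$ staying bounded away from the zeros, just as in the classical treatment of \eqref{Hardy-Littlewood}. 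Finally, in the borderline situation where the half-planes of convergence governing $1/\phi$, $1/\psi$ and the ${}_1F_1$-integral merely abut rather than overlap --- which is precisely what happens for $\zeta(s)$, where $\delta=1$ and $B=0$ --- there is no $\lambda''$ strictly between them, and one has to run the argument on the common boundary line using the (conditional) convergence of $\sum_{n}b^{-1}(n)n^{-(A\delta+B)/A}$ together with an Abel-summation argument to push the contour through.
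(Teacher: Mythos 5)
Your overall route coincides with the paper's: the Cahen--Mellin integral for $e^{-y}$, a rightward contour shift collecting residues at $s=A(k-\rho)$ and at the trivial zeros, the functional equation \eqref{modified functional equation for phi and psi} on the shifted line, termwise expansion of $1/\psi$, and a Mellin--Barnes integral recognised as ${}_1F_1$ via \eqref{Meijer-G} and \eqref{Slater}. The step that fails as written is the placement of the shifted line. You take $Ak<\lambda''<Ak+B$ and assert that $1/\psi(\delta-k+s/A)$ may be expanded there because the line lies in the half-plane of absolute convergence of $1/\psi$. Under the paper's normalization the common abscissa of absolute convergence of $1/\phi$ and $1/\psi$ is $k_0=\delta+B/A$, so the expansion requires $\Re(s)>Ak+B$, and the interval $(Ak,Ak+B)$ lies entirely to the left of that half-plane. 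Hence the tension you diagnose only in the borderline case $B=0$ is present for every $B\ge 0$: the half-plane where $1/\psi$ converges absolutely and the region where the Mellin--Barnes line separates the poles of $\Gamma(s)$ from those of $\Gamma(Ak+B-s)$ never overlap. The paper resolves this by shifting first to $\lambda\in(Ak+B,Ak+B+\epsilon)$, expanding $1/\psi$ there (legitimately), and only then moving each individual $n$-integral back to $0<\beta<Ak+B$; the residue of $\Gamma(Ak+B-s)$ at $s=Ak+B$ crossed in that second shift is exactly the $-1$ inside the bracket of \eqref{main identity}. Your by-hand replacement of ${}_1F_1$ by ${}_1F_1-1$ lands on the same formula only because the constant you absorb into $R_t$ happens to equal the residue at the trivial zero $k-s/A=-B/A$; the interchange of sum and integral on your line is never justified.

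A secondary inaccuracy: you first claim that in the strip the only poles of the integrand are those at $s=A(k-\rho)$, and then attribute $R_t$ to the first trivial zero of $\phi$ at $w=-B/A$ alone. In general every trivial zero of $\phi(k-s/A)$ swept by the contour contributes; for $\phi(s)=\zeta(s)\zeta(s-r)$ there are $\lfloor r/2\rfloor$ of them (the finite sum over $m$ in Theorem \ref{corollary for zeta(s) zeta(s-r)}), none of which sits at $s=Ak+B$. Your evaluation of the residues at the non-trivial zeros, and your remarks on the bracketing \eqref{bracketing} and on choosing horizontal segments $\Im(s)=\pm T_j$ away from the zeros, agree with the paper.
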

\begin{remark}
Setting $a(n) = b(n) = 1$ and thus $a^{-1}(n)= b^{-1}(n) = \mu(n)$, $\phi(s) = \psi(s) = \zeta(s)$  in Theorem \ref{General_Identity}   with $A = \frac{1}{2}, B = 0, c = \frac{1}{\sqrt{\pi}}, \delta = 1 $ and $\nu = 1,$ one can easily obtain the identity \eqref{AGM}.
\end{remark}
\begin{remark}
Let $\chi$ be a primitive Dirichlet character modulo $q$ and $L(s,\chi)$ be the associated Dirichlet series.  Letting $a(n) = \chi(n), b(n) = \overline{\chi(n)}$ and thus $a^{-1}(n)= \chi(n)\mu(n),  b^{-1}(n) = \overline{\chi(n)} \mu(n) $,   $\phi(s) = L(s, \chi), \psi(s) = L(s, \overline{\chi}  )$ in Theorem \ref{General_Identity} with $A = \frac{1}{2}, B = \frac{a}{2}, c = \sqrt{\frac{q}{\pi}}, \delta = 1 $ and $\nu = \frac{G(\chi)}{i^a \sqrt{q}},$ and upon simplification,  we can  derive \eqref{character_analogue}.
\end{remark}
Now we shall state a few more interesting results coming from our Theorem \ref{General_Identity}.  
First,  we state an ingenious modular relation,  which is a perfect generalization of Ramanujan-Hardy-Littlewood identity \eqref{Hardy-Littlewood symmetric}. 
\begin{corollary}  \label{general_alpha_beta_identity} 
Assuming all the hypothesis of the Theorem \ref{General_Identity} to be true.  Let $\alpha$,  $\beta$ be positive real numbers such that $\alpha \beta = \frac{1}{c^{2/A}}$.  Then we have
\begin{align*}
& \sqrt{\bar{\nu}}\alpha^{\frac{A\delta +2B}{2}} \sum_{n=1}^{\infty}\frac{b^{-1}(n)}{n^{\delta + B/A}}  \exp\left(-\frac{\alpha}{n^{1/A}}\right) - \sqrt{\nu} \beta^{\frac{A\delta +2B}{2}} \sum_{n=1}^{\infty}\frac{a^{-1}(n)}{n^{\delta + B/A}} \exp\left(-\frac{\beta}{n^{1/A} }\right) \nonumber \\ &= \frac{\alpha^{\frac{A\delta +2B}{2}}}{\sqrt{\nu}} \sum_{n=1}^{\infty}\frac{b^{-1}(n)}{n^{\delta + B/A}} + \sqrt{\nu} \beta^{\frac{A\delta +2B}{2}} R_t -  A \sqrt{\nu} \sum_{\rho}  \frac{  \Gamma(A(\delta-\rho)+B)   }{\phi'(\rho)} \beta^{ \frac{A \delta-2B}{2}}  .
\end{align*}
\end{corollary}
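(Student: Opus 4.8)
The plan is to deduce Corollary \ref{general_alpha_beta_identity} from Theorem \ref{General_Identity} by specializing the parameter $k$ to the boundary value $k = \delta + B/A$ and then exploiting the confluent hypergeometric identity that reduces ${}_1F_1$ to an exponential. First I would set $k = \delta + \frac{B}{A}$, which is the smallest admissible value in Theorem \ref{General_Identity}. With this choice, $Ak + B = A\delta + 2B$, so the ratio $\frac{\Gamma(Ak+B)}{\Gamma(A\delta+2B)}$ collapses to $1$, and, crucially, the hypergeometric function becomes
\[
{}_1F_1\!\left( \begin{matrix} A\delta + 2B \\ A\delta + 2B \end{matrix} \Big|\, z \right) = e^{z},
\]
since the numerator and denominator parameters coincide. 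Thus the bracketed term $\left[{}_1F_1(\cdots) - 1\right]$ in \eqref{main identity} becomes simply $\exp\!\left(-\tfrac{1}{(c^2 n)^{1/A} x}\right) - 1$, turning the double series on the right of Theorem \ref{General_Identity} into a clean exponential series. After this substitution, \eqref{main identity} reads (for $x>0$)
\[
\sum_{n=1}^{\infty} \frac{a^{-1}(n)}{n^{\delta + B/A}} \exp\!\left(-\frac{x}{n^{1/A}}\right)
= \frac{c^{-\frac{A\delta+2B}{A}}}{\nu\, x^{A\delta+2B}} \sum_{n=1}^{\infty} \frac{b^{-1}(n)}{n^{\delta+B/A}} \left[ \exp\!\left(-\frac{1}{(c^2 n)^{1/A} x}\right) - 1 \right] - R_t + A \sum_{\rho} \frac{\Gamma(A(\delta-\rho))}{\phi'(\rho)} x^{-A(\delta - \rho)}.
\]

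Next I would perform the change of variable to the symmetric $\alpha,\beta$ form. Writing $x = \beta$ with $\alpha\beta = c^{-2/A}$, one has $\frac{1}{(c^2 n)^{1/A} x} = \frac{1}{c^{2/A} n^{1/A} \beta} = \frac{\alpha}{n^{1/A}}$, so the exponential on the right-hand side becomes $\exp(-\alpha/n^{1/A})$; likewise $x^{A\delta + 2B} = \beta^{A\delta+2B}$ and $c^{-\frac{A\delta+2B}{A}} = (\alpha\beta)^{\frac{A\delta+2B}{2}}$. Multiplying the whole identity through by $\sqrt{\nu}\,\beta^{\frac{A\delta+2B}{2}}$ and using $c^{-\frac{A\delta+2B}{A}} \beta^{-(A\delta+2B)} = \alpha^{\frac{A\delta+2B}{2}}\beta^{-\frac{A\delta+2B}{2}}$ together with $\frac{\sqrt\nu}{\nu} = \frac{1}{\sqrt{\nu}} = \sqrt{\bar\nu}$ (valid since $|\nu|=1$), the term $\sum_n \frac{b^{-1}(n)}{n^{\delta+B/A}} \exp(-\alpha/n^{1/A})$ acquires the coefficient $\sqrt{\bar\nu}\,\alpha^{\frac{A\delta+2B}{2}}$, the $-1$ contributes $-\frac{\alpha^{\frac{A\delta+2B}{2}}}{\sqrt\nu}\sum_n \frac{b^{-1}(n)}{n^{\delta+B/A}}$ after moving to the other side, the residual term becomes $\sqrt\nu\,\beta^{\frac{A\delta+2B}{2}} R_t$, and the zero-sum, using $x^{-A(\delta-\rho)} = \beta^{-A(\delta-\rho)}$ and $\beta^{A\delta} = (\alpha\beta)^{A\delta}\alpha^{-A\delta} = c^{-2\delta}\alpha^{-A\delta}$ together with $\Gamma(A(\delta-\rho)) = \Gamma(A(\delta-\rho)+B)$ when $B=0$ — here I should be careful: in the stated corollary the gamma factor is $\Gamma(A(\delta-\rho)+B)$ and the power is $\beta^{\frac{A\delta-2B}{2}}$, so I would track the $B$-dependent powers of $c$ and $\beta$ explicitly to land on exactly $-A\sqrt\nu \sum_\rho \frac{\Gamma(A(\delta-\rho)+B)}{\phi'(\rho)}\beta^{\frac{A\delta-2B}{2}}$. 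Rearranging the two exponential sums to opposite sides (subtracting $\sqrt\nu\,\beta^{\frac{A\delta+2B}{2}}\sum_n \frac{a^{-1}(n)}{n^{\delta+B/A}}\exp(-\beta/n^{1/A})$ from the left) then yields precisely the asserted identity.

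The main obstacle, and the step requiring genuine care rather than routine manipulation, is the bookkeeping of the constants $c$, $\nu$, and the $B$-dependent exponents in the zero-sum: one must verify that $\Gamma(A(k-\rho))\big|_{k=\delta+B/A} = \Gamma(A(\delta-\rho)+B)$ and that the power of $\beta$ emerging from $\beta^{\frac{A\delta+2B}{2}} \cdot \beta^{-A(\delta-\rho)}$, after absorbing the relation $\alpha\beta = c^{-2/A}$ to symmetrize, is exactly $\beta^{\frac{A\delta-2B}{2}}$ with the residual $\beta^\rho$-type dependence correctly packaged (in the Hardy–Littlewood case $\beta^\rho$ appears; here the sum over $\rho$ in the corollary is written without an explicit $\beta^\rho$, which suggests it is absorbed or the statement uses a normalization I would need to reconcile — I would double-check this against the $A=1/2$, $B=0$ specialization that must recover \eqref{Hardy-Littlewood symmetric}). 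A secondary technical point is justifying the interchange of summation and the termwise passage to the exponential, but since the exponential series for ${}_1F_1$ with equal parameters converges absolutely and the outer series over $n$ converges by the hypothesis on $\sum b^{-1}(n) n^{-1}$ (after noting $\delta + B/A \geq 1$ in the relevant cases), this is standard and can be dispatched by absolute convergence or by appeal to the justification already carried out in the proof of Theorem \ref{General_Identity}.
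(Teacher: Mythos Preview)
Your approach is correct and matches the paper's own proof exactly: specialize $k=\delta+B/A$ in Theorem \ref{General_Identity} so that ${}_1F_1$ collapses to an exponential, set $x=\beta$ with $\alpha\beta=c^{-2/A}$, multiply through by $\sqrt{\nu}\,\beta^{(A\delta+2B)/2}$, and use $|\nu|=1$. Your suspicion about the zero-sum is also well-founded: tracking the exponent gives $\beta^{(A\delta+2B)/2}\cdot\beta^{-A(\delta-\rho)-B}=\beta^{A\rho-A\delta/2}$, so the stated power $\beta^{(A\delta-2B)/2}$ in the corollary appears to be a typographical slip that drops the $\rho$-dependence --- checking against \eqref{Hardy-Littlewood symmetric} confirms a $\beta^{\rho}$-type factor should be present.
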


The next result is corresponding to the Epstein zeta function,  defined in Example \ref{r_b(n)}.  
\begin{corollary}\label{corolary for r_2(n)}
Let $k \geq 1$ and $x$ be any positive real number.  Let $r_2(n)$ be the number of ways $n$ can be written as sum of two squares and the associated $L$-function is  defined in Example \ref{r_b(n)} for $j =2.$ Then, we have 
\begin{align}\label{main identity for r_2(n)}
\sum_{n=1}^{\infty} \frac{r_2^{-1}(n)}{n^k}  \exp\left(-\frac{x}{n}\right) = \frac{\pi \Gamma(k)}{x^k} \sum_{n=1}^{\infty} \frac{r_2^{-1}(n)}{n}  {}_1F_{1} \left( \begin{matrix}
k \\ 1  \end{matrix}  \Big|- \frac{ \pi^2}{nx} \right)  +  \sum_{\rho} \frac{\Gamma(k-\rho)}{\zeta_{id}^{'}(2, \rho)} x^{-(k-\rho)}.
\end{align}
Here, $\rho$ runs over all the non-trivial zeros of $\zeta_{id}(2,s)$ and are assumed to be simple. The convergence of the series over $\rho$ follows under the assumption of the bracketing condition \eqref{bracketing}.
\end{corollary}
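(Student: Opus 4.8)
The plan is to deduce Corollary~\ref{corolary for r_2(n)} directly from Theorem~\ref{General_Identity} by specializing the parameters to the Epstein zeta function $\zeta_{id}(2,s)$. From Example~\ref{r_b(n)} with $j=2$, the relevant data is $a(n)=b(n)=r_2(n)$, $c=\frac{1}{\pi}$, $A=1$, $B=0$, $\delta=1$ and $\nu=1$; hence $a^{-1}(n)=b^{-1}(n)=r_2^{-1}(n)$ and $\phi(s)=\psi(s)=\zeta_{id}(2,s)$. First I would check the admissibility condition $k\geq\delta+\tfrac{B}{A}=1$, which is exactly the hypothesis $k\geq 1$ imposed in the statement, so Theorem~\ref{General_Identity} applies. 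Substituting $A=1$, $B=0$ we have $Ak+B=k$, $A\delta+2B=1$, $\tfrac{A\delta+B}{A}=1$, and $1/A=1$, so the left-hand side becomes $\sum_{n=1}^\infty \frac{r_2^{-1}(n)}{n^k}\exp(-x/n^{1})$, matching \eqref{main identity for r_2(n)}.

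Next I would simplify each piece of the right-hand side of \eqref{main identity}. The prefactor $\frac{c^{-(A\delta+2B)/A}}{\nu x^{Ak+B}}\cdot\frac{\Gamma(Ak+B)}{\Gamma(A\delta+2B)}$ reduces to $\frac{c^{-1}}{x^k}\cdot\frac{\Gamma(k)}{\Gamma(1)}=\frac{\pi\,\Gamma(k)}{x^k}$ since $c^{-1}=\pi$ and $\Gamma(1)=1$. The confluent hypergeometric argument $-\frac{1}{(c^2 n)^{1/A}x}$ becomes $-\frac{1}{c^2 n x}=-\frac{\pi^2}{nx}$, and the bracket $\big[{}_1F_1\!\left(\begin{matrix}k\\1\end{matrix}\,\big|-\frac{\pi^2}{nx}\right)-1\big]$ appears. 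However, the stated Corollary has no $-1$ term inside the bracket and no leftover $R_t$ term; these must cancel. The key point is that for the Epstein zeta function with $j=2$ the only pole is the simple pole at $s=\tfrac{j}{2}=1$, and there are no trivial zeros of $\zeta_{id}(2,s)$ beyond what the functional equation dictates — more precisely, $\zeta_{id}(2,s)=4\zeta(s)\beta(s)$ by \eqref{Epstein in terms of zeta}, so its zeros are the non-trivial zeros of $\zeta(s)$ and $\beta(s)=L(s,\chi_{4,2})$ together with the trivial zeros, which for $\zeta(s)$ are at $s=-2,-4,\dots$ and for $\beta(s)$ at $s=-1,-3,\dots$. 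I would therefore examine the residual term $R_t$: it collects residues of the integrand at the trivial zeros of $\phi(k-s/A)$ lying in the relevant strip, and the factor $\Gamma(k)/\Gamma(1)$ together with the $-1$ correction term (which represents the $n$-independent contribution, i.e. the residue/constant-term adjustment at $s=0$ arising from subtracting the $a^{-1}$-series constant). The main calculation is to show that $-R_t$ exactly cancels the contribution of the $\sum_n \frac{b^{-1}(n)}{n}\cdot(-1)$ term, leaving precisely $\frac{\pi\Gamma(k)}{x^k}\sum_n\frac{r_2^{-1}(n)}{n}\,{}_1F_1\!\left(\begin{matrix}k\\1\end{matrix}\,\big|-\frac{\pi^2}{nx}\right)$; this cancellation should follow from the way $R_t$ is defined in the proof of Theorem~\ref{General_Identity} combined with the fact that $\delta=1$ sits at the edge of the admissible range.

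Finally, the zero-sum $A\sum_\rho \frac{\Gamma(A(k-\rho))}{\phi'(\rho)}x^{-A(k-\rho)}$ becomes $\sum_\rho \frac{\Gamma(k-\rho)}{\zeta_{id}'(2,\rho)}x^{-(k-\rho)}$ since $A=1$, which is exactly the sum in \eqref{main identity for r_2(n)}; the bracketing condition \eqref{bracketing} is inherited verbatim, and the simplicity assumption on the zeros of $\zeta_{id}(2,s)$ is imposed so that $\phi'(\rho)\neq 0$ makes sense. I expect the main obstacle to be the careful bookkeeping of $R_t$ and the ``$-1$'' constant term: one must verify that in this specific instance ($\delta=1$, $B=0$, pole at $s=1$) these two extraneous pieces genuinely annihilate each other rather than leaving a residual polynomial in $1/x$. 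This is a matter of unwinding the contour-integral definitions from the proof of Theorem~\ref{General_Identity} rather than any new analytic input, so it should be routine once that proof is in hand, but it is the step where sign errors and misidentified residues are most likely to creep in.
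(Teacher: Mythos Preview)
Your approach—specializing Theorem~\ref{General_Identity} with $a(n)=b(n)=r_2(n)$, $c=1/\pi$, $A=1$, $B=0$, $\delta=1$, $\nu=1$—is exactly what the paper does, and all your parameter substitutions are correct. However, your proposed mechanism for disposing of the $-R_t$ term and the ``$-1$'' inside the bracket is mistaken: they do not cancel against each other; each vanishes on its own.

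For $R_t$: the trivial zeros of $\zeta_{id}(2,s)$ lie at the negative integers (as you correctly list), so the trivial zeros of $\phi(k-s)=\zeta_{id}(2,k-s)$ occur at $s=k+1,k+2,\dots$. The contour in the proof of Theorem~\ref{General_Identity} runs from $\Re(s)=d\in(-1,0)$ to $\Re(s)=\lambda\in(k,k+\epsilon)$ with $0<\epsilon<1$, so none of these trivial zeros fall inside, and $R_t=0$ outright. There is nothing for it to cancel against.

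For the ``$-1$'' term: its total contribution is $-\dfrac{\pi\Gamma(k)}{x^k}\displaystyle\sum_{n=1}^\infty \dfrac{r_2^{-1}(n)}{n}$, and this series equals $1/\zeta_{id}(2,1)=0$ because $\zeta_{id}(2,s)$ has a simple pole at $s=1$ (equivalently, via \eqref{Epstein in terms of zeta}, $\zeta(s)$ has a pole there). This is where the pole at $s=1$ enters, not through any interaction with $R_t$.

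So there is no delicate cancellation to unwind—both extraneous pieces are zero independently, and the corollary follows immediately once you observe these two facts.
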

A few numerical evidences for the above result is given in Table \ref{Table for r_2(n)}.  
\begin{remark}
In particular,  letting $k=1$ and $x= \beta>0$ with $\alpha \beta = \pi^2$ in Corollary \ref{corolary for r_2(n)}, we obtain the following beautiful identity,  
\begin{align*}
\sqrt{\alpha}\sum_{n=1}^{\infty} \frac{r_2^{-1}(n)}{n} \exp\left(-\frac{\alpha}{n}\right) &- \sqrt{\beta}\sum_{n=1}^{\infty} \frac{r_2^{-1}(n)}{n} \exp\left(-\frac{\beta}{n}\right)=  - \frac{1}{\sqrt{\beta}} \sum_{\rho} \frac{\Gamma(1-\rho)}{\zeta_{id}^{'}(2, \rho)} \beta^{\rho}.
\end{align*}
\end{remark}
The next identity is associated to the Dedekind zeta function over an imaginary quadratic field.  
\begin{corollary}\label{corollary for dedeking zeta}
Let $k \geq 1$ be any real number. If $\mathsf{a}_n$ counts the number of integral ideals of norm $n$ of an imaginary quadratic field $\mathbb{Q}(\sqrt{-D})$ and the associated $L$-function is defined as in \eqref{def dedekind} and satisfies the functional equation \eqref{fn eq dedekind}, then for any $x>0$, we have
\begin{align*}
\sum_{n=1}^{\infty} \frac{\mathsf{a}_n^{-1}}{n^k} \exp\left(-\frac{x}{n}\right)&= 	   \frac{2\pi \Gamma(k)}{\sqrt{D} x^k} \sum_{n=1}^{\infty} \frac{\mathsf{a}_n^{-1}}{n}  {}_1F_{1} \left( \begin{matrix}
k \\ 1  \end{matrix}  \Big|- \frac{ 4\pi^2}{Dnx} \right)  +  \sum_{\rho} \frac{\Gamma(k-\rho)}{\zeta_K^{'}(\rho)} x^{-(k-\rho)},
\end{align*}
where $\rho$ runs over the non-trivial zeros of $\zeta_K(s)$ and convergence of the infinite series over $\rho$ follows from the bracketing condition \eqref{bracketing}.
\end{corollary}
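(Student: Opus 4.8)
The plan is to obtain Corollary \ref{corollary for dedeking zeta} as a direct specialization of Theorem \ref{General_Identity}, the only substantive points being the elimination of the $-1$ sitting inside the confluent hypergeometric factor and of the residual term $R_t$. Following Example \ref{example dedekind} I would take $\phi(s)=\psi(s)=\zeta_K(s)$ and $a(n)=b(n)=\mathsf{a}_n$, so that $a^{-1}(n)=b^{-1}(n)=\mathsf{a}_n^{-1}$, with $A=1$, $B=0$, $\delta=1$, $\nu=1$ and $c=\sqrt{D}/(2\pi)$; then the functional equation \eqref{fn eq dedekind} is exactly the required instance of \eqref{modified functional equation for phi and psi}, and since $\delta+B/A=1$ the admissibility hypothesis $k\geq\delta+B/A$ of Theorem \ref{General_Identity} coincides with the hypothesis $k\geq 1$ of the corollary. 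Substituting into \eqref{main identity} and simplifying, namely $\Gamma(Ak+B)/\Gamma(A\delta+2B)=\Gamma(k)/\Gamma(1)=\Gamma(k)$, $c^{-(A\delta+2B)/A}=c^{-1}=2\pi/\sqrt{D}$, the hypergeometric argument $-1/\big((c^2 n)^{1/A}x\big)=-4\pi^2/(Dnx)$, and $A(k-\rho)=k-\rho$, I arrive at
\begin{align*}
\sum_{n=1}^{\infty}\frac{\mathsf{a}_n^{-1}}{n^k}\exp\left(-\frac{x}{n}\right)=\frac{2\pi\Gamma(k)}{\sqrt{D}\,x^{k}}\sum_{n=1}^{\infty}\frac{\mathsf{a}_n^{-1}}{n}\left[{}_1F_{1}\left(\begin{matrix} k \\ 1 \end{matrix}\,\Big|\,-\frac{4\pi^2}{Dnx}\right)-1\right]-R_t+\sum_{\rho}\frac{\Gamma(k-\rho)}{\zeta_K'(\rho)}x^{-(k-\rho)}.
\end{align*}

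It then remains to show that the two correction terms disappear. For $R_t$: by its description in Theorem \ref{General_Identity} it collects the residues coming from the trivial zeros of $\phi\big(k-s/A\big)=\zeta_K(k-s)$ lying in the strip $0\leq\Re(k-s)\leq Ak+B=k$. From \eqref{fn eq dedekind} the completed function $\big(\sqrt{D}/(2\pi)\big)^{s}\Gamma(s)\zeta_K(s)$ is meromorphic with poles only at $s=0$ and $s=1$, so the trivial zeros of $\zeta_K$ occur only at negative integers, cancelling the poles of $\Gamma(s)$ there (and $\zeta_K(0)\neq 0$, the pole of $\Gamma$ at $0$ surviving as a pole of the completed function); hence every trivial zero $w$ of $\zeta_K$ has $\Re(w)<0$, none lies in $[0,k]$, and $R_t=0$. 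For the $-1$ term, the key fact is that $\zeta_K(s)$ has a simple pole at $s=1$ (with residue furnished by the analytic class number formula), so $1/\zeta_K(s)$ is analytic at $s=1$ with $1/\zeta_K(1)=0$. Since $1/\zeta_K(s)=\sum_{n=1}^{\infty}\mathsf{a}_n^{-1}n^{-s}$ for $\Re(s)>1$ and this Dirichlet series converges at $s=1$ — part of the standing hypotheses of the conjectural framework, and in any case a consequence of the prime ideal theorem for $K$ — an Abelian limit argument gives $\sum_{n=1}^{\infty}\mathsf{a}_n^{-1}/n=\lim_{s\to 1^{+}}1/\zeta_K(s)=0$. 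Therefore the $-1$ inside the bracket, multiplied by $\sum_{n}\mathsf{a}_n^{-1}/n=0$, contributes nothing, and the displayed identity collapses to the statement of the corollary.

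The computation is otherwise routine once Theorem \ref{General_Identity} is granted; the single genuinely analytic input is the vanishing $\sum_{n=1}^{\infty}\mathsf{a}_n^{-1}/n=0$, which I expect to be the one place needing care, since it is precisely there that the pole of $\zeta_K$ at $s=1$ enters and is cancelled — the same mechanism as the classical identity $\sum_{n}\mu(n)/n=0$ underlying the Hardy--Littlewood relation \eqref{Hardy-Littlewood}. The simplicity of the non-trivial zeros of $\zeta_K(s)$ and the bracketing condition \eqref{bracketing} controlling the convergence of the series over $\rho$ are inherited directly from Theorem \ref{General_Identity}.
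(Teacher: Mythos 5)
Your proposal is correct and follows essentially the same route as the paper: specialize Theorem \ref{General_Identity} with $c=\sqrt{D}/(2\pi)$, $A=1$, $B=0$, $\delta=1$, $\nu=1$, observe that $R_t=0$ because $\zeta_K(k-s)$ has no trivial zeros with $0\leq\Re(k-s)\leq k$, and kill the $-1$ inside the bracket via $\sum_{n=1}^{\infty}\mathsf{a}_n^{-1}/n=0$, which comes from the simple pole of $\zeta_K(s)$ at $s=1$. The paper states these two facts without elaboration, so your added justifications (trivial zeros lying strictly in $\Re(s)<0$, and the Abelian limit argument for the vanishing sum) are consistent with, and slightly more detailed than, the published proof.
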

\begin{remark} In particular, substituting $k=1$ and $x=\beta>0$ and $\alpha \beta = \frac{4\pi^2}{D}$ in Corollary \ref{corollary for dedeking zeta}  gives the following identity\footnote{In \cite[Cor. ~3.4]{DGV21},  there is an extra $1/2$ on the right side of the identity. } of Dixit, Gupta and Vatwani \cite[Cor. ~3.4]{DGV21},
\begin{align*}
\sqrt{\alpha} \sum_{n=1}^{\infty} \frac{\mathsf{a}_n^{-1}}{n} \exp\left(-\frac{\alpha}{n}\right)-\sqrt{\beta} \sum_{n=1}^{\infty} \frac{\mathsf{a}_n^{-1}}{n} \exp\left(-\frac{\beta}{n}\right)= - \frac{1}{\sqrt{\beta}}  \sum_{\rho} \frac{\Gamma(1-\rho)}{\zeta_K^{'}(\rho)} \beta^{\rho}.
\end{align*}
\end{remark}

The next result corresponds to $\phi(s) = \zeta(s) \zeta(s-r)$,  defined in Example \ref{def zeta(s) zeta(s-r)},  an important example of a Dirichlet series lying in Chadrasekharan-Narasimhan class of $L$-functions,  which lacks the Riemann hypothesis.
\begin{theorem}\label{corollary for zeta(s) zeta(s-r)}
 Let $\sigma_r(n)$ denote the sum of $r^{th}$ powers of positive divisors of $n$. Then the corresponding $L$-function as defined in Example \ref{def zeta(s) zeta(s-r)} satisfies the functional equation \eqref{fn equ zeta(s) zeta(s-r)}. Let $k \geq r+1$, where $r$ is an odd positive integer.  Then for any positive $x$, we have
\begin{align*}
\sum_{n=1}^{\infty} \frac{\sigma_r^{-1}(n)}{n^k} \exp\left(-\frac{x}{n}\right)
&=  \frac{(-1)^{\frac{r+1}{2}}(2\pi)^{r+1}}{x^k} \frac{\Gamma(k)}{\Gamma(r+1)} \sum_{n=1}^{\infty} \frac{\sigma_r^{-1}(n)}{n^{r+1}} {}_1F_{1} \left( \begin{matrix}
k \\ r+1  \end{matrix}  \Big|- \frac{4 \pi^2}{nx} \right)  \nonumber \\ 
& + \sum_{m=1}^{[\frac{r}{2}]} \frac{\Gamma(k-r+2m)}{\zeta(r-2m) \zeta(2m+1) } \frac{(-1)^m 2(2\pi)^{2m}}{(2m)!} x^{r-k-2m}   \nonumber \\ 
& + \sum_{\rho} \frac{x^{\rho-k}}{\zeta^{'}(\rho)} \bigg[\frac{\Gamma(k-\rho)}{\zeta(\rho -r)} + \frac{x^r \Gamma(k-r-\rho)}{\zeta(r+\rho)}\bigg],
\end{align*}
where $\rho$ runs over non-trivial zeros of $\zeta(s)$ and are assumed to be simple. The convergence of the associated series follows under the assumption of bracketing condition \eqref{bracketing}.
\end{theorem}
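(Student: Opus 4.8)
The plan is to specialize Theorem~\ref{General_Identity} to the situation of Example~\ref{def zeta(s) zeta(s-r)} and then to make the residual term $R_t$ entirely explicit. For $\phi(s)=\psi(s)=\zeta(s)\zeta(s-r)$ with $a(n)=b(n)=\sigma_r(n)$, the functional equation \eqref{fn equ zeta(s) zeta(s-r)} is precisely \eqref{modified functional equation for phi and psi} with $A=1$, $B=0$, $c=\frac{1}{2\pi}$, $\delta=r+1$ and $\nu=(-1)^{(r+1)/2}$, and the hypothesis $k\geq\delta+B/A$ reads $k\geq r+1$, which is assumed. The non-trivial zeros of $\phi$ are simple: they are the non-trivial zeros $\rho$ of $\zeta(s)$ (simple, by hypothesis) together with their shifts $\rho+r$ (the non-trivial zeros of $\zeta(s-r)$), and these two families are disjoint because $\Re(\rho)$ and $\Re(\rho+r)-r$ both lie in $(0,1)$ while $r\geq1$. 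Hence Theorem~\ref{General_Identity} applies verbatim. Substituting the parameters into \eqref{main identity} and using $c^{-\delta}=(2\pi)^{r+1}$, $1/\nu=\nu=(-1)^{(r+1)/2}$, together with the fact that the confluent-hypergeometric argument $-1/((c^2n)^{1/A}x)$ becomes $-4\pi^2/(nx)$, the first term on the right of \eqref{main identity} turns into the asserted ${}_1F_1$-series; moreover the constant $\sum_{n=1}^{\infty}\sigma_r^{-1}(n)\,n^{-(r+1)}$ arising from the ``$-1$'' inside the bracket equals $1/\phi(r+1)=1/(\zeta(r+1)\zeta(1))=0$, so the bracket may be replaced by ${}_1F_1$ alone.

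Next I would rewrite the non-trivial-zero sum, which is purely algebraic. From $\phi'(s)=\zeta'(s)\zeta(s-r)+\zeta(s)\zeta'(s-r)$ one gets $\phi'(\rho)=\zeta'(\rho)\zeta(\rho-r)$ at a zero $\rho$ of $\zeta(s)$ and $\phi'(\rho+r)=\zeta(\rho+r)\zeta'(\rho)$ at a zero $\rho+r$ of $\zeta(s-r)$. Regrouping the two families and extracting the common factor $x^{\rho-k}/\zeta'(\rho)$ converts the sum $\sum_{\rho}\Gamma(k-\rho)\,\phi'(\rho)^{-1}x^{\rho-k}$ over the non-trivial zeros of $\phi$ into the sum $\sum_{\rho}\frac{x^{\rho-k}}{\zeta'(\rho)}\big[\frac{\Gamma(k-\rho)}{\zeta(\rho-r)}+\frac{x^r\Gamma(k-r-\rho)}{\zeta(r+\rho)}\big]$ over the non-trivial zeros of $\zeta$; since $\Im(\rho+r)=\Im(\rho)$, the bracketing condition \eqref{bracketing} on the zeros of $\phi$ is equivalent to the same condition on the zeros of $\zeta$.

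The substantive step, and the one I expect to be the main obstacle, is the explicit identification of $R_t$. By its definition, $-R_t$ collects the residues, at the trivial zeros of $\phi(k-s)$ in the strip $0\leq\Re(k-s)\leq k$, of the Mellin--Barnes integrand $\Gamma(s)\,x^{-s}/\phi(k-s)$ used to prove Theorem~\ref{General_Identity} (here with $A=1$, $B=0$). Now the trivial zeros of $\phi(w)=\zeta(w)\zeta(w-r)$ are the negative even integers (from $\zeta(w)$) together with the points $w=r-2m$, $m\geq1$ (from $\zeta(w-r)$); because $r$ is odd these two sets are disjoint, and they do not collide with the poles $w=1$, $w=r+1$ of $\phi$ in a way that raises the pole order: at $w=1$ a simple pole of $\zeta(w)$ meets a simple zero of $\zeta(w-r)$, so $\phi$ is analytic and nonzero there (one uses $\zeta'(-2m)\neq0$), while at $w=r+1$ the function $1/\phi$ has a simple zero. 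Intersecting with $0\leq\Re(w)\leq k$ and using $k\geq r+1$ leaves exactly the points $w=r-2m$ with $1\leq m\leq[\frac{r}{2}]$, that is, the simple poles $s=k-r+2m$ (all of positive real part, so $\Gamma$ contributes no extra pole) of the integrand. At each such point $\phi(k-s)$ has a simple zero, so $1/\phi(k-s)$ has residue $-1/(\zeta(r-2m)\zeta'(-2m))$, and hence the residue of the integrand is $-\Gamma(k-r+2m)\,x^{r-k-2m}/(\zeta(r-2m)\zeta'(-2m))$. Inserting the classical evaluation $\zeta'(-2m)=\frac{(-1)^m(2m)!}{2(2\pi)^{2m}}\zeta(2m+1)$, obtained by differentiating the functional equation of $\zeta$ at $s=-2m$, turns $-R_t$ into $\sum_{m=1}^{[\frac{r}{2}]}\frac{\Gamma(k-r+2m)}{\zeta(r-2m)\zeta(2m+1)}\frac{(-1)^m2(2\pi)^{2m}}{(2m)!}\,x^{r-k-2m}$, which is the middle line of the stated identity. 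Adding the three contributions completes the proof. Apart from this bookkeeping, the delicate points are the sign in the residue (controlled by the orientation convention hidden in the definition of $R_t$) and the verification that no trivial zero of $\phi$ coincides with a pole of $\phi$ in a way that destroys the simple-pole structure; everything else is a direct substitution into \eqref{main identity}.
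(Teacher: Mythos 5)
Your proposal is correct and follows essentially the same route as the paper: specialize Theorem \ref{General_Identity} with $A=1$, $B=0$, $c=\frac{1}{2\pi}$, $\delta=r+1$, $\nu=(-1)^{(r+1)/2}$, kill the ``$-1$'' term via the pole of $\phi$ at $r+1$, split the non-trivial-zero sum into the zeros of $\zeta(k-s)$ and of $\zeta(k-s-r)$ using $\phi'(\rho)=\zeta'(\rho)\zeta(\rho-r)$ and $\phi'(\rho+r)=\zeta(\rho+r)\zeta'(\rho)$, and evaluate $R_t$ at $s=k-r+2m$, $1\le m\le\lfloor r/2\rfloor$, via $\zeta'(-2m)=\frac{(-1)^m(2m)!\,\zeta(2m+1)}{2(2\pi)^{2m}}$. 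The only quibble is your phrase that ``$-R_t$ collects the residues'' (in the paper's convention $R_t$ itself is the residue sum from Cauchy's theorem, and $-R_t$ is what then appears in \eqref{main identity}); you flag this orientation issue yourself and your final formula agrees with the theorem.
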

A numerical verification of the identity is given in Table \ref{Table of main theorem}.

In particular,  letting $k=r+1$ and $\alpha \beta = 4 \pi^2$ in Theorem \ref{corollary for zeta(s) zeta(s-r)},  we get the following identity.  
\begin{corollary}\label{alpha-beta form for sigma}
Let $r$ be an odd positive integer and $\alpha,  \beta$ be positive real numbers with $\alpha \beta = 4 \pi^2$.  Then we have
\begin{align*}
\alpha^{\frac{r+1}{2}}&\sum_{n=1}^{\infty} \frac{\sigma_r^{-1}(n)}{n^{r+1}} \exp\left(-\frac{\alpha}{n}\right) - (-\beta)^{\frac{r+1}{2}}\sum_{n=1}^{\infty} \frac{\sigma_r^{-1}(n)}{n^{r+1}} \exp\left(-\frac{\beta}{n}\right) \nonumber \\ 
&=   2  \alpha^{\frac{r-1}{2}} \sum_{m=1}^{[\frac{r}{2}]} \frac{(-1)^m}{\zeta(r-2m) \zeta(2m+1) } \left( \frac{\beta}{2\pi}  \right)^{2m} \nonumber \\ 
& + \alpha^{\frac{r+1}{2}-k} \sum_{\rho} \frac{\alpha^{\rho}}{\zeta^{'}(\rho)} \bigg[\frac{\Gamma(r+1-\rho)}{\zeta(\rho -r)} + \frac{\alpha^r \Gamma(1-\rho)}{\zeta(r+\rho)}\bigg]. 
\end{align*} 
\end{corollary}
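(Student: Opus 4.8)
The statement is the specialization $k = r+1$, $x = \beta$ of Theorem~\ref{corollary for zeta(s) zeta(s-r)} (the borderline value $k = r+1$ is allowed by the hypothesis $k \geq r+1$ there), so the plan is simply to make that substitution, use $\alpha\beta = 4\pi^2$ to recast every resulting power of $\beta$ and of $2\pi$, and finally multiply the whole identity through by $(-\beta)^{(r+1)/2}$ and transpose one term. Since $r$ is an odd positive integer, $(r+1)/2$ is a nonnegative integer, so $(-\beta)^{(r+1)/2} = (-1)^{(r+1)/2}\beta^{(r+1)/2}$ and all the sign manipulations stay clean. No new convergence question arises: every term of the identity is merely multiplied by a fixed power of $\beta$, so the bracketing condition \eqref{bracketing} and the assumed simplicity of the non-trivial zeros of $\zeta(s)$ carry over verbatim from Theorem~\ref{corollary for zeta(s) zeta(s-r)}.

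Three short computations do the real work. First, the parameters of the confluent hypergeometric factor now coincide, and from its defining series $(r+1)_j/(r+1)_j = 1$, so ${}_1F_1\!\left(\begin{matrix} r+1 \\ r+1 \end{matrix}\,\big|\, z\right) = \sum_{j\geq 0} z^j/j! = e^z$; taking $z = -4\pi^2/(n\beta) = -\alpha/n$ turns the transformed Dirichlet series into $\sum_{n\geq 1}\sigma_r^{-1}(n)\,n^{-(r+1)}\exp(-\alpha/n)$. Second, the Gamma quotients collapse: $\Gamma(k)/\Gamma(r+1) = 1$, and $\Gamma(k-r+2m) = \Gamma(2m+1) = (2m)!$ cancels the $(2m)!$ already in the denominator of the finite $m$-sum. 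Third, after multiplying by $(-\beta)^{(r+1)/2}$ the prefactor $(-1)^{(r+1)/2}(2\pi)^{r+1}\beta^{-(r+1)}$ in front of that series becomes $(-1)^{r+1}(2\pi)^{r+1}\beta^{-(r+1)/2} = (4\pi^2/\beta)^{(r+1)/2} = \alpha^{(r+1)/2}$, using $(-1)^{r+1} = 1$ and $(2\pi)^{r+1} = (4\pi^2)^{(r+1)/2}$. Hence the first term on the right of Theorem~\ref{corollary for zeta(s) zeta(s-r)} becomes $\alpha^{(r+1)/2}\sum_{n\geq 1}\sigma_r^{-1}(n)\,n^{-(r+1)}\exp(-\alpha/n)$, and moving it across to sit against the scaled left-hand side $(-\beta)^{(r+1)/2}\sum_{n\geq 1}\sigma_r^{-1}(n)\,n^{-(r+1)}\exp(-\beta/n)$ produces precisely the symmetric difference in the statement of Corollary~\ref{alpha-beta form for sigma}.

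It then only remains to put the two surviving pieces — the finite sum over $m = 1,\dots,[r/2]$ and the sum over the non-trivial zeros $\rho$ of $\zeta(s)$ — into the stated form after the same multiplication by $(-\beta)^{(r+1)/2}$. For the $m$-sum one gathers the powers $(2\pi)^{2m}\,\beta^{r-k-2m}\,\beta^{(r+1)/2} = (2\pi)^{2m}\beta^{(r-1)/2-2m}$ and rewrites them through $\beta = 4\pi^2/\alpha$ in the normalization $\alpha^{(r-1)/2}(\beta/2\pi)^{2m}$; for the $\rho$-sum one regroups $\beta^{\rho-k}\beta^{(r+1)/2}$ together with the extra $\beta^{r}$ that accompanies the second Gamma factor and again converts via $\alpha\beta = 4\pi^2$. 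The only point demanding any attention is this bookkeeping of the interlocking powers of $2\pi$, $\alpha$ and $\beta$ and of the sign $(-1)^{(r+1)/2}$; there is no analytic obstacle, the corollary being a purely algebraic rearrangement of the already-proved identity of Theorem~\ref{corollary for zeta(s) zeta(s-r)}.
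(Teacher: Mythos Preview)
Your high-level plan is right --- this is indeed just the specialization $k=r+1$ of Theorem~\ref{corollary for zeta(s) zeta(s-r)}, and your simplifications ${}_1F_1\!\left(\begin{smallmatrix} r+1 \\ r+1 \end{smallmatrix}\big|\,z\right)=e^z$, $\Gamma(k)/\Gamma(r+1)=1$, $\Gamma(2m+1)=(2m)!$ are all correct. However, you substituted $x=\beta$, whereas the paper takes $x=\alpha$ and multiplies by $\alpha^{(r+1)/2}$. This is not a cosmetic difference: with your choice the finite $m$-sum and the $\rho$-sum emerge in powers of $\beta$, and your claim in the last paragraph that ``$(2\pi)^{2m}\beta^{(r-1)/2-2m}$ rewrites through $\beta=4\pi^2/\alpha$ as $\alpha^{(r-1)/2}(\beta/2\pi)^{2m}$'' is false. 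For $r=3$, $m=1$ (the first nontrivial case) your expression, together with the sign $-(-1)^{(r+1)/2}$ coming from the transposition and from $(-\beta)^{(r+1)/2}$, gives $-4\pi^2/\beta=-\alpha$, while the target $\alpha^{(r-1)/2}(\beta/2\pi)^{2m}=\alpha\beta^2/(4\pi^2)=\beta$; these are not equal. The $\rho$-sum fails even more visibly: trading $\beta^{\rho}$ for $\alpha$-powers via $\beta=4\pi^2/\alpha$ introduces an unwanted factor $(4\pi^2)^{\rho}$ and flips the sign of $\rho$ in the exponent, so you cannot reach $\alpha^{\rho}$ as in the stated corollary.

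The repair is immediate: set $x=\alpha$ (so $4\pi^2/x=\beta$) and multiply through by $\alpha^{(r+1)/2}$. Then the ${}_1F_1$-term becomes $(-\beta)^{(r+1)/2}\sum \sigma_r^{-1}(n)n^{-(r+1)}\exp(-\beta/n)$ and is moved to the left, while the $m$-sum and $\rho$-sum already sit in $\alpha$-powers; the identity $(2\pi/\alpha)^{2m}=(\beta/2\pi)^{2m}$ then gives the stated form directly, with no extra sign to track. That is precisely the paper's one-line proof.
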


The next result, for $L$-functions associated to cusp forms,  we state as a separate theorem as it might have a significant importance in the literature.  

\begin{theorem}\label{corollary for cusp form}
Let $L(f,s)$ be the $L$-function associated to a cusp form $f$ defined in \eqref{cusp form}. If we assume all the non-trivial zeros of $L(f,s)$ to be simple, then for $k \geq \frac{w+1}{2}$,  $x>0$, we have
\begin{align}
\sum_{n=1}^{\infty} \frac{\lambda_f^{-1}(n)}{n^k} \exp\left(-\frac{x}{n}\right)&= \frac{(2\pi)^{\omega}}{ x^{k + \frac{\omega -1}{2}}} \frac{\Gamma(k + \frac{\omega -1}{2})}{\Gamma(\omega)} \sum_{n=1}^{\infty} \frac{\lambda_f^{-1}(n)}{n^{\frac{\omega + 1}{2}}} {}_1F_{1} \left( \begin{matrix}
 k + \frac{\omega -1}{2}\\ \omega \end{matrix}  \Big|- \frac{4 \pi^2}{nx} \right)  \nonumber \\ 
 &+  \sum_{\rho} \frac{\Gamma(k-\rho) x^{-(k-\rho)}}{L^{'}(f,\rho)}, \label{main identity for cusp form}
\end{align}
where $\rho$ runs through non-trivial zeros of $L(f,s)$ and satisfy the bracketing condition \eqref{bracketing}.
\end{theorem}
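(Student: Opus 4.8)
The plan is to obtain \eqref{main identity for cusp form} as a direct specialization of Theorem~\ref{General_Identity}. By Example~\ref{Def cusp}, the $L$-function of a holomorphic Hecke eigenform $f$ of weight $\omega$ for $SL(2,\mathbb{Z})$ obeys the functional equation \eqref{fn eq cusp form}, which is exactly \eqref{modified functional equation for phi and psi} in the case $\phi(s)=\psi(s)=L(f,s)$, $a(n)=b(n)=\lambda_f(n)$ (so that $a^{-1}(n)=b^{-1}(n)=\lambda_f^{-1}(n)$), $A=1$, $B=\frac{\omega-1}{2}$, $c=\frac{1}{2\pi}$, $\delta=1$ and $\nu=1$. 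For these values the hypothesis $k\ge\delta+\frac{B}{A}$ of Theorem~\ref{General_Identity} reads $k\ge\frac{\omega+1}{2}$, which is the hypothesis at hand, and the non-trivial zeros of $L(f,s)$ are assumed simple; hence Theorem~\ref{General_Identity} applies and it only remains to substitute and bookkeep.

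Doing this: since $A=1$, $n^{1/A}=n$, so the left-hand side of \eqref{main identity} is $\sum_{n\ge 1}\lambda_f^{-1}(n)\,n^{-k}\exp(-x/n)$. On the right, $A\delta+2B=\omega$ and $Ak+B=k+\frac{\omega-1}{2}$, hence $c^{-(A\delta+2B)/A}=(2\pi)^{\omega}$, $\frac{\Gamma(Ak+B)}{\Gamma(A\delta+2B)}=\frac{\Gamma(k+(\omega-1)/2)}{\Gamma(\omega)}$ and $x^{Ak+B}=x^{k+(\omega-1)/2}$; moreover $\frac{A\delta+B}{A}=\frac{\omega+1}{2}$ and $(c^2n)^{1/A}=\frac{n}{4\pi^2}$, so the argument of the confluent hypergeometric factor is $-\frac{4\pi^2}{nx}$ and ${}_1F_1(Ak+B;A\delta+2B;\cdot)={}_1F_1\big(k+\frac{\omega-1}{2};\,\omega;\,\cdot\big)$; finally $A(k-\rho)=k-\rho$, $\phi'(\rho)=L'(f,\rho)$ and the prefactor $A$ of the sum over zeros equals $1$, turning the last term of \eqref{main identity} into $\sum_{\rho}\Gamma(k-\rho)\,L'(f,\rho)^{-1}\,x^{-(k-\rho)}$. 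Thus \eqref{main identity} reproduces every term of \eqref{main identity for cusp form}, together with two leftover pieces: the constant $-1$ inside the bracket and the residual term $-R_t$. The proof is finished once these are shown to vanish.

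The vanishing of $R_t$ is the genuinely cusp-form input. Since $f$ is a cusp form, $L(f,s)$ continues to an \emph{entire} function, and from $\Lambda(f,s)=\Lambda(f,1-s)$ its only zeros off the critical strip --- those forced by the poles of the gamma factor $\Gamma\!\big(s+\frac{\omega-1}{2}\big)$ --- lie at $s=-\frac{\omega-1}{2}-j$, $j\ge 0$, all of negative real part; hence no trivial zero of $\phi\big(k-\frac{s}{A}\big)=L(f,k-s)$ falls in the range $0\le\Re\big(k-\frac{s}{A}\big)\le Ak+B$ that governs $R_t$, and, as $L(f,s)$ has no pole, there is no pole contribution either, so $R_t=0$. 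For the constant $-1$ one must re-examine the Mellin--Barnes evaluation underlying Theorem~\ref{General_Identity}: that $-1$ is the $j=0$ term of the confluent series which is dropped precisely when $B=0$, where the contour separating the poles of $\Gamma(Aw+B)$ from those of $\Gamma(A(k-w))$ lies to the left of the pole of $\Gamma(Aw)$ at $w=0$; here $B=\frac{\omega-1}{2}>0$, the first pole of $\Gamma(Aw+B)$ sits at $w=-\frac{B}{A}<0$, and closing the contour to the left then recovers the full series $\sum_{j\ge 0}$, so no separate constant survives --- equivalently, for $B>0$ the bracket in \eqref{main identity} is simply ${}_1F_1(\cdots)$. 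Inserting $R_t=0$ and discarding the vanished constant yields \eqref{main identity for cusp form}. I expect this last point to be the main (indeed only) obstacle, as it is the one step where Theorem~\ref{General_Identity} cannot be quoted as a black box; everything else is mechanical, and the convergence of the sum over $\rho$ under the bracketing condition \eqref{bracketing} is inherited directly from Theorem~\ref{General_Identity}.
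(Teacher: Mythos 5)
Your specialization of the parameters ($A=1$, $B=\tfrac{\omega-1}{2}$, $c=\tfrac{1}{2\pi}$, $\delta=1$, $\nu=1$) matches the paper, but the way you dispose of the two leftover pieces is wrong on both counts, and the two errors happen to cancel. First, $R_t$ does \emph{not} vanish. Since $\Lambda(f,s)=(2\pi)^{-s}\Gamma\big(s+\tfrac{\omega-1}{2}\big)L(f,s)$ is entire, $L(f,s)$ has trivial zeros at $s=-\tfrac{\omega-1}{2}-j$, $j\ge 0$, and the one at $s=-\tfrac{\omega-1}{2}$ corresponds to the point $s=k+\tfrac{\omega-1}{2}=Ak+B$ of the integration variable, which lies strictly inside the contour used in the proof of Theorem \ref{General_Identity}: its right edge sits at $\Re(s)=\lambda\in(Ak+B,Ak+B+\epsilon)$, so this pole of $1/L(f,k-s)$ is crossed. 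You were misled by the literal wording ``$0\le\Re(k-\tfrac{s}{A})\le Ak+B$'' in the statement of Theorem \ref{General_Identity}, which is a misprint; the condition actually enforced by the contour is $0\le\Re(s)\le Ak+B$, equivalently $-\tfrac{B}{A}\le\Re(k-\tfrac{s}{A})\le k$, under which the trivial zero at $-\tfrac{\omega-1}{2}$ does contribute. Using the functional equation one finds
\begin{align*}
R_t=-\frac{(2\pi)^{\omega}}{x^{k+\frac{\omega-1}{2}}}\,\frac{\Gamma\big(k+\frac{\omega-1}{2}\big)}{\Gamma(\omega)}\,\frac{1}{L\big(f,\frac{\omega+1}{2}\big)}.
\end{align*}

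Second, the ``$-1$'' inside the bracket of \eqref{main identity} does not disappear when $B>0$. That term is the residue of $\Gamma(Ak+B-s)$ at $s=Ak+B$, picked up when the contour is moved from $\Re(s)=\lambda>Ak+B$ to $\Re(s)=\beta\in(0,Ak+B)$ so that the Barnes integral defining the Meijer $G$-function applies; this pole is crossed for every $B\ge0$, and your argument about ``the first pole of $\Gamma(Aw+B)$ at $w=-B/A$'' refers to gamma factors that do not appear in the integral $V_{k,A}$ (whose factors are $\Gamma(s)$ and $\Gamma(Ak+B-s)$). What actually happens is a cancellation: since $1/L\big(f,\tfrac{\omega+1}{2}\big)=\sum_{n\ge1}\lambda_f^{-1}(n)\,n^{-\frac{\omega+1}{2}}$, the term $-R_t$ is exactly the negative of the contribution of the ``$-1$'', and the two annihilate each other, leaving \eqref{main identity for cusp form}. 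So your final identity is correct, but only because two unjustified deletions offset one another; a correct proof must keep both terms and exhibit this cancellation.
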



By substituting $k=\frac{\omega + 1}{2}$ in Theorem \ref{corollary for cusp form},  we obtain an elegant modular relation depicted as follows. 
\begin{corollary}\label{alpha beta form for cusp forms}
 Assuming all the hypothesis in Theorem \ref{corollary for cusp form},  for $\alpha, \beta > 0$ with $\alpha \beta = 4\pi^2$,  we have
\begin{align*}
\alpha^{\frac{\omega}{2}}\sum_{n=1}^{\infty} \frac{\lambda_f^{-1}(n)}{n^{\frac{\omega + 1}{2}}} \exp\left(-\frac{\alpha}{n}\right)&-\beta^{\frac{\omega}{2}}\sum_{n=1}^{\infty} \frac{\lambda_f^{-1}(n)}{n^{\frac{\omega + 1}{2}}} \exp\left(-\frac{\beta}{n}\right) = - \frac{1}{\sqrt{\beta}}  \sum_{\rho} \frac{\Gamma(\frac{\omega + 1}{2}-\rho) \beta^\rho}{L^{'}(f,\rho)}.
\end{align*}
\end{corollary}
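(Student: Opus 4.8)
The plan is to read off Corollary \ref{alpha beta form for cusp forms} from Theorem \ref{corollary for cusp form} by specializing to the boundary value $k = \frac{\omega+1}{2}$ and then imposing the reciprocity $\alpha\beta = 4\pi^2$. The single observation that makes the confluent hypergeometric sum on the right of \eqref{main identity for cusp form} collapse into a plain exponential sum is Kummer's degeneracy ${}_1F_1(a;a;z) = e^{z}$.

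First I would put $k = \frac{\omega+1}{2}$ in \eqref{main identity for cusp form}. This is permissible since Theorem \ref{corollary for cusp form} holds for every $k \geq \frac{\omega+1}{2}$, the value $\frac{\omega+1}{2}$ being exactly the threshold $\delta + B/A$ inherited from Theorem \ref{General_Identity} with $\delta = 1$, $A = 1$, $B = \frac{\omega-1}{2}$. With this choice $k + \frac{\omega-1}{2} = \omega$, so the gamma quotient $\Gamma\!\left(k+\frac{\omega-1}{2}\right)/\Gamma(\omega)$ equals $1$ and ${}_1F_1\!\left(\omega;\omega;-\tfrac{4\pi^2}{nx}\right) = \exp\!\left(-\tfrac{4\pi^2}{nx}\right)$. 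Writing $x = \beta$, identity \eqref{main identity for cusp form} thus becomes
\begin{align*}
\sum_{n=1}^{\infty} \frac{\lambda_f^{-1}(n)}{n^{\frac{\omega+1}{2}}} \exp\left(-\frac{\beta}{n}\right) = \frac{(2\pi)^{\omega}}{\beta^{\omega}} \sum_{n=1}^{\infty} \frac{\lambda_f^{-1}(n)}{n^{\frac{\omega+1}{2}}} \exp\left(-\frac{4\pi^2}{n\beta}\right) + \sum_{\rho} \frac{\Gamma\!\left(\frac{\omega+1}{2}-\rho\right)}{L'(f,\rho)}\, \beta^{\rho - \frac{\omega+1}{2}}.
\end{align*}

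Next I would feed in $\alpha\beta = 4\pi^2$, which replaces $\frac{4\pi^2}{n\beta}$ by $\frac{\alpha}{n}$ and turns $\frac{(2\pi)^\omega}{\beta^\omega} = \frac{(\alpha\beta)^{\omega/2}}{\beta^\omega}$ into $(\alpha/\beta)^{\omega/2}$; multiplying the whole identity by $\beta^{\omega/2}$ then gives the two exponential sums the symmetric weights $\alpha^{\omega/2}$ and $\beta^{\omega/2}$, while in the sum over $\rho$ the power of $\beta$ becomes $\beta^{\omega/2}\cdot\beta^{\rho - \frac{\omega+1}{2}} = \beta^{\rho - 1/2} = \beta^{\rho}/\sqrt{\beta}$. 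Transposing the $\alpha$- and $\beta$-sums to the left-hand side produces precisely the asserted identity, the overall $-1/\sqrt{\beta}$ appearing in front of the sum over the non-trivial zeros.

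There is no genuine obstacle here beyond bookkeeping, and two small checks deserve a sentence. The first is that the hypothesis $k \geq \frac{\omega+1}{2}$ of Theorem \ref{corollary for cusp form} is met with equality at the chosen $k$, so the theorem applies. The second is that multiplication by $\beta^{\omega/2}$ and the final rearrangement do not affect the conditional, bracketwise convergence of the series over $\rho$: the bracketing condition \eqref{bracketing} is phrased purely in terms of $\Im(\rho)$ and is untouched by these manipulations. I would also remark that for $\omega = 12$ and $f = \Delta$ this is a genuine cusp-form analogue of the Ramanujan-Hardy-Littlewood identity \eqref{Hardy-Littlewood symmetric}.
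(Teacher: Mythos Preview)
Your proposal is correct and follows exactly the same approach as the paper, which simply says: set $k=\frac{\omega+1}{2}$, $x=\beta$, $\alpha\beta=4\pi^2$ in Theorem \ref{corollary for cusp form} and multiply by $\beta^{\omega/2}$. You have merely unpacked the one-line argument with the explicit Kummer degeneracy ${}_1F_1(\omega;\omega;z)=e^z$ and the power-of-$\beta$ bookkeeping, all of which is accurate.
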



The next corollary gives a modular relation involving Ramanujan tau function.
\begin{corollary}\label{corollary for Ramanujan tau function}
Let $\alpha$ and $\beta$ be two positive real numbers with $\alpha \beta = 4\pi^2$. Then we have 
\begin{align}\label{identity for Ramanujan tau}
 \alpha^6 \sum_{n=1}^{\infty} \frac{\tau_0^{-1}(n)}{n^{\frac{13}{2}}}  \exp\left(-\frac{\alpha}{n}\right) &-  \beta^6 \sum_{n=1}^{\infty} \frac{\tau_0^{-1}(n)}{n^{\frac{13}{2}}}  \exp\left(-\frac{\beta}{n}\right) = - \frac{1}{\sqrt{\beta}} \sum_{\rho} \frac{\Gamma(\frac{13}{2}-\rho) \beta^{\rho}}{L^{'}( \Delta , \rho)},
\end{align}
 where $\rho$ runs through non-trivial zeros of $L(\Delta,s)$.  
\end{corollary}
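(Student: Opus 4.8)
The plan is to recognize this corollary as the specialization of Corollary \ref{alpha beta form for cusp forms} to the particular cusp form $f = \Delta$, the Ramanujan Delta function of weight $\omega = 12$. So the strategy is essentially bookkeeping: verify that $\Delta$ fits the hypotheses of Theorem \ref{corollary for cusp form} / Corollary \ref{alpha beta form for cusp forms}, then substitute $\omega = 12$ and simplify the resulting exponents.

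First I would recall from Example \ref{def tau } that the normalized coefficients $\tau_0(n) = \tau(n) n^{-11/2}$ generate $L(\Delta, s) = \sum_{n \geq 1} \tau_0(n) n^{-s}$, which extends to an entire function and satisfies the functional equation $(2\pi)^{-s}\Gamma(s + \tfrac{11}{2}) L(\Delta, s) = (2\pi)^{-(1-s)}\Gamma(\tfrac{13}{2} - s) L(\Delta, 1-s)$. Comparing with \eqref{modified functional equation for phi and psi}, this is exactly the cusp-form setting of Example \ref{Def cusp} with $a(n) = b(n) = \lambda_f(n) = \tau_0(n)$, $c = \tfrac{1}{2\pi}$, $A = 1$, $B = \tfrac{\omega - 1}{2} = \tfrac{11}{2}$, $\delta = 1$, $\nu = 1$, and weight $\omega = 12$. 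Since $L(\Delta, s)$ is a holomorphic cusp form $L$-function to which Theorem \ref{corollary for cusp form} applies, and since $\lambda_f^{-1}(n)$ in that notation is precisely the Dirichlet inverse $\tau_0^{-1}(n)$ of $\tau_0(n)$, Corollary \ref{alpha beta form for cusp forms} is directly applicable.

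Next I would simply set $\omega = 12$ in the displayed identity of Corollary \ref{alpha beta form for cusp forms}. The exponent $\tfrac{\omega}{2}$ becomes $6$, giving the prefactors $\alpha^6$ and $\beta^6$; the summation index $\tfrac{\omega + 1}{2}$ becomes $\tfrac{13}{2}$, giving $\sum_{n \geq 1} \tau_0^{-1}(n) n^{-13/2} \exp(-\alpha/n)$ and the analogous $\beta$-sum; and on the right-hand side $\Gamma(\tfrac{\omega + 1}{2} - \rho)$ becomes $\Gamma(\tfrac{13}{2} - \rho)$ while $L'(f, \rho)$ becomes $L'(\Delta, \rho)$. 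This produces exactly \eqref{identity for Ramanujan tau}, with $\rho$ running over the non-trivial zeros of $L(\Delta, s)$, subject to the bracketing condition \eqref{bracketing} inherited from Theorem \ref{corollary for cusp form}.

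There is essentially no analytic obstacle here, since all the work has been done in Theorem \ref{corollary for cusp form} and Corollary \ref{alpha beta form for cusp forms}; the only point requiring a word of care is the simplicity of the non-trivial zeros of $L(\Delta, s)$, which is a hypothesis in Theorem \ref{corollary for cusp form} and which we therefore carry over implicitly (it is consistent with all numerical evidence and with the expected behaviour of this $L$-function). With that understood, the corollary follows immediately. $\Box$
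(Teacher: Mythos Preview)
Your proposal is correct and follows exactly the paper's own approach: the paper's proof simply observes that Corollary~\ref{corollary for Ramanujan tau function} is an immediate specialization of Corollary~\ref{alpha beta form for cusp forms} upon setting $\lambda_f(n) = \tau_0(n)$ and $\omega = 12$. Your write-up is somewhat more detailed in verifying the hypotheses, but the substance is identical.
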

It is interesting to note that the above identity has already been obtained by Dixit,  Roy and Zaharescu \cite[Corollary 1.2]{DRZ}.  One can also obtain Corollary \ref{alpha beta form for cusp forms}  by letting $z \rightarrow 0$ in Theorem 1.1 of \cite{DRZ}.  However,  our Theorem \ref{corollary for cusp form} is different from Theorem 1.1 of Dixit et.  al \cite{DRZ}.  

\subsection{Equivalent criteria for the grand Riemann hypothesis}
Motivated from the equivalent criteria for the Riemann hypothesis of $\zeta(s)$ due to Hardy-Littlewood and Riesz, 
in this subsection,  we state equivalent criteria for the Riemann hypothesis for a general class of $L$-functions,  defined in \eqref{def nyc L function} and satisfying Littlewood's bound \eqref{merten bound general}.  In particular,  we give equivalent criteria for the Riemann hypothesis for $L$-functions lying in the Chadrasekharan-Narasimhan class that may obey Riemann hypothesis.  

\begin{theorem}\label{conjecture is true}
Conjecture \ref{conjecture} holds true.
\end{theorem}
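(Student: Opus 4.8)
The plan is to prove both implications of Conjecture~\ref{conjecture} through the Mellin correspondence between $P_{k,f,\ell}$ and $1/L(f,s)$, following the classical pattern of Riesz~\cite{Riesz} and Hardy--Littlewood~\cite{HL-1916} and the Dirichlet character version in~\cite{GM-character}. The function of the standing hypothesis~\eqref{merten bound general} is to carry these arguments all the way to the line $\Re(s)=\tfrac12$: writing $M_f(t):=\sum_{n\le t}\mu_f(n)$, partial summation recasts~\eqref{merten bound general} as the statement that $1/L(f,s)=s\int_{1}^{\infty}M_f(t)\,t^{-s-1}\,dt$ is holomorphic on $\Re(s)>\tfrac12$, and this is the analytic engine of everything below.

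For the implication GRH $\Rightarrow$~\eqref{conjecture main bound} I would argue directly from~\eqref{merten bound general}. Abel summation applied to $P_{k,f,\ell}(x)=\sum_{n\ge1}\mu_f(n)\,n^{-k}e^{-x/n^{\ell}}$ gives
\begin{align*}
P_{k,f,\ell}(x)=\int_{1}^{\infty}M_f(t)\,t^{-k-1}e^{-x/t^{\ell}}\bigl(k-\ell x\,t^{-\ell}\bigr)\,dt ,
\end{align*}
and substituting $u=x/t^{\ell}$ together with $|M_f(t)|\ll_{\epsilon}t^{1/2+\epsilon}$ expresses the right-hand side through incomplete Gamma integrals $x^{(1-\beta)/\ell}\int_{0}^{x}u^{(\beta-1)/\ell-1}e^{-u}\,du$ with $\beta\in\{k+\tfrac12-\epsilon,\ k+\tfrac12+\ell-\epsilon\}$; since $k\ge1$ forces $\beta>1$ these are bounded in $x$, and one obtains $P_{k,f,\ell}(x)\ll_{\epsilon,k,\ell}x^{-k/\ell+1/(2\ell)+\epsilon}$. (As~\eqref{merten bound general} is assumed throughout, this direction is in fact unconditional; one may alternatively read it off GRH by shifting the contour in the Mellin--Barnes formula below past the zeros of $L(f,k-\ell w)$, which under GRH lie on $\Re(w)=\tfrac{2k-1}{2\ell}$, summing the resulting residues---each of size $\ll x^{-k/\ell+1/(2\ell)}$ and summable after bracketing as in~\eqref{bracketing}---and estimating the shifted integral, as in~\cite{GM-character}.)

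The substantive implication is~\eqref{conjecture main bound} $\Rightarrow$ GRH. Here I would introduce $F(w):=\int_{0}^{\infty}P_{k,f,\ell}(x)\,x^{w-1}\,dx$. Since $P_{k,f,\ell}(x)\to\sum_{n}\mu_f(n)\,n^{-k}$ as $x\to0^{+}$, a finite limit because $\sum_{n}\mu_f(n)\,n^{-1}$ converges, the part of this integral over $(0,1)$ converges for $\Re(w)>0$; and the hypothesis~\eqref{conjecture main bound} makes the part over $(1,\infty)$ converge for $\Re(w)<\tfrac{2k-1}{2\ell}$, so $F$ is holomorphic on the strip $0<\Re(w)<\tfrac{2k-1}{2\ell}$. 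Feeding the above integral representation of $P_{k,f,\ell}$ into $F$, interchanging the two integrals (permissible on the same strip since $|M_f(t)|\ll t^{1/2+\epsilon}$ makes the double integral absolutely convergent), using $\int_{0}^{\infty}e^{-x/t^{\ell}}x^{w-1}\,dx=\Gamma(w)\,t^{\ell w}$, and collapsing $(k-\ell w)\int_{1}^{\infty}M_f(t)\,t^{-(k-\ell w)-1}\,dt=1/L(f,k-\ell w)$ then yields
\begin{align*}
F(w)=\frac{\Gamma(w)}{L(f,k-\ell w)}\qquad\text{on}\quad 0<\Re(w)<\tfrac{2k-1}{2\ell}.
\end{align*}
Since $\Gamma(w)$ is finite and nonzero for $\Re(w)>0$, holomorphy of $F$ on this strip forces $L(f,k-\ell w)\neq0$ there, i.e.\ $L(f,s)\neq0$ for $\tfrac12<\Re(s)<k$. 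Together with the non-vanishing of $L(f,s)$ for $\Re(s)\ge1$ supplied by the Euler product in~\eqref{def nyc L function} and the symmetry given by the functional equation (so $\Re(s)<\tfrac12$ is zero-free as well), this places every non-trivial zero of $L(f,s)$ on $\Re(s)=\tfrac12$, which is GRH. Combined with the first step, Conjecture~\ref{conjecture}, and hence Theorem~\ref{conjecture is true}, follows.

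The main obstacle will be the analytic bookkeeping at the edge $\Re(s)=\tfrac12$: one must verify that~\eqref{merten bound general} genuinely licenses the closed form for $F(w)$ \emph{uniformly up to} the line $\Re(w)=\tfrac{2k-1}{2\ell}$, which is why the argument is routed through the partial summation integral $\int_{1}^{\infty}M_f(t)\,t^{-s-1}\,dt$ rather than the Dirichlet series $\sum_n\mu_f(n)\,n^{-s}$---a distinction that is essential in the borderline case $k=1$, where $1/L(f,s)$ has no half-plane of absolute convergence reaching $\Re(s)>\tfrac12$---and one must supply the standard growth bounds for $\Gamma$ and for $1/L(f,\cdot)$ in vertical strips (Stirling, and, where GRH is used, a Phragm\'en--Lindel\"of estimate) that justify the Mellin inversion and any contour shifts. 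A possible pole of $L(f,s)$ (such as that of $\zeta(s)$ at $s=1$) contributes only a zero, not a pole, of $F(w)$, and the trivial zeros lie outside the relevant strip, so neither obstructs the argument.
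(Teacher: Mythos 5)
Your proof is correct in outline and follows the same two\textendash part architecture as the paper: the forward implication via Littlewood's bound \eqref{merten bound general} and partial summation, and the converse via the Mellin identity $\int_0^\infty x^{-s-1}P_{k,f,\ell}(x)\,\mathrm{d}x=\Gamma(-s)/L(f,\ell s+k)$, analytic continuation of the integral using \eqref{conjecture main bound}, the non\textendash vanishing of $\Gamma$, and the functional equation. The differences are in execution. In the forward direction you perform a single continuous Abel summation and reduce to incomplete Gamma integrals, whereas the paper splits the sum at $m\sim x^{1-\epsilon}$ into $Q_1+Q_2$, kills $Q_1$ by exponential decay and bounds $Q_2$ by a discrete Abel summation plus the mean value theorem; your route is shorter and yields the same exponent. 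The one point that needs repair is the converse: you justify the interchange of integrals in the derivation of $F(w)=\Gamma(w)/L(f,k-\ell w)$ by invoking $|M_f(t)|\ll t^{1/2+\epsilon}$, i.e.\ by \eqref{merten bound general}. Under the literal wording of Conjecture \ref{conjecture} this is permitted, but it renders the converse vacuous: as you yourself observe, \eqref{merten bound general} already shows that $s\int_1^\infty M_f(t)t^{-s-1}\,\mathrm{d}t$ continues $1/L(f,s)$ holomorphically to $\Re(s)>\tfrac12$, which (with the functional equation) already places all non\textendash trivial zeros on the critical line, so there would be nothing left to prove. The paper's Lemma \ref{main lemma for converse part} avoids this: it establishes the Mellin identity by term\textendash by\textendash term integration of the exponential series, valid in the half\textendash plane $\Re(s)>\frac{1-k}{\ell}$ where $L(f,k+\ell s)$ converges absolutely and no zero\textendash free information is presupposed, and only afterwards uses the assumed bound \eqref{conjecture main bound} to continue the integral to $\Re(s)>\frac{1}{2\ell}-\frac{k}{\ell}$. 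You should derive your identity for $F(w)$ in the same way (a short dominated\textendash convergence argument, which also handles the case $k=1$ with a pole of $L(f,s)$ at $s=1$ via $e^{-x}-1$), so that the converse genuinely uses only the hypothesis \eqref{conjecture main bound}.
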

As we know that the  Epstein zeta function $\zeta_{id}(2,s)$ and $L$-functions $L(f,s)$ associated to Hecke eigenforms are particular examples of nice $L$-functions that are lying in the Chadrasekharan-Narasimhan class.
Thus,  as an immediate implication,  we have the following results.  
\begin{corollary}\label{equivalent criteria for RH  for epstein}
Let $\zeta_{id}(2,s)$ be the Epstein zeta function defined in Example \ref{r_b(n)}.  For $k \geq 1,  \ell > 0$,  the Riemann hypothesis for $\zeta_{id}(2,s)$ is equivalent to 
\begin{align*}
\sum_{n=1}^{\infty} \frac{r_2^{-1}(n)}{n^k} \exp\left(-\frac{x}{n^{\ell}}\right) = O_{\epsilon, k, \ell}\left(x^{-\frac{k}{\ell} + \frac{1}{2\ell}+\epsilon}\right),
\end{align*}
for any $\epsilon > 0$, as $x \rightarrow \infty$.
\end{corollary}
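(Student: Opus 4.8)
The plan is to deduce the corollary from Theorem \ref{conjecture is true} (which asserts that Conjecture \ref{conjecture} holds), by checking that the Epstein zeta function $\zeta_{id}(2,s)$ is a ``nice'' $L$-function in the sense of \eqref{def nyc L function} that satisfies Littlewood's bound \eqref{merten bound general}. The workhorse is the factorization \eqref{Epstein in terms of zeta}, namely $\zeta_{id}(2,s) = 4\zeta(s)L(s,\chi_{4,2})$, together with the fact (Example \ref{r_b(n)}) that $\zeta_{id}(2,s)$ sits in the Chandrasekharan--Narasimhan class with $\delta = 1$, $A = 1$, $B = 0$. First I would record the consequences of this factorization: the Dirichlet series $\sum_{n\ge 1} r_2(n) n^{-s}$ converges for $\Re(s) > 1$; since $\zeta(s)$ and $L(s,\chi_{4,2})$ are zero-free there, its reciprocal $1/\zeta_{id}(2,s) = \sum_{n\ge 1} r_2^{-1}(n) n^{-s}$ is, for $\Re(s)>1$, an absolutely convergent Dirichlet series with $r_2^{-1} = \tfrac14\,\mu * (\chi_{4,2}\mu)$; and $\sum_{n\ge 1} r_2^{-1}(n)/n$ converges, by the classical zero-free region for $\zeta$ and for $L(\cdot,\chi_{4,2})$ near $\Re(s)=1$, exactly as $\sum_n \mu(n)/n$ converges as a consequence of the prime number theorem. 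Finally, because $\zeta(s)$ and $L(s,\chi_{4,2})$ share no zeros, the non-trivial zeros of $\zeta_{id}(2,s)$ are precisely those of $\zeta(s)$ together with those of $L(s,\chi_{4,2})$, so the Riemann hypothesis for $\zeta_{id}(2,s)$ is the conjunction of RH for $\zeta$ and GRH for $L(\cdot,\chi_{4,2})$.

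Next I would verify that $\zeta_{id}(2,s)$ satisfies \eqref{merten bound general}, that is, that RH for $\zeta_{id}(2,s)$ is equivalent to $\sum_{n\le x} r_2^{-1}(n) \ll_{\epsilon} x^{1/2+\epsilon}$. For the forward implication, RH for $\zeta_{id}(2,s)$ makes $1/\zeta_{id}(2,s)$ holomorphic in $\Re(s) > 1/2$, and the conditional bounds $1/\zeta(\sigma+it) \ll_{\sigma,\epsilon} |t|^{\epsilon}$ and $1/L(\sigma+it,\chi_{4,2}) \ll_{\sigma,\epsilon} |t|^{\epsilon}$ for $\sigma > 1/2$ (standard consequences of RH and GRH) let one run a truncated Perron formula and shift the contour to $\Re(s) = 1/2 + \epsilon$, obtaining the desired bound on $\sum_{n\le x} r_2^{-1}(n)$. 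For the converse, if that bound holds then partial summation shows $\sum_{n\ge 1} r_2^{-1}(n) n^{-s}$ converges for $\Re(s) > 1/2$, so (by analytic continuation from $\Re(s)>1$) $\tfrac14/(\zeta(s)L(s,\chi_{4,2}))$ is holomorphic in $\Re(s) > 1/2$; hence $\zeta(s)$ and $L(s,\chi_{4,2})$ are zero-free in the half-plane $\Re(s) > 1/2$, and the functional equation \eqref{modified functional equation for phi and psi} (as instantiated in Example \ref{r_b(n)}) forces the remaining non-trivial zeros onto $\Re(s) = 1/2$. This is Littlewood's classical equivalence, applied to the two factors $\zeta$ and $L(\cdot,\chi_{4,2})$.

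With these verifications in hand, $\zeta_{id}(2,s)$ meets every hypothesis of Conjecture \ref{conjecture} for all $k \ge 1$ and $\ell > 0$, so Theorem \ref{conjecture is true} immediately yields the asserted equivalence between RH for $\zeta_{id}(2,s)$ and the bound $\sum_{n\ge 1} r_2^{-1}(n) n^{-k}\exp(-x/n^{\ell}) = O_{\epsilon,k,\ell}(x^{-k/\ell + 1/(2\ell) + \epsilon})$ as $x\to\infty$. The only step that is not a direct substitution is the bookkeeping around Littlewood's bound: one must make sure the conditional estimates for $1/\zeta$ and $1/L(\cdot,\chi_{4,2})$ combine cleanly enough for the Perron argument in the forward direction, and that the converse argument via the functional equation genuinely delivers the full Riemann hypothesis for $\zeta_{id}(2,s)$ rather than merely a zero-free half-plane; both are routine but worth spelling out, and everything else follows mechanically from Theorem \ref{conjecture is true}.
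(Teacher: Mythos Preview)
Your proposal is correct and follows the same approach as the paper: deduce the corollary from Theorem \ref{conjecture is true} by observing that $\zeta_{id}(2,s)$ is a ``nice'' $L$-function satisfying the hypotheses of Conjecture \ref{conjecture}. The paper treats this corollary as an immediate implication without spelling out any verification, whereas you have (quite reasonably) filled in the check of Littlewood's equivalence \eqref{merten bound general} via the factorization $\zeta_{id}(2,s)=4\zeta(s)L(s,\chi_{4,2})$; this extra care is warranted but does not represent a different route.
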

\begin{corollary}\label{equivalent criteria for RH  for hecke}
Let $f$ be a holomorphic Hecke eigenform of weight $\omega$ over the full modular group $SL_2(\mathbb{Z})$ and $L(f,s)$ be the $L$-function defined in \eqref{cusp form}.  For $k \geq \frac{\omega+1}{2},  \ell >0$,   the Riemann hypothesis for $L(f,s)$ is equivalent to the following bound:
\begin{align*}
\sum_{n=1}^{\infty} \frac{\lambda_f^{-1}(n)}{n^k} \exp\left(-\frac{x}{n^{\ell}}\right) = O_{\epsilon, k,f, \ell}\left(x^{-\frac{k}{\ell} + \frac{1}{2\ell}+\epsilon}\right),
\end{align*}
for any $\epsilon > 0$ as $x \rightarrow \infty$.
\end{corollary}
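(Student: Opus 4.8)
The plan is to deduce this statement as a direct specialization of Theorem \ref{conjecture is true} (equivalently, Conjecture \ref{conjecture}), so the real work is to verify that the data $L(f,s)=\sum_{n\ge1}\lambda_f(n)n^{-s}$ attached to a holomorphic Hecke eigenform $f$ of weight $\omega$ for $SL_2(\mathbb{Z})$ is an admissible ``nice'' $L$-function in the sense of \eqref{def nyc L function}--\eqref{merten bound general}. First I would record that $L(f,s)$ has an Euler product \eqref{cusp form}, so its Dirichlet inverse is $1/L(f,s)=\sum_{n\ge1}\lambda_f^{-1}(n)n^{-s}$ for $\Re(s)>1$, with $\lambda_f^{-1}$ supported on squarefree numbers and multiplicative; here $\lambda_f^{-1}=\mu_f$ in the notation of \eqref{inverse of L(f,s)}. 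Second, I would check the convergence of $\sum_{n\ge1}\lambda_f^{-1}(n)n^{-1}$: by Deligne's bound $|\lambda_f(n)|\le d(n)$ one has $|\lambda_f^{-1}(n)|\le d(n)$ as well, and $1/L(f,s)$ is holomorphic and nonzero on $\Re(s)\ge1$ (standard non-vanishing on the edge of the critical strip for a cuspidal automorphic $L$-function), so by a Tauberian/partial-summation argument the series $\sum \lambda_f^{-1}(n)n^{-1}$ converges to $1/L(f,1)$.

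Third, and this is the analytic heart of the verification, I would check that $L(f,s)$ satisfies Littlewood's bound \eqref{merten bound general}, i.e. $\sum_{n\le x}\lambda_f^{-1}(n)\ll_{\epsilon,f}x^{1/2+\epsilon}$ is equivalent to the Riemann hypothesis for $L(f,s)$. This is the analogue of the classical equivalence between RH for $\zeta$ and $M(x)=\sum_{n\le x}\mu(n)\ll x^{1/2+\epsilon}$: one side follows by a Perron-type contour integral of $x^s/(sL(f,s))$ shifted to $\Re(s)=\tfrac12+\epsilon$, using RH to control the zeros and a convexity/subconvexity bound for $1/L(f,s)$ in vertical strips; the converse follows because $\sum_{n\le x}\lambda_f^{-1}(n)\ll x^{1/2+\epsilon}$ forces $1/L(f,s)$ to be analytic in $\Re(s)>\tfrac12$, hence $L(f,s)\neq0$ there, hence RH. I expect this to be the main obstacle, since it requires the standard but nontrivial analytic input (functional equation \eqref{fn eq cusp form}, Phragmén--Lindelöf, and edge non-vanishing); if the paper has already carried out this verification for general nice $L$-functions inside the proof of Theorem \ref{conjecture is true} then I would simply cite that and note that $L(f,s)$ meets its hypotheses.

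Finally, with these checks in place, the corollary is immediate: apply Theorem \ref{conjecture is true} to $L(f,s)$ with $\mu_f=\lambda_f^{-1}$, noting that the grand Riemann hypothesis for $L(f,s)$ is exactly the assertion that all non-trivial zeros lie on $\Re(s)=\tfrac12$ (Example \ref{Def cusp}), and that the range $k\ge\tfrac{\omega+1}{2}$ is consistent with the admissibility threshold (it matches $k\ge\delta+B/A=1+\tfrac{\omega-1}{2}=\tfrac{\omega+1}{2}$ coming from the functional-equation parameters $A=1$, $B=\tfrac{\omega-1}{2}$, $\delta=1$ recorded in Example \ref{Def cusp}, which is the range in which the companion identity \eqref{main identity for cusp form} of Theorem \ref{corollary for cusp form} is valid). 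Substituting the bound from \eqref{conjecture main bound} with these parameters gives exactly $\sum_{n\ge1}\lambda_f^{-1}(n)n^{-k}\exp(-x/n^{\ell})=O_{\epsilon,k,f,\ell}(x^{-k/\ell+1/(2\ell)+\epsilon})$ as $x\to\infty$, and conversely this bound implies the grand Riemann hypothesis by the same theorem, completing the proof.
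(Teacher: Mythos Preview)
Your proposal is correct and matches the paper's approach: the paper states this corollary as an immediate consequence of Theorem \ref{conjecture is true} applied to the $L$-function attached to a Hecke eigenform, without giving a separate proof, and your plan is precisely to carry out that specialization while checking the standing hypotheses (Euler product, convergence of $\sum_n \lambda_f^{-1}(n)n^{-1}$, and the Littlewood-type equivalence \eqref{merten bound general} for $L(f,s)$). The extra verifications you sketch are not spelled out in the paper but are exactly the background facts one needs to invoke, and your identification of the range $k\ge\tfrac{\omega+1}{2}$ with the threshold $\delta+B/A$ from Example \ref{Def cusp} is apt.
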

In the next result, we state a new equivalent criteria for the Riemann hypothesis of $\zeta(s)$. This criteria significantly differ  from the previous known equivalent criteria for RH due to Hardy-Littlewood and Riesz.  The heuristic of this bound comes from the identity mentioned in Theorem \ref{corollary for zeta(s) zeta(s-r)}.

\begin{theorem}\label{Equivalent criteria for RH by sigma}
Let $r$ be a non-negative integer and $k \geq r+1,  \ell >0$.   The Riemann hypothesis for $\zeta(s)$ is equivalent to the bound 
\begin{align}\label{bound for eq criteria for sigma}
P_{k,\sigma,\ell}(x) := \sum_{n=1}^{\infty}\frac{\sigma_r^{-1}(n)}{n^k} \exp\left(-\frac{x}{n^{\ell}}\right) = O_{\epsilon, k,r, \ell}\left(x^{-\frac{k}{\ell} +\frac{1+2r}{2 \ell}+\epsilon}\right),
\end{align}
for any $\epsilon > 0$, as $x \rightarrow \infty$.
\end{theorem}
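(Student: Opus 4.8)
The plan is to derive both implications from the Mellin transform machinery, using crucially that the nontrivial zeros of $\zeta(s)\zeta(s-r)$ are precisely the numbers $\rho$ and $\rho+r$, where $\rho$ ranges over the nontrivial zeros of $\zeta(s)$; thus the Riemann hypothesis for $\zeta(s)$ is equivalent to every nontrivial zero of $\zeta(s)\zeta(s-r)$ having real part $\frac{1}{2}$ or $r+\frac{1}{2}$. Note that the target exponent satisfies
\begin{align*}
-\frac{k}{\ell}+\frac{1+2r}{2\ell}=-\frac{k-r-\frac12}{\ell},
\end{align*}
so it is the ``shifted'' zeros $\rho+r$, which under RH sit on the line $\Re(s)=r+\frac12$, that govern the bound; the zeros $\rho$ on $\Re(s)=\frac12$ produce only the faster-decaying, hence negligible, contribution of size $x^{-(k-1/2)/\ell}$. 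Throughout I use $\sum_{n\ge1}\sigma_r^{-1}(n)n^{-w}=\bigl(\zeta(w)\zeta(w-r)\bigr)^{-1}$ together with $\int_0^\infty e^{-y}y^{s-1}\,dy=\Gamma(s)$.

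\emph{Riemann hypothesis $\Rightarrow$ the bound.} For $\ell=1$ I would start from the explicit identity of Theorem \ref{corollary for zeta(s) zeta(s-r)}: under RH each sum over $\rho$ there is $\ll_{\e}x^{-(k-r-1/2)+\e}$, since $|x^{\rho-k}|=x^{1/2-k}$ and $|x^{\rho-k}x^{r}|=x^{1/2-k+r}$, while $\Gamma(k-\rho)$ and $\Gamma(k-r-\rho)$ decay exponentially in $\Im(\rho)$ and $1/\zeta'(\rho)$, $1/\zeta(\rho-r)$, $1/\zeta(r+\rho)$ grow at most polynomially (with \eqref{bracketing} taking care of convergence); the ${}_1F_1$-term is $O(x^{-k})$ and the finite sum is $O(x^{r-k-2})$, both strictly smaller. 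For general $\ell>0$ I would instead use partial summation: under RH, $\bigl(\zeta(s)\zeta(s-r)\bigr)^{-1}$ is holomorphic and of polynomial growth for $\Re(s)>r+\frac12$, so a standard Perron-type estimate gives $M(t):=\sum_{n\le t}\sigma_r^{-1}(n)\ll_{\e}t^{r+1/2+\e}$, and then
\begin{align*}
P_{k,\sigma,\ell}(x)=\int_{1^-}^{\infty}\frac{e^{-x/t^{\ell}}}{t^{k}}\,dM(t)\ll_{\e,k,r,\ell}x^{-\frac{k-r-1/2}{\ell}+\e}
\end{align*}
after the substitution $t=x^{1/\ell}u$, convergence at $u=\infty$ being guaranteed by $k\ge r+1>r+\frac12$.

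\emph{The bound $\Rightarrow$ Riemann hypothesis.} Assume \eqref{bound for eq criteria for sigma}. Then $\int_1^\infty P_{k,\sigma,\ell}(x)x^{s-1}\,dx$ is holomorphic for $\Re(s)<\frac{k-r-1/2}{\ell}$; since the classical zero-free region already gives $M(t)=o(t^{r+1})$, hence convergence of $\sum_n\sigma_r^{-1}(n)n^{-k}$ and boundedness of $P_{k,\sigma,\ell}(x)$ as $x\to0^+$, the Mellin transform $F(s):=\int_0^\infty P_{k,\sigma,\ell}(x)x^{s-1}\,dx$ is holomorphic on the strip $0<\Re(s)<\frac{k-r-1/2}{\ell}$. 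On the subregion $0<\Re(s)<\frac{k-r-1}{\ell}$ term-by-term integration gives $F(s)=\Gamma(s)\bigl(\zeta(k-\ell s)\zeta(k-\ell s-r)\bigr)^{-1}$, so by analytic continuation this identity persists on the whole strip; as $\Gamma(s)$ is holomorphic and non-vanishing there, $\zeta(k-\ell s)\zeta(k-\ell s-r)$ cannot vanish, whence $\zeta(w)\ne0$ for $\frac12<\Re(w)<k-r$. Because $k-r\ge1$, this excludes nontrivial zeros of $\zeta(s)$ with $\frac12<\Re(w)<1$, and the reflection $\rho\mapsto1-\rho$ from the functional equation of $\zeta(s)$ then forces every nontrivial zero onto the critical line, i.e.\ RH holds.

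The routine parts here are the exponential-decay estimates for $\Gamma$, the Perron and partial-summation bounds, and the functional-equation reflection. The genuinely delicate point, which I expect to be the main obstacle, is the endpoint $k=r+1$ in the converse: there $\sum_n\sigma_r^{-1}(n)n^{-k}$ fails to converge absolutely and the subregion $0<\Re(s)<\frac{k-r-1}{\ell}$ degenerates, so the term-by-term identification of $F(s)$ is no longer immediate. For $\ell=1$ one resolves this by feeding the explicit identity of Theorem \ref{corollary for zeta(s) zeta(s-r)} directly into the Mellin argument, and for general $\ell$ one recovers it either by a limiting argument from $k>r+1$ or by splitting $P_{k,\sigma,\ell}=P_N+R_N$ and analytically continuing the tail integral, in each case combined with a careful treatment of the $x\to0^+$ asymptotics of $P_{k,\sigma,\ell}$.
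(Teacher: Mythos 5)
Your proposal is correct and follows essentially the same route as the paper: the direct implication via the Mertens-type bound $\sum_{n\le t}\sigma_r^{-1}(n)\ll_\epsilon t^{r+1/2+\epsilon}$ under RH followed by partial summation, and the converse via the Mellin identity $\int_0^\infty x^{-s-1}P_{k,\sigma,\ell}(x)\,\mathrm{d}x=\Gamma(-s)/\bigl(\zeta(k+\ell s)\zeta(k+\ell s-r)\bigr)$, analytic continuation into the strip forced by the assumed bound, non-vanishing of $\Gamma$, and the functional equation of $\zeta(s)$. Your identification of the factor $\zeta(k+\ell s-r)$ (equivalently the shifted zeros $\rho+r$) as the one carrying the arithmetic information, and your caution about the endpoint $k=r+1$, match the paper's treatment; the only inessential deviation is the alternative $\ell=1$ argument from the explicit zero identity, which relies on unproven polynomial growth of $1/\zeta'(\rho)$ and is not needed.
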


\section{Nuts and Bolts}

In this section,  we state a few well-known results,  which will play a vital role in proving our main identity.  We begin with an important asymptotic formula, namely, stirling's formula which gives information about the asymptotic behaviour of $\Gamma(s)$. 
\begin{lemma}\label{Stirling}
In a vertical strip $r_1 \leq \sigma \leq r_2$,  we have
\begin{equation*}
|\Gamma (\sigma + i T)| = \sqrt{2\pi} | T|^{\sigma - 1/2} e^{-\frac{1}{2} \pi |T|} \left(1 + O\left(\frac{1}{|T|}\right)  \right), \quad {\rm as} \quad |T|\rightarrow \infty.
\end{equation*}
\end{lemma}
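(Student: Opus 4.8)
The plan is to establish the classical Stirling asymptotic for $|\Gamma(\sigma + iT)|$ in a fixed vertical strip $r_1 \leq \sigma \leq r_2$ as $|T| \to \infty$. The natural starting point is the full Stirling series for $\log \Gamma(s)$ valid as $|s| \to \infty$ in any sector excluding a neighbourhood of the negative real axis, namely
\begin{align}\label{log-stirling}
\log \Gamma(s) = \left( s - \tfrac{1}{2} \right) \log s - s + \tfrac{1}{2} \log(2\pi) + O\left( \frac{1}{|s|} \right),
\end{align}
where $\log s$ denotes the principal branch. First I would set $s = \sigma + iT$ and observe that, since $\sigma$ is confined to the bounded strip $[r_1, r_2]$, we have $|s| = |T|\sqrt{1 + (\sigma/T)^2} = |T| + O(1/|T|)$ as $|T| \to \infty$, so the error term $O(1/|s|)$ in \eqref{log-stirling} is the same as $O(1/|T|)$. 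The entire argument then reduces to computing the real part of the right-hand side of \eqref{log-stirling}, since $|\Gamma(s)| = \exp\left( \Re \log \Gamma(s) \right)$.

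Next I would carry out the real-part computation term by term. Write $s = |s| e^{i\theta}$ with $\theta = \Arg(s)$; in the upper half-plane $\theta \to \tfrac{\pi}{2}$ and in the lower half-plane $\theta \to -\tfrac{\pi}{2}$ as $|T| \to \infty$, and in both cases $\log s = \log |s| + i\theta$. The key quantity is $\Re\left[ \left( s - \tfrac{1}{2} \right) \log s \right]$. Expanding,
\begin{align*}
\Re\left[ \left( \sigma - \tfrac{1}{2} + iT \right) \left( \log|s| + i\theta \right) \right] = \left( \sigma - \tfrac{1}{2} \right) \log|s| - T\theta.
\end{align*}
The two nontrivial estimates needed here are $\log|s| = \log|T| + O(1/T^2)$, which follows from $|s| = |T| + O(1/|T|)$, and the asymptotic for $\theta$. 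For $T > 0$ one has $\theta = \tfrac{\pi}{2} - \Arctan(\sigma/T) = \tfrac{\pi}{2} - \sigma/T + O(1/T^3)$, whence $-T\theta = -\tfrac{\pi}{2}T + \sigma + O(1/T^2)$; the symmetric statement holds for $T < 0$ with $|T|$ in place of $T$, giving $-T\theta = -\tfrac{\pi}{2}|T| + \sigma + O(1/T^2)$ in both cases. Similarly $\Re(-s) = -\sigma$, which cancels against the $+\sigma$ just produced.

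Assembling these pieces yields
\begin{align*}
\Re \log \Gamma(\sigma + iT) = \left( \sigma - \tfrac{1}{2} \right) \log|T| - \tfrac{\pi}{2}|T| + \tfrac{1}{2}\log(2\pi) + O\left( \frac{1}{|T|} \right),
\end{align*}
and exponentiating gives $|\Gamma(\sigma + iT)| = \sqrt{2\pi}\, |T|^{\sigma - 1/2} e^{-\frac{\pi}{2}|T|} \exp\left( O(1/|T|) \right)$. Since $\exp\left( O(1/|T|) \right) = 1 + O(1/|T|)$, this is exactly the claimed formula, with uniformity in $\sigma \in [r_1, r_2]$ inherited from the uniformity of every estimate above on the bounded strip. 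The main obstacle I anticipate is purely bookkeeping rather than conceptual: one must track the argument $\theta = \Arg(\sigma + iT)$ carefully through both the upper and lower half-planes and verify that the linear-in-$T$ contributions from $-T\theta$ produce precisely $-\tfrac{\pi}{2}|T|$ with the spurious $+\sigma$ term cancelling against $\Re(-s)$; sloppiness here is the usual source of an incorrect constant or an incorrect power of $|T|$. All other steps are standard consequences of \eqref{log-stirling}, which I would cite from a standard reference rather than reprove.
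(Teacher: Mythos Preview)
Your derivation is correct and is the standard route to this estimate via the logarithmic Stirling expansion. The paper, however, does not prove this lemma at all: it is stated as a well-known asymptotic (immediately followed by the next lemma on Euler summation) and is implicitly left to standard references. So your proposal supplies more than the paper does; there is nothing to compare against beyond noting that the authors treat this as a quoted fact.
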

Next, we document Euler's summation formula, which will be useful in deriving the equivalent criteria for the Riemann hypothesis of the $L$-functions.  
\begin{lemma} \label{Euler's summation}
Let $\{ a_n\}$ be a sequence of complex numbers and $f(t)$ be a continuously differentiable function on $[1,x]$.  Consider $A(x):= \sum_{1 \leq n \leq x} a_n$.  Then we have
\begin{align*}
\sum_{ 1\leq n \leq x} a_n f(n) = A(x) f(x) - \int_{1}^{x} A(t) f'(t) {\rm d}t.
\end{align*}
\end{lemma}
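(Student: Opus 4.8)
The plan is to prove this by Abel summation, reducing the identity to a telescoping partial-summation step and then recognizing the resulting finite differences as integrals of $f'$ against the step function $A$. First I would set $N = \lfloor x \rfloor$ and observe that, since $A(t)$ increases only by jumps at the integers, it is constant on $[N,x]$, so $A(t) = A(N) = A(x)$ there. This lets me reduce the claimed identity to the case $x = N$: the extra piece $\int_N^x A(t) f'(t)\,dt = A(x)\bigl(f(x) - f(N)\bigr)$ exactly accounts for the difference between $A(x)f(x)$ and $A(N)f(N)$, so it suffices to establish $\sum_{n=1}^N a_n f(n) = A(N) f(N) - \int_1^N A(t) f'(t)\,dt$.

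Next I would write $a_n = A(n) - A(n-1)$ with the convention $A(0) = 0$, and substitute this into $\sum_{n=1}^N a_n f(n)$. After shifting the index in the term coming from $A(n-1)$ and using $A(0)=0$, the sum telescopes into the Abel form $A(N) f(N) + \sum_{n=1}^{N-1} A(n)\bigl(f(n) - f(n+1)\bigr)$. This summation-by-parts step is purely formal and requires no hypothesis on $f$ beyond being defined at the integers.

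The analytic input enters when converting each difference $f(n) - f(n+1)$ into $-\int_n^{n+1} f'(t)\,dt$ via the fundamental theorem of calculus, which is precisely where the continuous differentiability of $f$ on $[1,x]$ is used. Since $A(t) \equiv A(n)$ on $[n,n+1)$, I can pull $A(n)$ inside the integral to write $A(n)\bigl(f(n)-f(n+1)\bigr) = -\int_n^{n+1} A(t) f'(t)\,dt$; summing over $1 \le n \le N-1$ then gives $-\int_1^N A(t) f'(t)\,dt$, which settles the integer case and hence, after the reduction above, the lemma.

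I expect no serious obstacle here, since the statement is elementary; the only points requiring care are the boundary bookkeeping in passing from a general $x$ to the integer $N$ and the convention $A(0)=0$ that makes the telescoping close up. An alternative and even shorter route would be to interpret the left-hand sum as a Riemann--Stieltjes integral $\int_{1^-}^{x} f(t)\,\mathrm{d}A(t)$ and integrate by parts, using $A(1^-)=0$ to annihilate the lower boundary term; I would note this as a remark but carry out the elementary Abel-summation argument as the main proof.
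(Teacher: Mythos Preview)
Your proposal is correct and is the standard Abel summation argument; the paper itself does not supply a proof but simply cites \cite[p.~17]{Murty}, where essentially the same partial-summation computation appears. Your handling of the boundary case (reducing general $x$ to $N=\lfloor x\rfloor$ via the constancy of $A$ on $[N,x]$) and the telescoping via $a_n = A(n)-A(n-1)$ are exactly the expected steps, so there is nothing to add.
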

\begin{proof}
A proof of this result can be found in \cite[p.~17]{Murty}.  
\end{proof}
The next result gives an information about the behaviour of general $L$-functions in a vertical strip.  
\begin{lemma}\label{L function bound to vanish horizontal integral}
Any $L$-function is polynomially bounded in vertical strips $\sigma_1 < \Re(s) < \sigma_2,$ and $\Im(s) \geq 1$.
\end{lemma}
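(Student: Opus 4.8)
The plan is to obtain polynomial boundedness on the two vertical edges of the strip and then fill in the interior by a convexity (Phragm\'en--Lindel\"of) argument. Fix the strip $\sigma_1 \le \Re(s) \le \sigma_2$ with $\Im(s) \ge 1$. Since the assertion only concerns boundedness in a fixed strip, I may enlarge the strip and assume $\sigma_2 > \sigma_a$ and $\sigma_1 < \delta - \sigma_b$, where $\sigma_a$ and $\sigma_b$ are the abscissae of absolute convergence of $\phi$ and $\psi$; because we restrict to $\Im(s) \ge 1$, we stay away from the finitely many poles of $\phi$, which lie at real points, so $\phi$ is holomorphic throughout the region under consideration.

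On the right edge $\Re(s) = \sigma_2 > \sigma_a$, the Dirichlet series converges absolutely, whence $|\phi(\sigma_2 + iT)| \le \sum_{n=1}^{\infty} |a(n)| n^{-\sigma_2} = O(1)$ uniformly in $T$. On the left edge I would invoke the functional equation \eqref{modified functional equation for phi and psi}, rewritten as
\begin{equation*}
\phi(s) = \nu\, c^{\delta - 2s}\, \frac{\Gamma\left(A(\delta - s) + B\right)}{\Gamma(As + B)}\, \psi(\delta - s).
\end{equation*}
On $\Re(s) = \sigma_1$ we have $\Re(\delta - s) = \delta - \sigma_1 > \sigma_b$, so $\psi(\delta - s)$ is bounded by absolute convergence, and $c^{\delta - 2s}$ is bounded on a fixed vertical line. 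Writing $s = \sigma_1 + iT$, the two Gamma arguments have imaginary parts $-AT$ and $AT$ respectively; hence by Stirling's formula (Lemma \ref{Stirling}) the dominant exponential factors $e^{-\frac{\pi}{2}A|T|}$ cancel in the quotient, leaving only a power of $|T|$. Explicitly the ratio is $O\left(|T|^{A(\delta - 2\sigma_1)}\right)$ as $|T| \to \infty$, so $\phi(\sigma_1 + iT)$ is polynomially bounded.

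With polynomial bounds secured on both edges, it remains to interpolate. The functional equation combined with Stirling's formula shows that $\phi$ is of finite order in the closed strip (no factor grows faster than $e^{c|T|}$), so the Phragm\'en--Lindel\"of principle applies and yields the convexity bound $|\phi(\sigma + iT)| = O\left(|T|^{\ell(\sigma)}\right)$, where $\ell(\sigma)$ is the linear function interpolating the two edge exponents. In particular $\phi$ is polynomially bounded throughout $\sigma_1 \le \Re(s) \le \sigma_2$, $\Im(s) \ge 1$. The main point requiring care is the verification of the finite-order hypothesis needed to invoke Phragm\'en--Lindel\"of; this is exactly where the Stirling asymptotics of the Gamma factors in the functional equation are essential, since they guarantee that the growth is at most polynomial on the edges and at most of exponential order one in between.
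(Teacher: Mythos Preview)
The paper does not give its own proof of this lemma; it simply refers the reader to \cite[Lemma 5.2]{IK}. Your sketch is precisely the standard argument that reference carries out: bound $\phi$ on a line far to the right by absolute convergence, on a line far to the left via the functional equation and Stirling (so that the exponential factors in the Gamma quotient cancel and only a power of $|T|$ survives), and then interpolate through the strip by Phragm\'en--Lindel\"of. So your approach is correct and is essentially what the cited source does, only spelled out rather than delegated.

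One small caveat: your assertion that the finitely many poles of $\phi$ ``lie at real points'' is not part of the general Chandrasekharan--Narasimhan hypotheses, and in any case Phragm\'en--Lindel\"of requires holomorphy in the whole strip, not merely in the half-plane $\Im(s)\ge 1$. The standard remedy is to multiply $\phi$ by a polynomial vanishing at its poles before applying the convexity principle; this changes the final exponent only by an additive constant and so does not affect the conclusion. You correctly flag the finite-order verification as the delicate step, and your observation that it follows from the functional equation together with Stirling is the right one.
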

\begin{proof}
We refer \cite[Lemma 5.2]{IK} for the proof.
\end{proof}
 The forthcoming result will serve as one of the vital components in establishing equivalent criteria for the grand Riemann hypothesis. 
 \begin{lemma}\label{main lemma for converse part}
Let $L(f,s)$ be a nice $L$-function and its inverse defined in \eqref{def nyc L function} and \eqref{inverse of L(f,s)},  respectively.  For $k \geq 1$, $\ell >0$,  we define the function
\begin{align*}
P_{k,f,\ell}(x): = \sum_{n=1}^{\infty}\frac{\mu_f(n)}{n^{k}} \exp \left(- \frac{ x}{n^{\ell}}\right). 
\end{align*}
Then for $\Re(s) > \frac{1-k}{\ell}$, except for  $s = 0,1,2 \cdots$, we have
\begin{align*}
 \int_{0}^{\infty}x^{-s-1}P_{k,f,\ell}(x) \mathrm{d}x = \frac{\Gamma(-s)}{L(f, \ell s+k)}.
\end{align*}
\end{lemma}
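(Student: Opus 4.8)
The plan is to compute the Mellin transform $\int_0^\infty x^{-s-1} P_{k,f,\ell}(x)\,\mathrm{d}x$ by interchanging the sum and the integral, and then recognizing the resulting integral as the Gamma function. Concretely, since $P_{k,f,\ell}(x) = \sum_{n=1}^\infty \frac{\mu_f(n)}{n^k}\exp(-x/n^\ell)$, one would like to write
\begin{align*}
\int_0^\infty x^{-s-1} P_{k,f,\ell}(x)\,\mathrm{d}x = \sum_{n=1}^\infty \frac{\mu_f(n)}{n^k} \int_0^\infty x^{-s-1} \exp\!\left(-\frac{x}{n^\ell}\right)\mathrm{d}x.
\end{align*}
The inner integral, after the substitution $u = x/n^\ell$, becomes $n^{-\ell s}\int_0^\infty u^{-s-1} e^{-u}\,\mathrm{d}u = n^{-\ell s}\,\Gamma(-s)$, which is valid (as a convergent integral, and via analytic continuation of $\Gamma$) precisely when we avoid $s = 0, 1, 2, \dots$; near $x=0$ we need $\Re(-s) > 0$ for absolute convergence of the elementary integral, but the identity $\int_0^\infty u^{-s-1}e^{-u}\,\mathrm{d}u = \Gamma(-s)$ extends meromorphically. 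Summing over $n$ then gives $\Gamma(-s)\sum_{n=1}^\infty \frac{\mu_f(n)}{n^{\ell s + k}} = \frac{\Gamma(-s)}{L(f,\ell s + k)}$, using the Dirichlet series expansion \eqref{inverse of L(f,s)}, which converges (absolutely) when $\Re(\ell s + k) > 1$, i.e. $\Re(s) > \frac{1-k}{\ell}$.

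The first technical step is therefore to justify the interchange of summation and integration, and this is the step I expect to be the main obstacle. Near $x = \infty$ each term $\exp(-x/n^\ell)$ decays, but the decay rate degrades badly as $n\to\infty$ (the "mass" of the $n$th term sits near $x \asymp n^\ell$), so one cannot bound the tail uniformly and naively apply Tonelli/Fubini on the whole half-line. The cleanest remedy is to split the integral at $x = 1$ (or at some fixed point): on $(0,1]$ one uses that $P_{k,f,\ell}(x)$ is bounded (indeed $P_{k,f,\ell}(x)\to \sum \mu_f(n)/n^k$ as $x\to 0^+$ by the assumed convergence of that series, uniformly on $(0,1]$ by dominated convergence since $|\exp(-x/n^\ell)|\le 1$), so $x^{-s-1}P_{k,f,\ell}(x)$ is integrable near $0$ provided $\Re(-s) > 0$ — or, more carefully, one integrates by parts / uses the functional-equation-type estimate to push $\Re(s)$ down to $\frac{1-k}{\ell}$. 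On $[1,\infty)$ one estimates $P_{k,f,\ell}(x)$ using partial summation against $\sum_{n\le y}|\mu_f(n)|$ (or rather the non-trivial bound; but here even the trivial bound $|\mu_f(n)| \ll n^\epsilon$ coming from absolute convergence of \eqref{inverse of L(f,s)} suffices) to get $P_{k,f,\ell}(x) = O(x^{-M})$ for every $M>0$ as $x\to\infty$, which makes $\int_1^\infty x^{-s-1}P_{k,f,\ell}(x)\,\mathrm{d}x$ absolutely convergent for all $s$; then Fubini applies term-by-term on each piece and the pieces reassemble.

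After the interchange is justified, the remaining steps are routine: perform the substitution $u = x/n^\ell$ in each term, recognize $\Gamma(-s)$, sum the Dirichlet series to get $1/L(f,\ell s+k)$, and note that both sides are meromorphic in $s$ so the identity, established first on a right half-plane where everything converges absolutely, extends to the full region $\Re(s) > \frac{1-k}{\ell}$ with $s \neq 0,1,2,\dots$ by analytic continuation (the poles of $\Gamma(-s)$ at the non-negative integers being exactly the excluded points). I would present the argument in this order: (i) establish the rapid decay of $P_{k,f,\ell}(x)$ as $x\to\infty$ and the boundedness as $x\to 0^+$; (ii) justify Fubini on $(0,1]$ and $[1,\infty)$ separately in a suitable half-plane; (iii) evaluate the elementary Gamma integral and sum the series; (iv) extend by analytic continuation.
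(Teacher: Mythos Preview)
Your direct Mellin-transform approach is sound in principle and is in fact simpler than the route the paper takes, but one step in your justification is wrong and would fail. You claim that partial summation with the trivial bound $|\mu_f(n)|\ll n^\epsilon$ yields $P_{k,f,\ell}(x)=O(x^{-M})$ for every $M>0$ as $x\to\infty$. This is false: the decay exponent of $P_{k,f,\ell}(x)$ is precisely what encodes the location of the zeros of $L(f,s)$, which is the entire point of Riesz--Hardy--Littlewood criteria. If $P_{k,f,\ell}$ decayed faster than every power, its Mellin transform $\Gamma(-s)/L(f,\ell s+k)$ would be holomorphic in every right half-plane away from the non-negative integers, forcing $L(f,\ell s+k)$ to be zero-free there, which is absurd. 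Partial summation with a bound $\sum_{n\le y}|\mu_f(n)|=O(y^{1+\epsilon})$ gives at best $P_{k,f,\ell}(x)=O_\epsilon(x^{(1-k)/\ell+\epsilon})$, nothing more.

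Fortunately, you do not need any decay estimate at all: the interchange follows immediately from Tonelli. For $\frac{1-k}{\ell}<\Re(s)<0$ (non-empty when $k>1$) one has
\[
\sum_{n=1}^\infty \frac{|\mu_f(n)|}{n^k}\int_0^\infty x^{-\Re(s)-1}e^{-x/n^\ell}\,\mathrm{d}x
= \Gamma(-\Re(s))\sum_{n=1}^\infty \frac{|\mu_f(n)|}{n^{k+\ell\Re(s)}}<\infty,
\]
which is exactly the absolute convergence needed; then your substitution $u=x/n^\ell$ and summation to $\Gamma(-s)/L(f,\ell s+k)$ go through verbatim, and analytic continuation finishes. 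The boundary case $k=1$ still needs separate handling, as it does in the paper.

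For comparison, the paper does \emph{not} swap the sum defining $P_{k,f,\ell}$ with the integral. Instead it multiplies the integral by $L(f,k+\ell s)=\sum_n A_f(n)n^{-k-\ell s}$, pulls that sum inside (after rescaling $x\mapsto x/n^\ell$), and uses the Dirichlet-convolution identity $\sum_n \frac{A_f(n)}{n^k}P_{k,f,\ell}(x/n^\ell)=e^{-x}$ (or $e^{-x}-1$ when $k=1$ and $L(f,s)$ has a pole at $s=1$) to reduce to $\int_0^\infty x^{-s-1}e^{-x}\,\mathrm{d}x=\Gamma(-s)$. The payoff of the paper's detour is that the integrand becomes $e^{-x}$, whose exponential decay makes the dominated-convergence justification painless and automatically handles the $k=1$ pole case. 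Your approach is shorter and more transparent once the Tonelli bound above replaces the incorrect decay claim.
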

\begin{proof}
We start with the definition of $P_{k,f, \ell}(x)$. For $k \geq 1$,  we can write
\begin{align}\label{P_k,f,l}
P_{k,f,\ell}(x) = \sum_{n=1}^{\infty}\frac{\mu_f(n)}{n^{k}} \exp \left(- \frac{ x}{n^{\ell}}\right)  & = \sum_{n=1}^{\infty}\frac{\mu_f(n)}{n^{k}} \sum_{m=0}^{\infty}\frac{(-1)^m x^m}{n^{\ell m} m!} \nonumber \\ & = \sum_{m=0}^{\infty}\frac{(-1)^m x^m}{ m!}\sum_{n=1}^{\infty}\frac{\mu_f(n)}{n^{k+\ell m}} \nonumber \\ & = \sum_{m=0}^{\infty}\frac{(-1)^m x^m}{ m!L(f, \ell m+k)} \nonumber \\&= \frac{1}{L(f,k)} + \sum_{m=1}^{\infty}\frac{(-1)^m x^m}{ m!L(f, \ell m+k)}.
\end{align}
For $\Re(s) > \frac{1-k}{\ell}$, use the series expansion \eqref{def nyc L function} of $L(f, k+\ell s)$ to write
\begin{align}\label{final_intergral}
L(f, k+\ell s) \int_{0}^{\infty}x^{-s-1}P_{k,f,\ell}(x) \mathrm{d}x &=  \sum_{n=1}^{\infty}\frac{A_f(n)}{n^k} \int_{0}^{\infty}\frac{x^{-s-1}}{n^{\ell s}} P_{k,f,\ell}(x) \mathrm{d}x \nonumber \\ & =  \sum_{n=1}^{\infty}\frac{A_f(n)}{n^k} \int_{0}^{\infty}x^{-s-1} P_{k,f,\ell}\left(\frac{x}{n^{\ell}}\right)\mathrm{d}x \nonumber \\ &= \int_{0}^{\infty}x^{-s-1}\sum_{n=1}^{\infty}\frac{A_f(n)}{n^k} P_{k,f,\ell}\left(\frac{x}{n^{\ell}}\right) \mathrm{d}x.
\end{align}
Here, in the second last step, we changed the variable $x$ by $\frac{x}{n^{\ell}}$ and interchange of the order of summation and integration in the last step is facilitated due to dominated convergence theorem. This is because the infinite sum is bounded by $e^{-x}$,  see \eqref{infinite sum boubded by exp},  which is integrable in the interval $(0,\infty)$. Now, utilize \eqref{P_k,f,l} to obtain
\begin{align}\label{infinite sum boubded by exp}
\sum_{n=1}^{\infty}\frac{A_f(n)}{n^k} P_{k,f,\ell}\left(\frac{x}{n^{\ell}}\right) &= \sum_{n=1}^{\infty}\frac{A_f(n)}{n^k}\left(\frac{1}{L(f,k)} + \sum_{m=1}^{\infty}\frac{(-1)^m x^m}{ m! L(f, k+\ell m)n^{\ell m}}\right) 
\nonumber \\ &=\begin{cases}
e^{-x} -1,  & \text{if}~ k=1~\text{and}~L(f,s)~ \text{has a pole at}~s=1, \\
e^{-x},  & \text{otherwise}. 
\end{cases}
\end{align}
In this context, combining \eqref{infinite sum boubded by exp} and \eqref{final_intergral},  when $k=1$ and $L(f,s)$ has a pole at $s=1$,  we obtain the following expression
\begin{align*}
L(f, k+\ell s)\int_{0}^{\infty}x^{-s-1}P_{k,f,\ell}(x) \mathrm{d}x &=  \int_{0}^{\infty}x^{-s-1} (e^{-x}-1) \mathrm{d}x \\&= \frac{-1}{s} \int_{0}^{\infty} x^{(1-s)-1} e^{-x} \mathrm{d}x \
\\ &= - \frac{\Gamma(1-s)}{s}.
\end{align*}
This result is valid for $0< \Re(s)<1$.  Note that we used the fact that $\lim_{x \rightarrow 0} \frac{e^{-x}-1}{x^s} = 0$ for $0< \Re(s)<1$. Furthermore,  we know $- \frac{\Gamma(1-s)}{s} = \Gamma(-s)$.  
In other case,  we can see that
\begin{align*}
L(f, k+\ell s)\int_{0}^{\infty}x^{-s-1}P_{k,f,\ell}(x) \mathrm{d}x =  \int_{0}^{\infty}x^{-s-1} e^{-x} \mathrm{d}x = \Gamma(-s),
\end{align*}
valid for $\frac{1-k}{\ell} < \Re(s) <0$. By analytic continuation, we can extend this to right half plane $\Re(s) > \frac{1-k}{\ell}$ except $s = 0,1,2 \cdots$. The proof is now complete.
\end{proof}
\begin{lemma}\label{main lemma for sigma bound}
Let $r$ be a non-negative integer and  $k \geq r+1$ be any positive real number.  We define
\begin{align*}
P_{k,  \sigma,\ell}(x): = \sum_{n=1}^{\infty}\frac{\sigma_{r}^{-1}(n)}{n^{k}} \exp \left(- \frac{ x}{n^{\ell}}\right). 
\end{align*}
Then for $Re(s) > \frac{1-k+r}{\ell}$ except $s = 0,1,2 \cdots$, we have

\begin{align*}
\int_{0}^{\infty}x^{-s-1}P_{k,\sigma,\ell}(x) \mathrm{d}x = \frac{\Gamma(-s)}{\zeta(k+\ell s)\zeta(k+\ell s-r)}.
\end{align*}
\end{lemma}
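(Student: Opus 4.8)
The plan is to mimic the proof of Lemma \ref{main lemma for converse part}, replacing the general $L$-function $L(f,s)$ by the specific product $\zeta(s)\zeta(s-r)$, whose Dirichlet coefficients are $\sigma_r(n)$ and whose Dirichlet inverse coefficients are $\sigma_r^{-1}(n)$. First I would expand the exponential in its Taylor series and interchange the order of summation (justified by absolute convergence for $k \geq r+1$), obtaining
\begin{align*}
P_{k,\sigma,\ell}(x) = \sum_{m=0}^{\infty} \frac{(-1)^m x^m}{m!} \sum_{n=1}^{\infty} \frac{\sigma_r^{-1}(n)}{n^{k+\ell m}} = \sum_{m=0}^{\infty} \frac{(-1)^m x^m}{m! \, \zeta(k+\ell m)\zeta(k+\ell m - r)},
\end{align*}
using that $\sum_{n=1}^{\infty} \sigma_r^{-1}(n) n^{-s} = 1/(\zeta(s)\zeta(s-r))$ for $\Re(s)$ large enough. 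Since $r$ is a non-negative integer, one needs $k + \ell m - r > 1$, which for $m \geq 1$ follows from $k \geq r+1$ and $\ell, m > 0$; the $m=0$ term is $1/(\zeta(k)\zeta(k-r))$, which is finite as long as $k \neq r+1$ (and if $k = r+1$ one must track the pole of $\zeta(s)$ at $s=1$ exactly as in the $k=1$ case of Lemma \ref{main lemma for converse part}).

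Next I would compute the Mellin transform. For $\Re(s)$ in an appropriate range, write $\zeta(k+\ell s)\zeta(k+\ell s - r) = \sum_{n=1}^{\infty} \sigma_r(n) n^{-(k+\ell s)}$ and multiply through:
\begin{align*}
\zeta(k+\ell s)\zeta(k+\ell s-r) \int_0^{\infty} x^{-s-1} P_{k,\sigma,\ell}(x)\, \mathrm{d}x = \int_0^{\infty} x^{-s-1} \sum_{n=1}^{\infty} \frac{\sigma_r(n)}{n^k} P_{k,\sigma,\ell}\!\left(\frac{x}{n^{\ell}}\right) \mathrm{d}x,
\end{align*}
after the substitution $x \mapsto x/n^{\ell}$ and an interchange justified by dominated convergence. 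The key computation is that the inner sum telescopes: substituting the series for $P_{k,\sigma,\ell}$ and using $\sigma_r * \sigma_r^{-1} = I$ (Dirichlet convolution) collapses $\sum_{n=1}^{\infty} \sigma_r(n) n^{-k} P_{k,\sigma,\ell}(x/n^{\ell})$ to $e^{-x}$ (or $e^{-x} - 1$ in the boundary case $k=r+1$, owing to the simple pole of $\zeta$ at $1$). Then $\int_0^{\infty} x^{-s-1} e^{-x}\,\mathrm{d}x = \Gamma(-s)$ on the strip where this converges, and in the boundary case $\int_0^{\infty} x^{-s-1}(e^{-x}-1)\,\mathrm{d}x = -\Gamma(1-s)/s = \Gamma(-s)$.

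Finally I would record that this identity, initially valid on a strip like $\frac{1-k+r}{\ell} < \Re(s) < 0$, extends by analytic continuation to $\Re(s) > \frac{1-k+r}{\ell}$ except at $s = 0, 1, 2, \dots$, where $\Gamma(-s)$ has poles and the series representation degenerates. The main obstacle — really the only subtlety beyond bookkeeping — is handling the pole of $\zeta(s)$ at $s=1$, which creates a genuine difference in the telescoped inner sum precisely when $k = r+1$ (making $\zeta(k - r) = \zeta(1)$ divergent) or when $k + \ell m = 1$; one must verify that in these cases the ``$-1$'' correction is exactly what turns $-\Gamma(1-s)/s$ back into $\Gamma(-s)$, so that the stated formula holds uniformly. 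This is entirely parallel to the argument already carried out in the proof of Lemma \ref{main lemma for converse part}, so I expect the proof to be short and to cite that lemma's proof for the analogous steps.
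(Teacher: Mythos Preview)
Your proposal is correct and follows exactly the approach the paper intends: the paper's own proof of this lemma simply reads ``The proof follows along the same line as of Lemma \ref{main lemma for converse part}. Thus, we skip the details here.'' You have faithfully unpacked those details---replacing $L(f,s)$ by $\zeta(s)\zeta(s-r)$, noting the boundary case $k=r+1$ plays the role of the $k=1$ pole case in Lemma \ref{main lemma for converse part}---so there is nothing to add.
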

\begin{proof}
The proof follows along the same line as of Lemma \ref{main lemma for converse part}. Thus, we skip the details here.  
\end{proof}
Now we state a result for the growth of the summatory function of an arithmetical function.  
\begin{lemma}\label{for_mertens_bound}
Let $a(n)$ be an arithmetical function such that $a(n) = O(\Phi(n))$,  where $\Phi(n)$ is an increasing function for $x \geq x_0$. Suppose that the Dirichlet series $A(s) = \sum_{n=1}^{\infty}\frac{a(n)}{n^s}$ is absolutely convergent for $\Re(s) > c_0$ and 
\begin{align*}
\sum_{n=1}^{\infty} \frac{a(n)}{n^{\sigma}} \leq \frac{1}{(\sigma - c_0)^{\gamma}} \quad \rm{as} \quad \sigma \rightarrow c_0^{+},
\end{align*}
for some $\gamma> 0$. Then for any $c > c_0$, one has 
\begin{align*}
\sum_{n \leq x}a(n) &= \frac{1}{2 \pi i} \int_{c-iT}^{c+iT} A(w)x^w \frac{\mathrm{d}w}{w} + O\left(\frac{x^c}{T(c-c_0)^{\gamma}}\right) \nonumber \\   &+ O\left(\frac{\Phi(2x) x \log(2x)}{T}\right) + O\left(\Phi(2x)\right),
\end{align*}
where $T$ is some large positive real number. 
\end{lemma}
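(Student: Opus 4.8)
The plan is to recognize this as the classical effective (truncated) Perron formula and to prove it from a single kernel estimate followed by a careful splitting of the resulting error sum. The whole argument reduces to one auxiliary lemma about the vertical integral of $y^w/w$, together with the hypotheses on the size of $a(n)$ and on the growth of $A(\sigma)$ as $\sigma\to c_0^+$.

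First I would establish the kernel estimate: for $c,T>0$ and $y>0$,
\begin{align*}
\frac{1}{2\pi i}\int_{c-iT}^{c+iT}\frac{y^w}{w}\,\mathrm{d}w = \delta(y) + E(y),
\end{align*}
where $\delta(y)=1$ for $y>1$, $\delta(y)=0$ for $0<y<1$, $\delta(y)=\tfrac12$ for $y=1$, and the error obeys $|E(y)|\le y^{c}\min\!\big(1,\tfrac{1}{T|\log y|}\big)$ when $y\neq1$, while $|E(y)|\le c/T$ when $y=1$. This is obtained by completing the contour: for $y>1$ one shifts the segment to $\Re(w)=-\infty$, picking up the residue $1$ at the simple pole $w=0$, and bounds each horizontal piece by $\int_{-\infty}^{c}\tfrac{y^{\sigma}}{|\sigma\pm iT|}\,\mathrm{d}\sigma\le \tfrac{1}{T}\cdot\tfrac{y^{c}}{\log y}$; for $0<y<1$ one shifts to the right, encountering no pole, and the crude bound $|E(y)|\le y^{c}$ follows by estimating the segment directly.

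Next I would apply this with $y=x/n$, multiply by $a(n)$, and sum over $n$. Absolute convergence of $A(w)=\sum a(n)n^{-w}$ on the line $\Re(w)=c>c_0$ justifies interchanging summation and integration, yielding
\begin{align*}
\frac{1}{2\pi i}\int_{c-iT}^{c+iT}A(w)x^{w}\frac{\mathrm{d}w}{w} &= \sum_{n=1}^{\infty}a(n)\Big[\delta(x/n)+E(x/n)\Big] \\
&= \sum_{n\le x}a(n) + \sum_{n=1}^{\infty}a(n)E(x/n),
\end{align*}
where I assume $x\notin\mathbb{Z}$ (if $x\in\mathbb{Z}$ the single term $n=x$ contributes $a(x)/2=O(\Phi(2x))$ via the $y=1$ bound, absorbed in the final error). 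Rearranging isolates $\sum_{n\le x}a(n)$ as the main term together with the error $-\sum_{n}a(n)E(x/n)$, which is all that remains to be estimated.

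The last and most delicate step is to bound $\sum_{n}a(n)E(x/n)$ by splitting the range into a \emph{far} part ($n\le x/2$ or $n\ge 2x$) and a \emph{near} part ($x/2<n<2x$). In the far part $|\log(x/n)|\ge\log 2$, so $|E(x/n)|\ll \tfrac{1}{T}(x/n)^{c}$, and summing against $|a(n)|$ with the convergence bound $\sum_{n}|a(n)|n^{-c}\le (c-c_0)^{-\gamma}$ furnished by the hypothesis produces the term $O\!\big(\tfrac{x^{c}}{T(c-c_0)^{\gamma}}\big)$. In the near part $(x/n)^{c}\asymp1$, and since $\Phi$ is increasing and $n<2x$ we have $a(n)=O(\Phi(2x))$, so the contribution is $\ll \Phi(2x)\sum_{x/2<n<2x}\min\!\big(1,\tfrac{1}{T|\log(x/n)|}\big)$. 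I expect this near-sum to be the main obstacle, since $|\log(x/n)|$ becomes small as $n\to x$: using $|\log(x/n)|\asymp |x-n|/x$ on this range, the terms with $|x-n|\le x/T$ number $O(1+x/T)$ and contribute $O(1)$ each, while those with $|x-n|>x/T$ sum to $\ll \tfrac{x}{T}\log(2x)$, giving the remaining errors $O\!\big(\tfrac{\Phi(2x)x\log(2x)}{T}\big)+O(\Phi(2x))$. Collecting the three error contributions completes the proof.
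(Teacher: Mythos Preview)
Your proposal is a correct and complete proof of the truncated Perron formula with explicit error terms. The paper itself does not supply any argument for this lemma: its entire ``proof'' is the single sentence ``This result can be found in \cite[p.~486, Equation (A.10)]{Ivic}.'' So there is nothing to compare at the level of method; what you have written is precisely the standard derivation (kernel estimate for $\tfrac{1}{2\pi i}\int_{c-iT}^{c+iT}y^{w}w^{-1}\,\mathrm{d}w$, termwise application with $y=x/n$, far/near splitting of the error sum) that Ivi\'c gives in the cited appendix, and your treatment of the near range via $|\log(x/n)|\asymp |x-n|/x$ together with the dichotomy $|x-n|\lessgtr x/T$ is exactly how the $O(\Phi(2x)x\log(2x)/T)+O(\Phi(2x))$ terms arise there. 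In short: your argument is correct and coincides with the proof the paper outsources to the reference.
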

\begin{proof}
This result can be found in \cite[p.~486,  Equation (A.10)]{Ivic}.
\end{proof}
Using the above identity,  under the assumption of the Riemann hypothesis for $\zeta(s)$,  we obtain the below result which is an analogue of  Littlewood's bound \eqref{merten bound general}.   

\begin{lemma}\label{mertens bound for sigma} Let $r$ be a non-negative integer.  
Under the assumption of the Riemann hypothesis for $\zeta(s)$,  we have
\begin{align*}
\sum_{n \leq x} \sigma_{r}^{-1}(n) = O\left(x^{\frac{1}{2}+ r + \epsilon}\right).
\end{align*}
\end{lemma}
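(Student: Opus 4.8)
The plan is to prove this by a standard truncated Perron argument: shift the line of integration just past $\Re(w)=r+\tfrac12$ and feed in the Riemann hypothesis for $\zeta(s)$ twice, once through the nonvanishing $\zeta(s)\neq 0$ for $\Re(s)>\tfrac12$ (to justify the shift) and once through the classical consequence $\zeta(s)^{-1}\ll_{\delta}|t|^{\epsilon}$, valid for $\Re(s)\ge\tfrac12+\delta$ (to bound the shifted integral). \textbf{Step 1 (arithmetic of $\sigma_r^{-1}$).} From $\sum_{n\ge1}\sigma_r(n)n^{-s}=\zeta(s)\zeta(s-r)$ the Dirichlet inverse has generating series $\zeta(s)^{-1}\zeta(s-r)^{-1}$, so $\sigma_r^{-1}(n)=\sum_{de=n}\mu(d)\mu(e)e^{r}$; hence $|\sigma_r^{-1}(n)|\le n^{r}d(n)\ll_{\epsilon}n^{r+\epsilon}$, and $\sum_{n\ge1}|\sigma_r^{-1}(n)|n^{-\sigma}\le\zeta(\sigma)\zeta(\sigma-r)\ll(\sigma-r-1)^{-2}$ as $\sigma\to(r+1)^{+}$. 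Thus in Lemma \ref{for_mertens_bound} one may take the increasing majorant $\Phi(n)=n^{r+\epsilon}$, abscissa $c_0=r+1$ and $\gamma=2$.

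\textbf{Step 2 (Perron and the contour shift).} I would apply Lemma \ref{for_mertens_bound} with $A(s)=\zeta(s)^{-1}\zeta(s-r)^{-1}$, $c=r+1+\epsilon$ and a parameter $T\ge 2$, which (after absorbing powers of $\epsilon^{-1}$) yields
\begin{align*}
\sum_{n\le x}\sigma_r^{-1}(n)=\frac{1}{2\pi i}\int_{c-iT}^{c+iT}\frac{x^{w}}{w\,\zeta(w)\zeta(w-r)}\,dw+O_{\epsilon}\left(\frac{x^{r+1+\epsilon}\log(2x)}{T}\right)+O_{\epsilon}\left(x^{r+\epsilon}\right).
\end{align*}
Then I move the integration to the line $\Re(w)=r+\tfrac12+\epsilon$. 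Throughout the closed strip $r+\tfrac12+\epsilon\le\Re(w)\le c$ the integrand is holomorphic: under RH $\zeta(w)\neq0$ (as $\Re(w)>\tfrac12$) and $\zeta(w-r)\neq0$ (as $\Re(w-r)>\tfrac12$); the pole of $\zeta(w-r)$ at $w=r+1$, and, when $r=0$, the double pole of $\zeta(w)^2$ at $w=1$, are cancelled in the reciprocal and give a zero rather than a pole; and the pole of $1/w$ at $w=0$ lies to the left of the new line. Hence no residue is picked up and Cauchy's theorem replaces the segment on $\Re(w)=c$ by the segment on $\Re(w)=r+\tfrac12+\epsilon$ plus two horizontal connectors at heights $\pm T$.

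\textbf{Step 3 (estimates under RH and optimisation).} On the connectors $\sigma-r$ runs through $[\tfrac12+\epsilon,\,1+\epsilon]$, so by the RH bound $\zeta(\sigma+iT)^{-1},\zeta(\sigma-r+iT)^{-1}\ll_{\epsilon}T^{\epsilon}$; with $|w|\asymp T$ the connectors contribute $\ll_{\epsilon}x^{r+1+\epsilon}T^{\epsilon-1}$. On the new vertical line $|x^{w}|=x^{r+1/2+\epsilon}$ and, again by RH, $|w\,\zeta(w)\zeta(w-r)|^{-1}\ll_{\epsilon}(|t|+1)^{\epsilon-1}$, so that line contributes $\ll_{\epsilon}x^{r+1/2+\epsilon}T^{\epsilon}$. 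Collecting the terms,
\begin{align*}
\sum_{n\le x}\sigma_r^{-1}(n)\ll_{\epsilon}x^{r+1/2+\epsilon}T^{\epsilon}+x^{r+1+\epsilon}T^{\epsilon-1}+\frac{x^{r+1+\epsilon}\log x}{T}+x^{r+\epsilon},
\end{align*}
and choosing $T=x^{1/2}$ and relabelling $\epsilon$ gives $\sum_{n\le x}\sigma_r^{-1}(n)=O_{\epsilon}(x^{1/2+r+\epsilon})$.

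\textbf{Main obstacle.} The routine part is the Perron bookkeeping; the real content is the analytic input from RH, namely the nonvanishing needed to push the line into $\Re(w)>r+\tfrac12$ together with $\zeta(s)^{-1}\ll|t|^{\epsilon}$ there, which is exactly what keeps the shifted integral at size $x^{r+1/2+\epsilon}$. The one slightly delicate point is making sure the poles of the $\zeta$-factors at $w=r+1$ (and at $w=1$ when $r=0$) are correctly seen to be killed in the reciprocal and so contribute nothing to the shift. An equivalent and perhaps tidier packaging is to run the same argument for $b(n):=\sigma_r^{-1}(n)n^{-r}$, whose Dirichlet series $\zeta(s)^{-1}\zeta(s+r)^{-1}$ lets one shift in the $r$-independent strip $\tfrac12+\epsilon\le\Re(s)\le1+\epsilon$, deduce the Mertens-type bound $\sum_{n\le x}b(n)\ll x^{1/2+\epsilon}$, and then recover the claim by partial summation from $\sum_{n\le x}\sigma_r^{-1}(n)=\sum_{n\le x}b(n)n^{r}$.
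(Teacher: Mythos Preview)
Your proof is correct and follows essentially the same approach as the paper: apply the truncated Perron formula of Lemma~\ref{for_mertens_bound} with $\Phi(n)=n^{r+\epsilon}$ and $c=r+1+\epsilon$, shift the contour to $\Re(w)=r+\tfrac12+\epsilon$, and use the RH consequence $\zeta(s)^{-1}\ll_{\epsilon}|t|^{\epsilon}$ to bound the vertical and horizontal pieces. The only cosmetic differences are that the paper estimates the horizontal integrals more crudely (by $T^{2\epsilon-1}x^{2+r+\epsilon}$) and accordingly optimises with $T=x^{3/2}$ rather than your $T=x^{1/2}$, and does not spell out the absence of residues in the shifted strip as carefully as you do.
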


\begin{proof}
First,  assuming RH for the Riemann zeta function,  we will prove this bound.  In Lemma \ref{for_mertens_bound},  we consider $a(n)=  \sigma_{r}^{-1}(n) = \mu N^r * \mu$. Then
\begin{align*}
|a(n)| = \bigg|\sum_{d|n}\mu(d)d^r \mu\left(\frac{n}{d}\right)\bigg| \leq \sum_{d|n} d^r \leq \sum_{d|n} n^r = n^r d(n)= O(n^{r+\epsilon}),
\end{align*}
for any $\epsilon >0$.  
This suggests us to take $\Phi(n) = n^{r+\epsilon}$. Thus, with $A(s) = \frac{1}{\zeta(s) \zeta(s-r)}$ and $c = 1+ r + \epsilon$, we see that
\begin{align*}
\sum_{n \leq x} \sigma_{r}^{-1}(n) &= \frac{1}{2\pi i}\int_{1+r+\epsilon-iT}^{1+r+\epsilon+iT} \frac{1}{\zeta(w)\zeta(w-r)}\frac{x^w}{w} \mathrm{d}w + O\left(\frac{x^{1+r+\epsilon}}{T}\right) \nonumber \\   & + O\left(\frac{x^{r+\epsilon} x \log(2x)}{T}\right) + O\big(x^{r+\epsilon}\big). 
\end{align*}
Now we change the line of integration from $\Re(s) = 1+r+\epsilon $ to $\Re(s) = \frac{1}{2}  + r + \epsilon$. Then we have
\begin{align}\label{asymptotic for RH}
\sum_{n \leq x} \sigma_{r}^{-1}(n) &= \frac{1}{2\pi i} \bigg[\int_{\frac{1}{2} + r + \epsilon +iT}^{1+r+\epsilon+iT}+\int_{\frac{1}{2}  + r + \epsilon -iT}^{\frac{1}{2}  + r + \epsilon+iT} + \int_{1+r+\epsilon-iT}^{\frac{1}{2}  + r + \epsilon-iT}\bigg]\frac{1}{\zeta(w)\zeta(w-r)}\frac{x^w}{w} \mathrm{d}w   \nonumber\\ & + O\left(\frac{x^{1+r+\epsilon}}{T}\right) + O(x^{r+\epsilon}).
\end{align}
Under the assumption of RH,  we know $\frac{1}{\zeta(\sigma+it)}=O(|t|^\epsilon)$ for $\sigma> 1/2$.  Using this bound and under simplification,   one can check that 
\begin{align*}
\left| \int_{\frac{1}{2} + r + \epsilon +iT}^{1+r+\epsilon+iT}  \frac{1}{\zeta(w)\zeta(w-r)}\frac{x^w}{w} \mathrm{d}w \right|  & \ll \int_{\frac{1}{2} + r + \epsilon }^{1+r+\epsilon} x^{\sigma} T^{2\epsilon-1} d\sigma \ll T^{2\epsilon-1} x^{2+r+\epsilon},  \\
\left| \int_{\frac{1}{2}  + r + \epsilon -iT}^{\frac{1}{2}  + r + \epsilon+iT}  \frac{1}{\zeta(w)\zeta(w-r)}\frac{x^w}{w} \mathrm{d}w \right| & \ll \int_{-T}^{T} |t|^{2\epsilon} x^{\frac{1}{2}+r+\epsilon} \frac{dt}{|t|} \ll T^{2\epsilon} x^{\frac{1}{2}+r+\epsilon},  \\
\left| \int_{1+r+\epsilon-iT}^{\frac{1}{2}  + r + \epsilon-iT}  \frac{1}{\zeta(w)\zeta(w-r)}\frac{x^w}{w} \mathrm{d}w \right| &  \ll  \int_{1+r+\epsilon}^{\frac{1}{2} + r + \epsilon } x^{\sigma} T^{2\epsilon-1} d\sigma \ll T^{2\epsilon-1} x^{2+r+\epsilon}.  
\end{align*}
Utilizing these bounds in \eqref{asymptotic for RH},  we arrive 
\begin{align*}
\sum_{n \leq x} \sigma_{r}^{-1}(n) & =  O\left( T^{2\epsilon-1} x^{2+r+\epsilon} \right) + O\left( T^{2\epsilon} x^{\frac{1}{2}+r+\epsilon} \right)+  O\left(\frac{x^{1+r+\epsilon}}{T}\right) + O\big(x^{r+\epsilon}\big) \\
& =  O\left( T^{2\epsilon-1} x^{2+r+\epsilon} \right) + O\left( T^{2\epsilon} x^{\frac{1}{2}+r+\epsilon} \right).  
\end{align*}
Taking $T = x^{\frac{3}{2}}$,  the result follows.  

\end{proof}

Now we define an important special function,  namely,  the Meijer $G$-function \cite[p.~415, Definition 16.17]{NIST}, which is a generalization of many well-known special functions.
Let $m,n,p,q$ be non-negative integers with $0\leq m \leq q$, $0\leq n \leq p$. Let $a_1, \cdots, a_p$ and $b_1, \cdots, b_q$ be complex numbers with $a_i - b_j \not\in \mathbb{N}$ for $1 \leq i \leq n$ and $1 \leq j \leq m$. Then the Meijer $G$-function is defined by 
\begin{align}\label{Meijer-G}
G_{p,q}^{\,m,n} \!\left(  \,\begin{matrix} a_1,\cdots , a_p \\ b_1, \cdots , b_q \end{matrix} \; \Big| z   \right) := \frac{1}{2 \pi i} \int_L \frac{\prod_{j=1}^m \Gamma(b_j - s) \prod_{j=1}^n \Gamma(1 - a_j +s) z^s  } {\prod_{j=m+1}^q \Gamma(1 - b_j + s) \prod_{j=n+1}^p \Gamma(a_j - s)}\mathrm{d}s,
\end{align}
where the line of integration $L$, from $-i \infty$ to $+i \infty$,   separates the poles of the factors $\Gamma(1-a_j+s)$  from those of the factors $\Gamma(b_j-s)$.   The above integral converges if $p+q < 2(m+n)$ and $|\arg(z)| < \left(m+n - \frac{p+q}{2} \right) \pi$.

Next,  we state Slater's theorem \cite[p.~415, Equation 16.17.2]{NIST}, which will allow us to express Meijer $G$-function in terms of generalized hypergeometric functions. 
If $p \leq q$ and $ b_j - b_k \not\in \mathbb{Z}$ for $j\neq k$, $1 \leq j, k \leq m$, then 
\begin{align}\label{Slater}
& G_{p,q}^{\,m,n} \!\left(   \,\begin{matrix} a_1, \cdots , a_p \\ b_1, \cdots , b_q \end{matrix} \; \Big| z   \right) \nonumber  \\
& \quad = \sum_{k=1}^{m} A_{p,q,k}^{m,n}(z) {}_p F_{q-1} \left(  \begin{matrix}
1+b_k - a_1,\cdots, 1+ b_k - a_p \\
1+ b_k - b_1, \cdots, *, \cdots, 1 + b_k - b_q 
\end{matrix} \Big| (-1)^{p-m-n} z  \right),  
\end{align}
where $*$ means that the entry $1 + b_k - b_k$ is omitted and 
\begin{align*}
A_{p,q,k}^{m,n}(z) := \frac{ z^{b_k}  \prod_{ j=1,  j\neq k}^{m} \Gamma(b_j - b_k ) \prod_{j=1}^n   \Gamma( 1 + b_k -a_j ) }{ \prod_{j=m+1}^{q} \Gamma(1 + b_k - b_{j}) \prod_{j=n+1}^{p} \Gamma(a_{j} - b_k)  }.
\end{align*}

 \section{Proof of Main Result}
 
\begin{proof}[Theorem \ref{General_Identity}][]
Shifting the line of integration from $\Re(s) > 0$ to $\Re(s)= d \in (-1,0)$ in the inverse mellin transform of $\Gamma(s)$, we have  
\begin{align}\label{inverse mellin}
e^{-x} - 1 = \frac{1}{2 \pi i}  \int_{d-i\infty}^{d+i \infty}\Gamma(s) x^{-s} {\rm d}s. 
\end{align}
We choose $k_0$  such that the following series
$$
\sum_{n = 1}^{\infty} \frac{a^{-1}(n)}{n^k},   \quad \sum_{n=1}^{\infty} \frac{b^{-1}(n)}{n^k} 
$$ 
are convergent for $k=k_0$ and absolutely convergent for all $k > k_0$. 
Then for $k \geq k_0$,  $x>0$, we use \eqref{inverse mellin} to write 
\begin{align}\label{main equation for general CN}
\sum_{n=1}^{\infty} \frac{a^{-1}(n)}{n^k} \exp \left({-\frac{ x}{n^{1/A}}}\right)&=\sum_{n=1}^{\infty} \frac{a^{-1}(n)}{n^k} + \sum_{n=1}^{\infty} \frac{a^{-1}(n)}{n^k} \left(\exp\left({-\frac{ x}{n^{1/A}}}\right) -1 \right) \nonumber \\
&= \frac{1}{\phi(k) }+\sum_{n=1}^{\infty} \frac{a^{-1}(n)}{n^k}\frac{1}{2\pi i} \int_{d-i\infty}^{d+i \infty}\Gamma(s) \left(\frac{ x}{n^{1/A}}\right)^{-s} {\rm d}s  \nonumber \\
& =  \frac{1}{\phi(k)} +\frac{1}{2\pi i}\int_{d-i \infty}^{d+i \infty}\frac{\Gamma(s)}{\phi(k-s/A)} x^{-s} {\rm d}s. 
\end{align}
Since $\Re(k-s/A) > k_0$, hence $1/\phi(k-s/A)=  \sum_{n=1}^{\infty}\frac{a^{-1}(n)}{n^{k-s/A}}$ is absolutely and uniformly convergent in any compact subset of the domain. Thus, interchanging the order of summation and integration is justified. We now intend to solve the following vertical integral, 
\begin{align*}
J_{k, A}(x):= \frac{1}{2\pi i}\int_{d-i \infty}^{d+i \infty} \frac{\Gamma(s)}{\phi(k-s/A)} x^{-s} {\rm d}s.
\end{align*}
We shall investigate the poles of the integrand function in the first place.  One can observe that $\Gamma(s)$ has simple poles at non-positive integers. Let $S$ be the set of all the trivial zeros of $\phi(s)$ and to collect the contribution of the residual term due to non-trivial zeros of $\phi(k-s/A)$, we change the line of integration from $ \Re(s) = d \in (-1,0)$ to $ \Re(s) = \lambda \in (Ak + B, Ak+ B + \epsilon)$ with $0 < \epsilon < 1$. 
For that, we consider a rectangular contour $\mathcal{C}$ determined by the line segments $[ \lambda-iT,  \lambda+iT], [\lambda+i T, d + i T], [d + i T,  d - i T]$, and $ [d - i T, \lambda-iT]$,  where  $T$ is some large positive number. 

\begin{center}
	\begin{tikzpicture}[very thick,decoration={
			markings,
			mark=at position 0.6 with {\arrow{>}}}] 
		\draw[thick,dashed,postaction={decorate}] (-1.2,-2)--(7,-2) node[below right, black] {$\lambda-i T$};
		\draw[thick,dashed,postaction={decorate}] (7,-2)--(7,2)  node[right, black] {$\lambda+iT$} ;
		\draw[thick,dashed,postaction={decorate}] (7,2)--(-1.2,2) node[left, black] {$d+i T$}; 
		\draw[thick,dashed,postaction={decorate}] (-1.2,2)--(-1.2,-2)  node[below left, black] {$d-i T$}; 
		\draw[thick, <->] (-5,0) -- (9,0) coordinate (xaxis);
		\draw[thick, <->] (-0,-4) -- (-0,4)node[midway, above right, black] {\tiny $0$} coordinate (yaxis);
		\draw (2,0.1)--(2,-0.1) node[midway, above, black] {\tiny1} ;
		\draw (-2,0.1)--(-2,-0.1) node[midway, above, black] {\tiny-1} ;
		\draw (6,0.1)--(6,-0.1) node[midway, above, black] {\tiny $ Ak+ B$} ;
		\draw (8.5,0.1)--(8.5,-0.1) node[midway, above, black] {\tiny $  Ak+B+\epsilon $} ;
		\node[below] at (xaxis) { $\Re(s)$};
		\node[right]  at (yaxis) { $\Im(s)$};
	\end{tikzpicture}
\end{center}

We now use Cauchy's residue theorem to get
\begin{align}\label{Use of CRT}
    \frac{1}{2\pi i}\int_{\mathcal{C}}\frac{  \Gamma(s)  }{\phi(k-s/A) } x^{-s} {\rm d}s = R_{0} + R_{t} +  \sum_{|\Im(\rho)|<T} {R}_{\rho}(x),
\end{align}
where $R_0$ stands for the residual term corresponding to the simple pole of $\Gamma(s)$ at $s=0$, $R_t $ is the residual term due to trivial zeros of $\phi(k-s/A)$ with $0\leq \Re\left(  k- \frac{s}{A} \right) \leq Ak+B$ and ${R}_{\rho}(x)$ is the residual term corresponding to the non-trivial zero  $\rho$ of $\phi(k-s/A)$ .  Next goal is to show that the horizontal integrals 
\begin{align*}
H_1(T,A) = \frac{1}{2 \pi i } \int_{\lambda + i T}^{d+i T} \frac{  \Gamma(s)  }{\phi(k-s/A) } x^{-s} {\rm d}s, \\
H_2(T,A) = \frac{1}{2 \pi i } \int_{d- i T}^{\lambda - i T} \frac{  \Gamma(s)  }{\phi(k-s/A) } x^{-s} {\rm d}s, 
\end{align*}
will vanish as $T \rightarrow \infty $. This follows if we make use of Stirling's formula for $\Gamma(s)$,  that is,  Lemma \ref{Stirling} along with Lemma \ref{L function bound to vanish horizontal integral}.
Thus,  letting $T\rightarrow \infty$ in \eqref{Use of CRT},  we arrive at 
\begin{align}\label{two vertical integral_for_general_CN}
\frac{1}{2 \pi i } \int_{d - i \infty}^{d+i \infty} \frac{  \Gamma(s)  }{\phi(k-s/A) } x^{-s} {\rm d}s = \frac{1}{2 \pi i } \int_{\lambda - i \infty}^{\lambda+i \infty} \frac{  \Gamma(s)  }{\phi(k-s/A) } x^{-s} {\rm d}s - R_0 - R_t -  \sum_{\rho} {R}_{\rho}(x),
\end{align}
where the sum over $\rho$ runs through all the non-trivial zeros of $\phi(k-s/A)$. The residue at $s=0$ can be evaluated as 
\begin{align}\label{Residue_s=0_for_general_CN}
R_{0} = \lim_{s \rightarrow 0}  \frac{  s \Gamma(s)  }{\phi(k-s/A) } x^{-s}= \frac{1}{\phi(k)}. 
\end{align}
Considering the non-trivial zeros of $\phi(s)$ to be simple, we get
\begin{align}\label{Residue non trivial for general CN}
\sum_{\rho} {R}_{\rho}(x) = \sum_{\rho} \lim_{ s \rightarrow A(k-\rho)} \frac{ \left( s-  A(k-\rho) \right) \Gamma(s)   }{\phi(k-s/A)} x^{-s} = -A \sum_{\rho} \frac{ \Gamma(A(k-\rho))   }{\phi'(\rho)} x^{-A(k-\rho)}.
\end{align}
Substituting \eqref{two vertical integral_for_general_CN}, \eqref{Residue_s=0_for_general_CN} and \eqref{Residue non trivial for general CN} in \eqref{main equation for general CN}, we get  
\begin{align}\label{identity_half_obtained}
\sum_{n=1}^{\infty} \frac{a^{-1}(n)}{n^k} \exp \left({-\frac{ x}{n^{1/A}}}\right) & = \frac{1}{2 \pi i } \int_{\lambda - i \infty}^{\lambda+i \infty} \frac{  \Gamma(s)  }{\phi(k-s/A) } x^{-s} {\rm d}s - R_t \nonumber \\
& +A \sum_{\rho} \frac{ \Gamma(A(k-\rho))}{\phi'(\rho)} x^{-A(k-\rho)}.
\end{align}
We shall now focus on the derivation of the line integral present above.  Let us define
\begin{align}\label{right_vertical_integral_for_general_CN}
I_{k,A}(x) :=  \frac{1}{2 \pi i } \int_{\lambda - i \infty}^{\lambda+i \infty} \frac{  \Gamma(s)  }{\phi\left(k- \frac{s}{A}\right) } x^{-s} {\rm d}s. 
\end{align}
Here, we bring into play the role of the functional equation for $\phi(s)$. Replacing $s$ by $k-s/A$ in \eqref{modified functional equation for phi and psi},  it can be seen that
\begin{align}\label{1/phi for general CN}
\frac{1}{\phi(k- \frac{s}{A})} = \frac{c^{2k-\delta-\frac{2s}{A}} \Gamma(Ak+B-s)}{\nu \Gamma(A\delta - Ak + B + s) \psi(\delta - k +\frac{s}{A})}. 
\end{align}
Utilizing \eqref{1/phi for general CN} in \eqref{right_vertical_integral_for_general_CN}, we get 
\begin{align}\label{another form of right vertical integral for CN}
I_{k,A}(x) = \frac{1}{\nu}  \frac{c^{2k- \delta}}{ 2 \pi i } \int_{\lambda - i \infty}^{\lambda+i \infty} \frac{  \Gamma(s) \Gamma(Ak+B-s) }{\Gamma(A(\delta-k)+B+s)\psi(\delta - k+ \frac{s}{A}) } (c^{2/A}x)^{-s} {\rm d}s. 
\end{align}
Note that $\Re(\delta - k +\frac{s}{A}) > \delta + \frac{B}{A}$. Thus, to use the series expansion of $\frac{1}{\psi(\delta - k+ \frac{s}{A})} = \sum_{n= 1}^\infty \frac{b^{-1}(n)}{n^{s/A}} n^{k-\delta}$ in \eqref{another form of right vertical integral for CN},  constraints us to choose $k_0$ such that $k_0=\delta + \frac{B}{A} $. Then interchanging the order of summation and integration leads us to obtain that,
\begin{align}\label{I_{k,A}}
I_{k,A}(x) = \frac{ c^{2k-\delta}}{\nu} \sum_{n = 1}^\infty \frac{b^{-1}(n)}{ n^{\delta - k}} V_{k,A}(X_n),
\end{align}
where
\begin{align}\label{final form of right vertical integral for general cn}
 V_{k,A}(X_n) = \frac{1}{2\pi i}\int_{\lambda - i \infty}^{\lambda+i \infty} \frac{  \Gamma(s) \Gamma(Ak+B-s) }{\Gamma(A(\delta - k)+ B +s) } X_n^{-s} {\rm d}s,
\end{align}
and $X_n = (c^2 n)^{1/A}x$. We shall now focus on $V_{k,A}(X_n)$ and we aim at writing this vertical integral in terms of the Meijer $G$-function.  However,  it can be seen that the line of integration $ Ak+ B < \Re(s)=\lambda <  Ak+ B + \epsilon$ does not separate the poles of $\Gamma(s)$  from that of $\Gamma(Ak+B-s)$.
Thus,  to use the definition of the Meijer $G$-function \eqref{Meijer-G}, 
we relocate the line of integration from $\Re(s) = \lambda$ to $0< \Re(s) = \beta < Ak + B$ to ensure that the line of integration separates the poles of $\Gamma(s)$  and $\Gamma(Ak+B-s)$.  Finally,  we consider a contour  $\mathcal{D}_1$ determined by the line segments $[\lambda-iT,\lambda+iT],  [\lambda+i T,  \beta+iT], [\beta+iT, \beta-iT]$,  and $[\beta- iT,  \lambda-iT]$.

\begin{center}
	\begin{tikzpicture}[very thick,decoration={
			markings,
			mark=at position 0.6 with {\arrow{>}}}] 
		\draw[thick,dashed,postaction={decorate}] (2.2,-3)--(7,-3) node[below right, black] {$\lambda-i T$};
		\draw[thick,dashed,postaction={decorate}] (7,-3)--(7,3)  node[right, black] {$\lambda+iT$} ;
		\draw[thick,dashed,postaction={decorate}] (7,3)--(2.2,3) node[left, black] {$\beta+i T$}; 
		\draw[thick,dashed,postaction={decorate}] (2.2,3)--(2.2,-3)  node[below left, black] {$\beta-i T$}; 
		\draw[thick, <->] (-5,0) -- (9,0) coordinate (xaxis);
		\draw[thick, <->] (-0,-4) -- (-0,4)node[midway, above right, black] {\tiny $0$} coordinate (yaxis);
		\draw (6,0.1)--(6,-0.1) node[midway, above, black] {\tiny $Ak+ B$} ;
		\draw (8.5,0.1)--(8.5,-0.1) node[midway, above, black] {\tiny $Ak+ B+\epsilon $} ;
		\node[below] at (xaxis) {$\Re(s)$};
		\node[right]  at (yaxis) {$\Im(s)$};
	\end{tikzpicture}
\end{center} 

Again,  utilizing the Cauchy's residue theorem,  we have
\begin{align}\label{2nd_application_CRT_for_general_CN}
\frac{1}{2 \pi i} \left(  \int_{\lambda-iT}^{\lambda+iT} + \int_{\lambda+i T}^{\beta + i T} + \int_{\beta + i T}^{\beta - i T} + \int_{\beta - i T}^{\lambda - i T}\right)  \frac{  \Gamma(s) \Gamma(Ak+B-s) }{\Gamma(A(\delta - k)+ B +s) } X_n^{-s} {\rm d}s = R_{Ak+B},
\end{align}
where $R_{Ak+B}$ denotes the residual term due to the simple pole of $\Gamma(Ak+B-s)$ at $s = Ak+B$.  We can easily verify that 
\begin{align}\label{Residue at Ak+B}
R_{Ak+B} = -\frac{\Gamma(Ak+B)}{\Gamma(A\delta + 2B)}X_n^{-Ak-B}.
\end{align}
Letting  $T \rightarrow \infty$,  one can show that horizontal integrals vanish.  Thus substituting \eqref{Residue at Ak+B} in \eqref{2nd_application_CRT_for_general_CN} and using \eqref{final form of right vertical integral for general cn}, it  leads us to obtain that
\begin{align}\label{After CRT}
 V_{k,A}(X_n) =   \frac{1}{2\pi i} \int_{\beta - i \infty}^{\beta +i \infty} \frac{  \Gamma(s) \Gamma(Ak+B-s) }{\Gamma(A(\delta - k)+ B +s) } X_n^{-s} {\rm d}s-\frac{\Gamma(Ak+B)}{\Gamma(A\delta + 2B)}X_n^{-Ak-B}.
\end{align}
As we know that the line of integration $\Re(s) = \beta$ separates the poles of $\Gamma(s)$ from the poles of $\Gamma(Ak+B-s)$,  so we are all set to write the vertical integral,  in \eqref{After CRT},  in terms of the Meijer $G$-function \eqref{Meijer-G} with $n=p=m=1, q=2$ and $a_{1} = 1, b_{1} = Ak+B$ and $ b_{2} = 1-A\delta+Ak-B$. Thus, we have 
\begin{align}\label{in terms of Meijer G}
 \frac{1}{2 \pi i}\int_{\beta - i \infty}^{\beta + i \infty}   \frac{  \Gamma(s) \Gamma(Ak+B-s) }{\Gamma(A(\delta - k)+ B +s) } X_n^{-s} {\rm d}s   = G_{1,2}^{1,1} \left(\begin{matrix} 1 \\ Ak+B,  1-A\delta+Ak-B  \end{matrix} \Big| \frac{1}{X_n}\right).
\end{align}
Here one can easily check that $p+q < 2(m+n)$ and  $|\arg(\frac{1}{X_n})| < \left(m+n - \frac{p+q}{2} \right) \pi$. Thus all the necessary conditions for the convergence of the above Meijer $G$-function are satisfied.  Now invoking Slater's theorem \eqref{Slater}, one can see that
\begin{align}\label{use of slater for general cn}
 G_{1,2}^{1,1} \left(\begin{matrix} 1 \\ Ak+B,  1-A\delta+Ak-B  \end{matrix} \Big| \frac{1}{X_n}\right) = \frac{1}{ X_n^{Ak+B}} \frac{\Gamma(Ak+B)}{\Gamma(A\delta+ 2 B )}  {}_1F_{1} \left( \begin{matrix}
Ak+B \\ A\delta+2B  \end{matrix}  \Big| -\frac{1}{X_n} \right).
\end{align}
Finally,  substituting \eqref{use of slater for general cn} in \eqref{in terms of Meijer G} and together with \eqref{After CRT},  \eqref{I_{k,A}},   \eqref{right_vertical_integral_for_general_CN} and then combining all these terms in \eqref{identity_half_obtained},  we  complete the proof of Theorem \ref{General_Identity}.
\end{proof}

\begin{proof}[Corollary \ref{general_alpha_beta_identity}][]
Letting $k = \delta + \frac{B}{A}$ and $x=\beta >0$ with $\alpha \beta = \frac{1}{c^{2/A}}$ in Theorem \ref{General_Identity} and then multiplying by $\sqrt{\nu}\beta^{\frac{A\delta +2 B}{B}}$ throughout and using the fact that $|\nu|=1$,    one can complete the proof of this result.  
\end{proof}

\begin{proof}[Corollary \ref{corolary for r_2(n)}][]
The identity \eqref{main identity for r_2(n)} represents a particular instance of the identity \eqref{main identity} with $a(n) = b(n) = r_2(n)$,  where $r_2(n)$ is defined in Example \ref{r_b(n)} for $j = 2$.  Note that $\zeta_{id}(2,s)$ satisfies the following functional equation 
\begin{align*}
\pi^{-s} \Gamma(s) \zeta_{id}(2,s) = \pi^{-(1-s)} \Gamma\left(1- s\right) \zeta_{id}\left(2,1- s\right). 
\end{align*}
This corresponds to \eqref{modified functional equation for phi and psi} for $a(n) = b(n) = r_{2}(n),  c = \frac{1}{\pi}, A=1, B=0, \delta = 1$ and $\nu = 1.$ Now substituting these values in Theorem \ref{General_Identity} and noting that the term $R_t$ is zero as we do not encounter any trivial zeros of $\zeta_{id}(2,k-s)$ with $0 \leq \Re(k-s)\leq k$,  and also $\sum_{n=1}^{\infty} \frac{r_2^{-1}(n)}{n}=0$ since we know the relation \eqref{Epstein in terms of zeta},  we complete the proof.
\end{proof}

\begin{proof}[Corollary \ref{corollary for dedeking zeta}][]
This scenario represents a special case of \eqref{main identity} for the Dedekind zeta function for the imaginary quadratic field $Q(\sqrt{-D})$,  which is defined in Example \ref{example dedekind}.  From the functional equation \eqref{fn eq dedekind},  it is clear that $c= \frac{\sqrt{D}}{2\pi}, A=1, B=0, \delta = 1$ and $\nu = 1$.  Now putting these values in Theorem \ref{General_Identity} and  observing that $R_t=0$ since we do not encounter a trivial zero of $\zeta_{Q(\sqrt{-D})}(k-s)$ with  $0 \leq \Re(k-s)\leq k$,   and $\sum_{n=1}^{\infty} \frac{\mathsf{a}_n^{-1}}{n}=0$,  one can complete the proof. 
\end{proof}

\begin{proof}[Theorem \ref{corollary for zeta(s) zeta(s-r)}][]
This identity is a special case of the identity \eqref{main identity} with  $a(n) = b(n) = \sigma_r(n),  c = \frac{1}{2\pi}, A=1, B=0, \delta = r+1$ and $\nu = (-1)^{\frac{r+1}{2}}$ follows from Example \ref{def zeta(s) zeta(s-r)}.  
In this case $\phi(s)= \zeta(s) \zeta(s-r)$,  where $r \geq 1$ odd integer.  Thus,  the trivial zeros of $\phi(s)$ are at $\{ -2m,  r-2m \}$,  with $m \in \mathbb{N}$.  Hence, the trivial zeros of $\phi(k-s)$ with $0 \leq \Re(k-s) \leq k$ are at $k-r+2m$ such that  $ 1 \leq m \leq \lfloor \frac{r}{2} \rfloor$.  
Therefore,  from \eqref{Use of CRT},  one can see that the residual term $R_t$ becomes
{\allowdisplaybreaks \begin{align}
R_t & =  \sum_{m=1}^{ \lfloor \frac{r}{2} \rfloor} \lim_{s \rightarrow k-r+2m} \frac{(s- (k-r+2m))  \Gamma(s) x^{-s}}{\zeta(k-s)\zeta(k-s-r)} \nonumber  \\
& = - \sum_{m=1}^{ \lfloor \frac{r}{2} \rfloor}  \frac{\Gamma(k-r+2m)x^{r-k-2m}}{\zeta(r-2m)\zeta^{'}(-2m)} \nonumber \\
&  = \sum_{m=1}^{[\frac{r}{2}]} \frac{\Gamma(k-r+2m)}{\zeta(r-2m) \zeta(2m+1) } \frac{(-1)^m 2(2\pi)^{2m}}{(2m)!} x^{r-k-2m}.  \label{Final R_t}
\end{align} }
In the final step we utilized the identity
\begin{align*}
\zeta^{'}(-2m) = \frac{(-1)^m (2m)! \zeta(2m+1)}{2(2\pi)^{2m}}.
\end{align*}
From \eqref{Use of CRT}, one can see that the integrand function $\frac{\Gamma(s)x^{-s}}{\phi(k-s/A)}$,  and in this case $A=1$ and  $\phi(k-s)= \zeta(k-s) \zeta(k-s-r)$,  so the residual term $R_{\rho}(x)$ will depend on the non-trivial zeros of $\zeta(k-s)$ as well as $\zeta(k-s-r)$.  Thus,  if we assume that the non-trivial zeros of $\zeta(s)$ are simple,  then one can show that  
\begin{align}\label{R_rho}
\sum_{\rho} R_{\rho}(x) = -\sum_{\rho} \frac{x^{\rho-k}}{\zeta^{'}(\rho)} \bigg[\frac{\Gamma(k-\rho)}{\zeta(\rho -r)} + \frac{x^r \Gamma(k-r-\rho)}{\zeta(r+\rho)}\bigg],
\end{align}
where the sum over $\rho$ runs through the non-trivial zeros of $\zeta(s)$.  Again note that $\phi(s)=\zeta(s)\zeta(s-r)$ has a simple pole at $1+r$,  thus  we will have $\sum_{n=1}^\infty  \frac{\sigma^{-1}_{r}(n)}{n^{r+1}}=0$.  Substituting the above values \eqref{Final R_t}-\eqref{R_rho} of $R_t$ and $R_{\rho}(x)$ in Theorem \ref{General_Identity},  we complete the proof of Theorem \ref{corollary for zeta(s) zeta(s-r)}.   
\end{proof}

\begin{proof}[Corollary \ref{alpha-beta form for sigma}][]
Substituting $k=r+1$,  $x=\alpha$, $\alpha \beta = 4\pi^2$ and multiplying by $\alpha^\frac{r+1}{2}$ on both sides of Theorem \ref{corollary for zeta(s) zeta(s-r)},  we obtain the result.  
\end{proof}

\begin{proof}[Theorem \ref{corollary for cusp form}][]
This identity represents a particular instance of \eqref{General_Identity}, for the normalized Fourier coefficients of the holomorphic Hecke eigenform $f(z)$. Here, we take $a(n) = b(n) =  \lambda_f(n) $ and the corresponding $L$-function $L(f,s)$ is defined in Example \ref{Def cusp}. In this case, we encounter a pole of the integrand $\frac{\Gamma(s)x^{-s}}{L(f,k-s)}$ due to the trivial zero of $L(f,  k-s)$ at $s = k+ \frac{\omega -1}{2}$ which satisfies the condition $0 \leq \Re(k-s)\leq k+ \frac{w-1}{2}$. Thus, we get our residual term $R_t$ to be 
\begin{align*}
R_t &= \lim_{s \rightarrow k+\frac{\omega -1}{2}} \frac{\left(s-\left( k +\frac{\omega -1}{2} \right) \right)\Gamma(s)x^{-s}}{L(f, k-s)} \\
&= \lim_{s \rightarrow k+\frac{\omega -1}{2}} \frac{\left(s-\left( k +\frac{\omega -1}{2} \right) \right)\Gamma\left(k-s+\frac{\omega -1}{2}\right)\Gamma(s)(2\pi)^{1+2s-2k}x^{-s}}{\Gamma\left(1-k+s+\frac{\omega -1}{2}\right)L(f,1-k+s)} \\
& = -\frac{(2\pi)^{\omega}}{x^{k+\frac{\omega -1}{2}}} \frac{\Gamma\left(k+\frac{\omega -1}{2}\right)}{\Gamma(\omega)} \frac{1}{L\left(f,\frac{\omega +1}{2}\right)}.
\end{align*}
Here, in the penultimate step, we have used functional equation \eqref{fn eq cusp form} for $L(f,s)$. Substituting this value of $R_t$ along with $ c = \frac{1}{2\pi}, A=1, B=\frac{\omega -1}{2}$, $ \delta = 1$ and $\nu = 1$ in Theorem \ref{main identity} and using $\frac{1}{L\left(f,\frac{\omega +1}{2}\right)} = \sum_{n=1}^{\infty}\frac{\lambda_f^{-1}(n)}{n^{\frac{\omega +1}{2}}}$, we obtain 
\eqref{main identity for cusp form}.
\end{proof}

\begin{proof}[Corollary \ref{alpha beta form for cusp forms}][]
Letting $k = \frac{\omega +1}{2}$, $x = \beta$,  $\alpha \beta = 4\pi^2 $ in Theorem \ref{corollary for cusp form} and multiplying throughout by $\beta^{\frac{\omega}{2}}$, we obtain the result.  
\end{proof}

\begin{proof}[Corollary \ref{corollary for Ramanujan tau function}][]
This result is an immediate implication of Corollary \ref{alpha beta form for cusp forms} for the Ramanujan cusp form of weight $12$.  Thus,  substituting $\lambda_f(n)=\tau_0(n)$ and $\omega=12$,  we get our modular relation \eqref{identity for Ramanujan tau}.
\end{proof}

\begin{proof}[Theorem \ref{conjecture is true}][]

Let $L(f,s)$ be a nice $L$-function defined as in  \eqref{def nyc L function}. 
First,  we begin by presupposing that the grand Riemann hypothesis (GRH) for $L(f,s)$ holds true. Then from \eqref{merten bound general},  for any $\epsilon >0$,  we have
\begin{align}\label{merten bound general1}
G(x) := \sum_{ 1 \leq n \leq x} \mu_f(n) \ll_{\epsilon, f} x^{\frac{1}{2} + \epsilon}.
\end{align}
Now, we employ Euler's partial summation formula,  i.e.,  Lemma \ref{Euler's summation} with $a(n) =  \mu_f(n)$ and $f(t) = t^{-k}$ to see that
\begin{align}\label{H(m,n)}
H(m,n) := \sum_{i=m}^{n} \frac{\mu_f(i)}{i^k} & =  G(n) f(n)- G(m-1) f(m-1) - \int_{m-1}^n G(t) f'(t) {\rm d}t.
\end{align}
Utilizing \eqref{merten bound general1} in \eqref{H(m,n)}, one can obtain 
\begin{align}\label{bound H(m,n)}
H(m,n) = O_{\epsilon,k}\left(m^{\frac{1}{2}-k+\epsilon}\right).
\end{align}
The above bound is uniform in $n$. Our main objective is to determine the bound for the following infinite series,  under the assumption of the GRH for $L(f,s)$,  
\begin{align*}
P_{k,f,\ell}(x) :=   \sum_{n=1}^{\infty} \frac{\mu_f(n)}{n^{k}} \exp \left(- \frac{ x}{n^{\ell}}\right) .
\end{align*}
For simplicity, we replace $x$ by $x^{\ell}$ and separate the sum into finite and infinite part as 
\begin{align}\label{Q_1 + Q_2}
P_{k,f,\ell}(x^{\ell}) &= \sum_{n=1}^{m -1}\frac{\mu_f(n)}{n^{k}} \exp \left(- \frac{ x^{\ell}}{n^{\ell}}\right) + \sum_{n=m}^{\infty}\frac{\mu_f(n)}{n^{k}} \exp \left(- \frac{ x^{\ell}}{n^{\ell}}\right) \nonumber \\
&:= Q_1(x^{\ell}) + Q_2(x^{\ell}).
\end{align}
where $ m = [x^{1-\epsilon}]+1$.   To get an estimate for $Q_1(x^{\ell})$, we write 
\begin{align*}
|Q_1(x^{\ell})| = \bigg|\sum_{n=1}^{m -1}\frac{\mu_f(n)}{n^{k}} \exp \left(- \frac{ x^{\ell}}{n^{\ell}}\right) \bigg| & \leq \sum_{n=1}^{m -1} |H(n,n)| \exp\left(-\frac{x^{\ell}}{m^{\ell}}\right) \\
& \ll \sum_{n=1}^{m-1} n^{\frac{1}{2}-k+\epsilon} \exp\left(-\frac{x^{\ell}}{m^{\ell}}\right) \\
& \ll_{k, \epsilon} m^{\frac{3}{2}-k+\epsilon} \exp\left(-\frac{x^{\ell}}{m^{\ell}}\right). 
\end{align*}
This gives 
\begin{align}\label{bound Q_1}
Q_1(x^{\ell}) = O\left(\exp (-x^{\ell \epsilon}) x^{(1-\epsilon)(\frac{3}{2}-k+\epsilon)} \right),
\end{align}
as $m - 1 = [x^{1-\epsilon}]$.
We shall now try to estimate $Q_2(x^{\ell})$. 
For sufficiently large integer $N > m$, we write 
\begin{align*}
\sum_{n=m}^{N} \frac{\mu_f(n)}{n^{k}} \exp \left(- \frac{ x^{\ell}}{n^{\ell}}\right)   &= \sum_{n=m}^{N-1} H(m,n) \left(  \exp\left(-\frac{x^{\ell}}{n^{\ell}}\right) -  \exp\left(-\frac{x^{\ell}}{(n+1)^{\ell}}\right)\right) \nonumber \\ & + H(m,n) \exp\left(-\frac{x^{\ell}}{N^{\ell}}\right).
\end{align*}
Allowing $N \rightarrow \infty $ and invoking the bound \eqref{bound H(m,n)} for $H(m,n)$, it can be inferred that 
\begin{align}\label{Q_2}
Q_2(x^{\ell})= Q_3(x^{\ell}) +  O_{\epsilon,k}\left(m^{\frac{1}{2}-k+\epsilon}\right),
\end{align}
where 
\begin{align*}
Q_3(x^{\ell}) =  \sum_{n=m}^{N-1} H(m,n) \left(  \exp\left(-\frac{x^{\ell}}{n^{\ell}}\right) -  \exp\left(-\frac{x^{\ell}}{(n+1)^{\ell}}\right)\right).
\end{align*}
Now to simplify further, we use Cauchy's mean value theorem with the function $T(z) =  \exp\left(-\frac{x^{\ell}}{z^{\ell}}\right).$ One can find $z_n \in (n,n+1)$ such that $$T(n) - T(n+1) = -T^{'}(z_n) = - \frac{\ell x^{\ell}}{z_n^{\ell +1}}  \exp\left(-\frac{x^{\ell}}{z_n^{\ell}}\right).$$ 
Employing the above fact in conjunction with \eqref{bound H(m,n)}, we obtain
{\allowdisplaybreaks \begin{align}\label{bound Q_3}
|Q_3(x^{\ell})| 
&\ll_{\epsilon,\ell} m^{\frac{1}{2}-k+\epsilon}  \sum_{n=m}^{\infty} \frac{x^\ell}{z_n^{\ell+1}}  \exp\left(-\frac{x^{\ell}}{z_n^{\ell}}\right) \nonumber \\ 
& \ll_{\epsilon, \ell}  m^{\frac{1}{2}-k+\epsilon} \sum_{n=m}^\infty \frac{x^{\ell}}{n^{\ell+1}} \nonumber \\ 
& \ll_{\epsilon, \ell} x^{\frac{1}{2}-k+\epsilon} \frac{x^\ell}{m^\ell}  \ll_{\epsilon, \ell} x^{\frac{1}{2}-k+\epsilon'}
\end{align}}
as $m \sim x^{1-\epsilon}$.  Now utilizing \eqref{bound Q_3} in \eqref{Q_2}, we see that 
\begin{align}\label{bound Q_2}
|Q_2(x^{\ell})| = O_{\epsilon,k, \ell}\left(m^{\frac{1}{2}-k+\epsilon}\right).  
\end{align}
Finally, substituting the bounds \eqref{bound Q_1} and \eqref{bound Q_2} in \eqref{Q_1 + Q_2}, one can see that due to exponential decay, the bound for $Q_1(x^{\ell})$ goes to zero much faster than the bound for $Q_2(x^{\ell})$ as $x \rightarrow \infty $, and therefore
\begin{align*}
P_{k,f, \ell}(x^{\ell}) = O_{\epsilon,k, \ell}\left(x^{\frac{1}{2}-k+\epsilon}\right).
\end{align*}
Replace $x$ by $x^{1/\ell}$,  we get the desired bound \eqref{conjecture main bound}.

Now, turning to the converse aspect, that is, assuming the validity of the bound \eqref{conjecture main bound}, we aim to show that the GRH for $L(f,s)$ follows. 
Utilizing Lemma \ref{main lemma for converse part}, we deduced that 
\begin{align}\label{idenity for extending region in general}
 L(f, \ell s+k)\int_{0}^{\infty}x^{-s-1}P_{k,f,\ell}(x) \mathrm{d}x = \Gamma(-s),
\end{align}
valid for $\Re(s) > \frac{1-k}{\ell}$ except for non-negative integers.   We aim to  extend the validity of \eqref{idenity for extending region in general} towards the left half plane. To accomplish this, we choose a sufficiently large positive real number $R$ and rewrite the expression \eqref{idenity for extending region in general} as 
\begin{align*}
 L(f, \ell s+k)\left(\int_{0}^{R}+ \int_{R}^{\infty} \right)x^{-s-1}P_{k,f,\ell}(x) \mathrm{d}x = \Gamma(-s).
\end{align*}
Utilizing the bound \eqref{conjecture main bound} for $P_{k,f,\ell}(x)$, it is evident that unbounded part possesses analyticity for $\Re(s) > \frac{1}{2\ell} - \frac{k}{\ell}$. 
Consequently, identity \eqref{idenity for extending region in general} is analytic in the strip $\frac{1}{2\ell} - \frac{k}{\ell} < \Re(s) < \frac{1-k}{\ell} $. 
 Considering the fact that $\Gamma(s)$ never attains zero, it implies that $L(f,\ell s+k)$ is non-vanishing in the strip $\frac{1}{2\ell} - \frac{k}{\ell} < \Re(s) < \frac{1-k}{\ell} $,  which in turn implies that $L(f,s)$ does not possess any zero in the strip $\frac{1}{2} < \Re(s) < 1$. 
From the symmetry of the functional equation, one can conclude that $L(f,s)$ is non-vanishing in $0 < \Re(s) < \frac{1}{2}$.  This proves that all the non-trivial zeros of $L(f,s)$ lies on the critical line $\Re(s) = \frac{1}{2}$. 
Thus, the GRH follows.  This completes the proof of Conjecture \ref{conjecture}.  
\end{proof}

%
%
%
%
%
%
%
%

\begin{proof}[Theorem \ref{Equivalent criteria for RH by sigma}][]
For the direct part, we assume RH for $\zeta(s)$ to be true and   we wish to obtain the bound \eqref{bound for eq criteria for sigma} for $P_{k,\sigma,\ell}(x)$. The proof goes along the similar line of reasoning as in the proof of Theorem \ref{conjecture is true} except for that fact that we start with 
\begin{align*}
 \sum_{n \leq x} \sigma_{r}^{-1}(n) = O(x^{\frac{1}{2}+ r + \epsilon}),
\end{align*}
as established in Lemma \ref{mertens bound for sigma} under the assumption of RH. Thus, we omit the proof here for the direct part.\\
For the converse part, assuming the validity of the bound \eqref{bound for eq criteria for sigma} for $P_{k,\sigma,\ell}(x)$,  that is, 
 \begin{align*}
P_{k,\sigma,\ell}(x) := \sum_{n=1}^{\infty}\frac{\sigma_r^{-1}(n)}{n^k} \exp\left(-\frac{x}{n^{\ell}}\right) = O_{\epsilon, k,r, \ell}\left(x^{-\frac{k}{\ell} +\frac{1+2r}{2 \ell}+\epsilon}\right),
\end{align*}
for any $\epsilon > 0$, 
we will try to show that all the non-trivial zeros of $\zeta(s)$ are located on the critical line $\Re(s) = \frac{1}{2}$. Here, we utilize Lemma \ref{main lemma for sigma bound} to deduce that
\begin{align}\label{identity for extending region for sigma}
\zeta(k+\ell s)\zeta(k+\ell s-r)\int_{0}^{\infty}x^{-s-1}P_{k,\sigma,\ell}(x) \mathrm{d}x = \Gamma(-s), 
\end{align}
holds within the region $\Re(s) > \frac{1-k+r}{\ell}$ except for $s = 0,1,2 \cdots$. Continuing along the path drawn in the proof of Theorem \ref{conjecture is true},  one can extend the region of analyticity of \eqref{identity for extending region for sigma} in  $\frac{1}{2 \ell} - \frac{k}{\ell} + \frac{r}{\ell} < \Re(s) < \frac{1-k+r}{\ell}.$  Considering the fact that $\Gamma(s)$ never attains zero, it implies that $\zeta(k+\ell s)$ and $\zeta(k+\ell s-r)$ is non-vanishing in the strip $\frac{1}{2 \ell} - \frac{k}{\ell} + \frac{r}{\ell} < \Re(s) < \frac{1-k+r}{\ell}.$ Since $r$ is a non-negative integer,  so $\Re(k+ \ell s) > 1$ and $\zeta(k+\ell s)$ is non-vanishing in the associated region and as a result, we are unable to derive any useful information. However,  the factor $\zeta(k+\ell s-r)$ should also be non-vanishing in the region $\frac{1}{2} < \Re(k+\ell s-r) <1$,  which in turn implies that $\zeta(s)$ does not possess any zeros within the strip $\frac{1}{2} < \Re(s) < 1$ and thus the functional equation for $\zeta(s)$ implies that it is non-vanishing in the strip $0< \Re(s) < \frac{1}{2} $. This proves that all the non-trivial zeros of $\zeta(s)$ must lie on the line $\Re(s) = \frac{1}{2} $.
\end{proof}

\section{Numerical evidences for Corollary \ref{corolary for r_2(n)} and Theorem \ref{corollary for zeta(s) zeta(s-r)} }
We used Mathematica software to make the following tables.  To obtain this data,  we considered the left-hand side sum over $n$ with $200$ terms and right-hand side sum over $n$ with $200000$ terms. 

{\allowdisplaybreaks
\begin{table}[h]

\caption{Verification of Corollary \ref{corolary for r_2(n)} }\label{Table for r_2(n)}
\renewcommand{\arraystretch}{1}
{\begin{tabular}{|l|l|l|l|}
\hline
$k$ & $x$ &  Left-hand side  & Right-hand side  \\ 
\hline
 $	2$&  $e+1$ &  $-0.0577422$ &$-0.0577021$\\      
\hline
$3$&$\pi+1$& $-0.0554663$ & $-0.0554081$  \\
\hline
$	6$	&$\pi^2$ & $-0.0000785321$&$-0.0000787028$ \\
\hline
$7$	& $e^{3}$&$-8.02103 \times 10^{-7}$ &$-8.02821 \times 10^{-7}$ \\
\hline
$11$& $e^2+\pi$&   $ 0.000024177$  & $0.00002415$ \\
\hline
\end{tabular}}
\end{table}

\begin{table}[h]
\caption{Verification of Theorem \ref{corollary for zeta(s) zeta(s-r)} }\label{Table of main theorem}
\renewcommand{\arraystretch}{1}
{\begin{tabular}{|l|l|l|l|l|}
\hline
$k$ & $r$ & $x$ &  Left-hand side  & Right-hand side  \\ 
\hline
 $	8$& $5$& $e$ &  $0.0147028$ &$0.0147079$\\      
\hline
$4$&$1 $&$\pi$& $-0.0103086$ & $-0.0103073$  \\
\hline
$	10$	&$1$ &$\pi +1$ & $0.0155115$&$0.0155119$ \\
\hline
$11$	&$5$ & $e+1$&$0.0213461$ &$0.0213510$ \\
\hline
$15$& $7$& $\pi^2$&   $0.0000174587$  & $0.0000174586$ \\
\hline
\end{tabular}}
\end{table}}
\hspace{1cm}

\section{Acknowledgement}  The first author's research is funded by the Prime Minister Research Fellowship,  Govt. of India,   Grant No.  2101705.  The last author wants to thank Science and Engineering Research Board (SERB), India, for giving MATRICS grant (File No. MTR/2022/000545) and SERB CRG grant (File No. CRG/2023/002122). Both authors sincerely thank IIT Indore for providing conductive research environment.

\end{document}